\theoremstyle{plain}
\newtheorem{thm}{Theorem}[section]
\newcommand{\bt}{\begin{thm}}
\newcommand{\et}{\end{thm}}
\newtheorem{cor}[thm]{Corollary}   
\newcommand{\bc}{\begin{cor}}
\newcommand{\ec}{\end{cor}}
\newtheorem{lem}[thm]{Lemma}   
\newcommand{\bl}{\begin{lem}}
\newcommand{\el}{\end{lem}}
\newtheorem{prop}[thm]{Proposition}
\newcommand{\bp}{\begin{prop}}
\newcommand{\ep}{\end{prop}}
\newtheorem{defn}[thm]{Definition}
\newtheorem{conj}[thm]{Conjecture}
\newcommand{\bd}{\begin{defn}}    
\newcommand{\ed}{\end{defn}}
\newtheorem{rmrk}[thm]{Remark}   
\newcommand{\br}{\begin{rmrk}}
\newcommand{\er}{\end{rmrk}}
\newtheorem{example}[thm]{Example}
\newcommand{\VFto}{\stackrel {\mathcal{VF}}{\longrightarrow} }
\newcommand{\be}{\begin{equation}}
\newcommand{\ee}{\end{equation}}
\newcommand{\R}{\mathbb{R}}
\newcommand{\diam}{\operatorname{diam}}
\newcommand{\area}{\operatorname{Area}}
\newcommand{\vol}{\operatorname{Vol}}
\def\rmin{r_{\mathrm{min}}}
\def\implies{\Longrightarrow}
\def\rmin{r_{\textrm{min}}}
\numberwithin{equation}{section}
\begin{document}

\title[Stability of the Spacetime Positive Mass Theorem]{Stability of the Spacetime Positive Mass Theorem in Spherical Symmetry}



\author[Bryden]{Edward Bryden}
\author[Khuri]{Marcus Khuri}
\address{Department of Mathematics\\
Stony Brook University\\
Stony Brook, NY 11794, USA}
\email{edward.bryden@stonybrook.edu, khuri@math.sunysb.edu}

\author[Sormani]{Christina Sormani}
\address{CUNY Graduate Center and Lehman College\\
New York, NY 10016, USA}
\email{etbryden@gmail.com, sormanic@gmail.com}

\thanks{E. Bryden acknowledges the support of NSF Grant DMS-1612049. M. Khuri acknowledges the support of NSF Grant DMS-1708798. C. Sormani acknowledges the support of NSF Grant DMS-1612049.}

\date{}

\keywords{}

\vspace{.2cm}

\begin{abstract}
The rigidity statement of the positive mass theorem asserts that an asymptotically flat initial data set for the Einstein equations with zero ADM mass, and satisfying the dominant energy condition, must arise from an embedding into Minkowski space.
In this paper we address the question of what happens when the mass is merely small. In particular, we formulate a conjecture for the stability
statement associated with the spacetime version of the positive mass theorem, and give examples to show how it is basically sharp if true. This conjecture is then established under the assumption of spherical symmetry in all dimensions. More precisely, it is shown that a sequence of asymptotically flat initial data satisfying the dominant energy condition, without horizons except possibly at an inner boundary, and with ADM masses tending to zero must arise from isometric embeddings into a sequence of static spacetimes converging to Minkowski space in the pointed volume preserving intrinsic flat sense. The difference of second fundamental forms coming from the embeddings and initial data must converge to zero in $L^p$, $1\leq p<2$. In addition some minor tangential results are also given, including the spacetime version of the Penrose inequality with rigidity statement in all dimensions for spherically symmetric initial data, as well as symmetry inheritance properties for outermost apparent horizons.
\end{abstract}

\maketitle

\noindent

\vspace{.5cm}


\section{Introduction}
\label{sec1} \setcounter{equation}{0}
\setcounter{section}{1}

Let $(M^n,g,k)$ be an initial data set for the Einstein equations. This means that
$(M^n,g)$ is a complete Riemannian manifold, possibly with boundary, and $k$ is a symmetric 2-tensor representing the second fundamental form of an embedding into spacetime. These satisfy the constraint equations
\be
	16 \pi \mu=R_g+(\text{Tr}_gk)^2-|k|_g^2, \qquad
	8\pi J=\text{div}_g\left(k-(\text{Tr}_g k)g\right),
\ee
where $\mu$ and $J$ are the energy and momentum density of the matter fields, and $R_g$ denotes scalar curvature. The dominant energy condition is satisfied if
\begin{equation}
	\mu\ge |J|_g.
\end{equation}
We will say that the initial data are \textit{asymptotically flat}
if there is an asymptotic end in the manifold $M^n$ that is diffeomorphic to the complement of a ball $\mathbb{R}^n\setminus B_{0}(\rho_0)$, and there exists a constant $C$ such that in the coordinates $x$ provided by this asymptotic diffeomorphism
\be\label{defn-asym-flat}
\left|\partial^{\beta_1}(g_{ij}-\delta_{ij})\right|\leq\frac{C}{|x|^{n-2+|\beta_1|}},  \qquad\quad
 \left|\partial^{\beta_2} k_{ij}\right|\le \frac{C}{|x|^{n-1+|\beta_2|}},
\ee
for multi-indices $\beta_1\leq2$, $\beta_2\leq 1$ and
\begin{equation}\label{[[[[}
\left|R_{g}\right|\leq\frac{C}{|x|^{n+1}},\qquad\quad
\left|\text{Tr}_g k\right|\leq\frac{C}{|x|^{n}}.
\end{equation}
These fall-off conditions are modeled on those of the original Schoen-Yau proof of the positive mass theorem \cite{SchoenYauII}. We believe our results should follow assuming the weaker asymptotic decay as in the work of Eichmair, Huang, Lee and Schoen \cite{Eichmair,EichmairHuangLeeSchoen}; however, for the sake of simplicity of exposition this will not be done.

With the above setting, the ADM energy and linear momentum of the asymptotic end are finite, well-defined, and given by
\begin{equation}
E=\frac{1}{2(n-1)\omega_{n-1}}\lim_{r\rightarrow\infty}
\int_{S_{r}}(g_{ij,i}-g_{ii,j})\nu^{j},
\end{equation}
\begin{equation}
P_i=\frac{1}{2(n-1)\omega_{n-1}}\lim_{r\rightarrow\infty}
\int_{S_{r}}(k_{ij}-(\text{Tr}_g k)g_{ij})\nu^{j},
\end{equation}
where $S_r$ are coordinate spheres with unit outer normal $\nu$ and $\omega_{n-1}$ is the volume of the standard sphere ${\mathbb S}^{n-1}$.
The ADM mass is then the Lorentz length of the energy-momentum 4-vector
\be
m=\sqrt{E^2-|P|^2}.
\ee
In this paper the main results will be concerned with spherically symmetric initial data. It turns out that in spherical symmetry, under the definition of asymptotic flatness in \eqref{defn-asym-flat} and \eqref{[[[[}, the linear momentum vanishes $|P|=0$ and hence $m=E$ as is shown in Proposition \ref{nbvc}.

The positive mass inequality asserts that an asymptotically flat complete initial data set satisfying the dominant energy condition has
\be
E\geq|P|.
\ee
This was established by Eichmair, Huang, Lee, and Schoen in \cite{EichmairHuangLeeSchoen} for dimensions $3\leq n\leq7$ by using stable marginally outer trapped surfaces (MOTS) in analogy with the minimal hypersurface technique deployed in the time-symmetric case, and in all dimensions $n\geq 3$ for spin manifolds by Bartnik \cite{Bartnik} and Witten \cite{Witten} (see also work of Parker and Taubes \cite{ParkerTaubes}).  Earlier, the weaker inequality $E\geq 0$ was initially proven by Schoen and Yau \cite{SchoenYauII} when $n=3$ with the help of Jang's equation, and this reduction argument was later extended by Eichmair \cite{Eichmair} to include dimensions $3\leq n\leq 7$.

The rigidity of the positive mass theorem may be broken into two statements. The first asserts:
\be
E=|P| \quad \implies \quad E=|P|=0.
\ee
This was proven by Huang and Lee \cite{HuangLee} for $3\leq n\leq 7$. Their approach only uses the positive mass inequality as input but not its proof, and thus can be extended to higher dimensions for spin manifolds. The spin case was previously treated by Beig and Chrusciel \cite{BeigChrusciel} for $n=3$ and Chrusciel and Maerten \cite{ChruscielMaerten} for higher dimensions. The second statement is that
\be
E=0 \implies (M^n,g,k) \textrm{ embeds as initial data in Minkowski space}.
\ee
As with the inequality, this was originally established by Schoen and Yau in \cite{SchoenYauII} for three dimensions and extended by Eichmair in \cite{Eichmair} to dimensions less than eight. Finally in the spin case this was treated for all dimensions in work of Beig, Chrusciel, and Maerten \cite{BeigChrusciel,ChruscielMaerten}. Here we state the positive mass rigidity theorem in a particular way that allows us to propose a natural almost rigidity (or stability) conjecture.

\begin{thm}[Positive Mass Rigidity Theorem \cite{Eichmair,HuangLee,SchoenYauII}]  \label{rigidity-thm} 
Let $(M^n,g,k)$ be a complete asymptotically flat initial data set, with $3\leq n\leq 7$, and satisfying the dominant energy condition. If the ADM mass vanishes $m=0$, then $M^n$ is diffeomorphic to $\mathbb{R}^n$ and
$(M^n,g)$ can be isometrically embedded as a graph in Minkowski space. That is
\be
F:(M^n, g) \to (\mathbb{R}^{1,n}, -dt^2+g_{\mathbb E}), \qquad  F(x)=(f(x),x),
\ee
where $g_{\mathbb{E}}$ is the Euclidean metric and
\be
g= F^*( -dt^2 + {g_{\mathbb E}})=-df^2 + g_{\mathbb E},
\ee
and the second fundamental form, $h$, of the embedding agrees with that of the initial data
\be
h=k.
\ee
\end{thm}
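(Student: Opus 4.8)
The plan is to assemble the theorem from the three cited ingredients — the positive mass inequality, the rigidity of its equality case, and the sharp zero-energy rigidity obtained via Jang's equation — and then to track the equality cases carefully. \emph{Reduction to zero energy.} Since $m=\sqrt{E^2-|P|^2}=0$ we have $E=|P|$, so the positive mass inequality of Eichmair--Huang--Lee--Schoen \cite{EichmairHuangLeeSchoen} is saturated; by the rigidity of the equality case, due to Huang--Lee \cite{HuangLee}, this forces $E=|P|=0$, and it remains to treat the case $E=0$.

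\emph{Jang's equation and the Riemannian reduction.} Following Schoen--Yau \cite{SchoenYauII} for $n=3$ and Eichmair \cite{Eichmair} for $4\le n\le 7$, one first checks that $E=0$ excludes marginally outer trapped surfaces in $M^n$: the presence of one would, after solving Jang's equation in its exterior and capping off, produce an asymptotically flat manifold of nonnegative scalar curvature with a closed minimal boundary and ADM energy still equal to $0$, contradicting the Riemannian positive mass theorem. Hence Jang's equation admits a global solution $f$ on $M^n$, whose graph $\overline M=\{(x,f(x))\}\subset(M^n\times\R,\,g+dt^2)$ is asymptotically flat with induced metric $\overline g=g+df\otimes df$ and ADM mass equal to $E$. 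The Schoen--Yau identity reads
\be
\overline R=16\pi\bigl(\mu-J(V)\bigr)+|A-k|_{\overline g}^2+2|X|_{\overline g}^2-2\operatorname{div}_{\overline g}X,
\ee
where $A$ is the second fundamental form of $\overline M$ in $M^n\times\R$, $k$ is pulled back to $\overline M$, $V$ is a unit vector field along $\overline M$, and $X$ is an explicit vector field built from $f$ and $k$ whose divergence integrates to zero; the dominant energy condition gives $\mu-J(V)\ge 0$. A conformal deformation that absorbs the divergence term (as in \cite{SchoenYauII}) yields an asymptotically flat metric on $\overline M$ with nonnegative scalar curvature whose ADM mass is at most $E=0$.

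\emph{Equality analysis.} The Riemannian positive mass theorem and its rigidity case then force this deformed metric, and hence $(\overline M,\overline g)$ itself, to be isometric to Euclidean $(\R^n,g_{\mathbb{E}})$; since the projection $\overline M\to M^n$ is a diffeomorphism, $M^n$ is diffeomorphic to $\R^n$. Fixing an isometry $\Phi\colon(M^n,g+df\otimes df)\to(\R^n,g_{\mathbb{E}})$, the map $F(x)=(f(x),\Phi(x))$ into Minkowski space satisfies $F^*(-dt^2+g_{\mathbb{E}})=-df\otimes df+\Phi^*g_{\mathbb{E}}=g$, so $(M^n,g)$ embeds isometrically as a graph, as claimed. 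Finally, equality throughout the identity above forces $\mu=|J|=0$ and $X\equiv 0$, and pins down the second fundamental form of this Minkowski graph to be exactly $k$, so that $h=k$.

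\emph{Main difficulty.} The genuinely delicate points lie in the last two steps: verifying that $E=0$ really does prevent blow-up of Jang's equation, carrying out the conformal deformation needed to discard the divergence term so that the Riemannian rigidity applies, and — above all — extracting from the equality cases ($X\equiv 0$, $A=k$) the conclusion that the same function $f$ produces the isometric Minkowski graph carrying the prescribed second fundamental form. This final identification is precisely where the work of Schoen--Yau and Eichmair is concentrated.
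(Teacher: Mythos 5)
The paper does not actually prove Theorem~\ref{rigidity-thm}: it is stated as a citation of Schoen--Yau \cite{SchoenYauII}, Eichmair \cite{Eichmair}, and Huang--Lee \cite{HuangLee}, and no internal argument is offered. So there is no proof in the paper to compare against; what one can assess is whether your reconstruction of the cited literature is accurate.

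Your three-stage outline is the correct one: reduce $m=0$ to $E=|P|=0$ via Huang--Lee, then run the Jang-equation plus conformal-change machinery of Schoen--Yau/Eichmair to reduce to the Riemannian rigidity, and finally read off the isometric Minkowski embedding with $h=k$ from the vanishing of the nonnegative terms in the scalar-curvature identity \eqref{Jangscalar}. The Minkowski-graph step is also right: once $\bar g=g+df^2$ is Euclidean, $F(x)=(f(x),\Phi(x))$ pulls $-dt^2+g_{\mathbb E}$ back to $g$, and $A=k$ together with the identification of the Riemannian-product second fundamental form of the Jang graph with the Lorentzian graph second fundamental form (as in Section~\ref{sec4}) gives $h=k$.

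The one place where your summary does not match what the cited proofs actually do is the MOTS-exclusion step. You claim that a MOTS would, ``after solving Jang's equation in its exterior and capping off,'' yield a manifold with a \emph{closed minimal boundary} and zero mass, contradicting the Riemannian positive mass theorem. That is not the mechanism in \cite{SchoenYauII,Eichmair}. In those arguments the Jang solution blows up at the MOTS and produces an asymptotically cylindrical end, which the conformal factor $u\sim e^{-\gamma s}$ \emph{closes up metrically to a point} (a removable conical end after careful analysis); there is no minimal boundary. The contradiction is then that the resulting complete zero-mass manifold must be Euclidean $\mathbb R^n$, which is inconsistent with the presence of a compactified cylindrical end and with $u\to 0$ there while $u\to 1$ at infinity. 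Your version as stated would require the Riemannian Penrose inequality (or a PMT with minimal boundary, obtained e.g.\ by doubling), which is a valid route but not the Schoen--Yau/Eichmair argument you attribute it to, and the ``capping off'' you invoke is never performed. Aside from that attribution/mechanism slip, the proposal is a faithful and essentially complete reconstruction.
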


The purpose of this paper is to establish an almost rigidity or stability version of this theorem in the spherically symmetric setting. We will say that the initial data are \textit{spherically symmetric} if $M^n$ is diffeomorphic to $\mathbb{R}^n\setminus B_{0}(r_0)$ or $\mathbb{R}^n$ and the metric and second fundamental form may be
expressed by
\be\label{g-sph-sym}
g=g_{11}(r)dr^2+\rho(r)^2 g_{S^{n-1}},\quad\quad\quad\quad k_{ij}=n_i n_j k_n(r)+(g_{ij}-n_i n_j)k_t(r),
\ee
for some radial functions $g_{11}$, $\rho$, $k_n$, and $k_t$, where $n=\sqrt{g^{11}}\partial_r$ is the unit normal to coordinate spheres. This decomposition for $k$ exhibits its normal and tangential components with respect to the coordinate spheres, and is motivated by the implicit assumption that the initial data come from a spherically symmetric spacetime in which $k$ is the `time derivative' of $g$ which already has this structure.

The boundary, if nonempty, of the initial data will consist of apparent horizons.
Recall that the strength of the gravitational field
around a hypersurface $\Sigma\subset M^n$ may be measured by the null expansions (null mean curvatures) given by
\begin{equation}
\theta_{\pm}:=H_{\Sigma}\pm Tr_{\Sigma}k,
\end{equation}
where $H_{\Sigma}$ is the mean curvature with respect to the unit
normal pointing towards spatial infinity. These quantities can be
interpreted as the rate at which the area of a shell of light changes
as it moves away from the surface in the outward future/past direction ($+$/$-$).
Future or past trapped surfaces are defined by the inequalities $\theta_{+}< 0$ or $\theta_{-}< 0$, respectively, and may be thought of as lying in a region of strong gravity. If $\theta_{+}=0$ or $\theta_{-}=0$, then $\Sigma$ is called a future or past apparent horizon; these naturally arise
as boundaries of future or past trapped regions. Furthermore, such surfaces will be referred to as an \textit{outermost apparent horizon} if it is not enclosed by any other apparent horizon. In Lemma~\ref{sphericalsymmetricAH} it is shown that the outermost apparent horizon inherits the symmetry of its ambient space. In this text the abbreviated term \textit{horizon} will often be used for these objects.

We will consider asymptotically flat $(M^n,g,k)$ that have either no horizons or only a horizon on an inner boundary, in which case the boundary is an outermost apparent horizon. Under these conditions for spherically symmetric initial data, it is shown in
Lemma~\ref{Monotonicity} that the areas (or $n-1$ dimensional volumes) of the level sets of $\rho$ are increasing. Thus we may define the level set
\be \label{Sigma-A}
 \Sigma_A= \rho^{-1}(\rho_A) \textrm{ such that }
 \vol_{g}(\Sigma_A)=A=\omega_{n-1}\rho_A^{n-1}.
\ee
We will study regions within and between these level sets
\be \label{Omega-A}
\Omega_A=\rho^{-1}[0, \rho_A], \quad\qquad \Omega_{A_1,A_2}=\rho^{-1}[\rho_{A_1}, \rho_{A_2}],
\ee
as well as their tubular neighborhoods
\be
T_D(\Sigma_A)= \{p\in M^n \mid\, \exists q\in \Sigma_A \,\,with\,\, d_g(p,q)<D\},
\ee
where $d_g(p,q)$ denotes the distance between $p$ and $q$.

In order to state the main theorem we need the notion of {\em uniform asymptotic flatness}. A sequence of initial data $(M^n_j,g_j,k_j)$ will be referred to as \textit{uniformly asymptotically flat} if each member of the sequence is asymptotically flat according to \eqref{defn-asym-flat} and \eqref{[[[[},
and the constants $\rho_0$ and $C$ in the definition are independent of $j$.

\begin{thm}\label{main-thm}
Fix $A>0$ and $D>\rho_A$, and consider
a sequence of uniformly asymptoticaly flat spherically symmetric initial data sets $(M_j^n, g_j, k_j)$ satisfying the dominant energy condition and with
no closed horizons except possibly the inner boundary. If their ADM masses converge to zero $m_j\rightarrow 0$, then there exist Riemannian manifolds $(\bar{M}^n_j,\bar{g}_j)$ diffeomorphic to $(M^n_j,g_j)$ with graphical isometric embeddings
\be\label{main-thm-graphs}
F_j:(M^n_j, g_j) \to ({\mathbb{R}}\times \bar{M}^n_j, -dt^2+\bar{g}_j), \qquad  F_j(x)=(f_j(x),x),
\ee
\be
g_j = F_j^*( -dt^2 + \bar{g}_j) =-df_j^2 +\bar{g}_j,
\ee
such that the static spacetimes
\be
({\mathbb R}\times \bar{M}^n_j, -dt^2+\bar{g}_j)) \textrm{ converge to Minkowski space }(\mathbb{R}^{1,n} , -dt^2+ g_{\mathbb E})
\ee
in that the base manifolds converge in the pointed volume preserving intrinsic flat sense to Euclidean space. More precisely, regions within $\Sigma_A$ in $(\bar{M}^n_j, \bar{g}_j)$ converge to balls in Euclidean space
\be \label{main-thm-VF}
\left(\,\Omega^j_A \cap T_D(\Sigma^j_A)\, ,\,\bar{g}_j\,\right)\,\, \VFto\,\,
 \left(\,B_0(\rho_A)\,,^{\textcolor{white}{1}} {g_{\mathbb E}}\,\right).
\ee
Furthermore if there is a uniform constant $C$ such that $\parallel k_j\parallel_{L^2(M^n_j)}\leq C$, then for any $1\leq p<2$ the second fundamental forms $h_j$ of the graphs satisfy
\be \label{main-thm-kL2}
\parallel h_j - k_j\parallel_{ L^p \left( \Omega^j_A\cap T_D(\Sigma_A^j), \bar{g}_j\right)} \to 0.
\ee
\end{thm}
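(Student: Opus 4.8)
The plan is to reduce everything to an explicit analysis of the spherically symmetric ODE system, exploiting that in spherical symmetry the ADM mass equals the Hawking mass of the outermost level set and is monotone. First I would introduce the area radius $\rho$ as the coordinate (legitimate by Lemma~\ref{Monotonicity}), and rewrite the metric as $g = (1 - 2m_H(\rho)/\rho^{n-2})^{-1}\,d\rho^2 + \rho^2 g_{S^{n-1}}$ modulo the contribution of $k$, where $m_H$ is a quasi-local Hawking-type mass. The constraint equations together with the dominant energy condition $\mu \geq |J|_g$ should give a monotonicity statement: $m_H(\rho)$ is nondecreasing in $\rho$ (this is the spherically symmetric Penrose/positive mass mechanism, presumably the content of an earlier lemma or of Proposition~\ref{nbvc}), and $0 \leq m_H(\rho) \leq m_j$ on the region $\Omega_A^j$. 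Since $m_j \to 0$, the ``potential'' $1 - 2m_H(\rho)/\rho^{n-2}$ converges uniformly to $1$ on $\rho \in [\rho_A/2, \rho_A]$ away from $\rho = 0$, so the metric coefficient $g_{11}$ converges uniformly to the Euclidean one on annular regions; near the center (or near the inner horizon boundary) one argues separately using that the enclosed region has small volume.

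Next I would construct the static spacetime and the graph function. The natural choice is to take $\bar g_j$ to be the spatial Schwarzschild-type metric with mass $m_j$ (or with the mass profile $m_H$), so that $({\mathbb R}\times \bar M_j^n, -dt^2 + \bar g_j)$ is an honest static spacetime. The graph function $f_j$ is then obtained by integrating the defining relation $g_j = -df_j^2 + \bar g_j$: comparing the given $g_{11}^j$ with $\bar g_{11}^j$ and the common angular part $\rho^2 g_{S^{n-1}}$ forces $(f_j')^2 = \bar g_{11}^j - g_{11}^j$ as functions of $\rho$, and I would need to check this right-hand side is nonnegative (or absorb the sign by choosing $\bar g_j$ appropriately, e.g. Schwarzschild with mass equal to $\sup_\rho m_H$ or a hyperboloidal correction) and that the resulting $f_j$ is Lipschitz with the correct decay. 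The second fundamental form $h_j$ of this graph is then computed explicitly from $f_j$ in terms of $\Hess f_j$ and the lapse, and compared termwise with $k_j = n_in_j k_n + (g_{ij}-n_in_j)k_t$: the constraint equations express $k_n, k_t$ in terms of the same mass profile and its derivative, so $h_j - k_j$ is controlled by quantities that vanish as $m_j \to 0$, giving the $L^p$ convergence in \eqref{main-thm-kL2} for $p<2$ (the restriction $p<2$ arising because the natural bound on $|h_j - k_j|$ behaves like $\rho^{-1}$-type near the center or like $m_j^{1/2}\rho^{-(n-1)/2}$, which is in $L^p$ for $p<2$ but not $L^2$, matching the hypothesis $\|k_j\|_{L^2} \leq C$).

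For the intrinsic flat convergence \eqref{main-thm-VF}, I would use the standard strategy (as in Lakzian--Sormani and Huang--Lee--Sormani type arguments): since $\bar g_{11}^j \to 1$ uniformly on annuli and the angular parts agree exactly with the Euclidean ones, one builds a common metric space — for instance a ``filling'' or a bi-Lipschitz-controlled hairy ball — in which both $(\Omega_A^j \cap T_D(\Sigma_A^j), \bar g_j)$ and $(B_0(\rho_A), g_{\mathbb E})$ embed with small Hausdorff distance, and then invokes the general estimate bounding intrinsic flat distance by Hausdorff distance plus the volumes of the regions where the embeddings fail to be isometric. The volume-preserving part follows because the areas $\vol_{\bar g_j}(\Sigma_\rho) = \omega_{n-1}\rho^{n-1}$ are fixed and $g_{11}^j \to 1$ gives $\vol_{\bar g_j}(\Omega_A^j) \to \vol(B_0(\rho_A))$. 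The tubular neighborhood intersection $\cap T_D(\Sigma_A^j)$ is what makes the pointed statement work when there is an inner boundary pushed off to large distance, and handling the possible inner horizon boundary — showing it contributes negligible volume and does not create thin tentacles that inflate the intrinsic flat distance — is where I expect the main technical obstacle to lie. Concretely, the danger is a long thin ``neck'' near an almost-horizon where $g_{11}^j$ blows up; one must show that the dominant energy condition plus $m_j \to 0$ forces any such neck to have small area and to be excised by the $T_D$ truncation, so that the region that actually remains is uniformly close to Euclidean. Combining the uniform convergence of the metric coefficients, the volume convergence, and the neck control then yields \eqref{main-thm-VF}.
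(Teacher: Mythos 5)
Your proposal never invokes Jang's equation, and this is not a matter of taste: without it the argument cannot work, and several of the specific claims you make are contradicted by examples in the paper itself.

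The decisive gap is that $k_j$ never enters your construction of $\bar g_j$ or $f_j$. You take $\bar g_j$ to be a Schwarzschild-type metric built from $g_j$ alone and then solve $(f_j')^2=\bar g_{11}^j-g_{11}^j$; the resulting $h_j$ is a function of $g_j$ and $m_j$, completely independent of $k_j$. Your claim that ``the constraint equations express $k_n,k_t$ in terms of the same mass profile and its derivative'' is false: the constraints are underdetermined, and given $g_j$ one can vary $k_j$ over a large family (subject only to the dominant energy condition and decay) without changing $g_j$, $m_j$, or your $h_j$. For such variations $h_j-k_j$ does not tend to zero. The paper's mechanism is precisely designed to fix this: $f_j$ is chosen to solve Jang's equation $\mathrm{Tr}_{\bar g_j}(h_j-k_j)=0$, which couples $f_j$ to $k_j$; the Schoen--Yau scalar curvature identity \eqref{Jangscalar} and the resulting stability inequality \eqref{stability} then make $\|h_j-k_j\|^2_{L^2}$ appear on the right side of a quantity controlled by the mass, which is what gives Proposition~\ref{2ndFundFormOutside} and ultimately \eqref{main-thm-kL2}.

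A second gap concerns your intermediate claim that the coefficient $g_{11}^j$ of the given data converges uniformly to the Euclidean one on annuli because ``$m_H(\rho)\leq m_j$''. The quantity you are writing down is the \emph{Riemannian} Hawking mass of $g_j$, and without time symmetry it is \emph{not} monotone under the dominant energy condition; the monotone quantity is the spacetime Hawking/Misner--Sharp mass, which involves $H^2-(\mathrm{Tr}_\Sigma k)^2$ and controls the \emph{Jang} metric $\bar g_j=g_j+df_j^2$, not $g_j$. Indeed Example~\ref{ex-deep-well} of the paper has $g_{11}^j$ unbounded (deep well) and Example~\ref{ex-to-null} has $g_{11}^j\to 0$ on a whole annulus (nearly null data), both with $m_j\to 0$. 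These examples also defeat your definition of $f_j$: with $\bar g_j$ Schwarzschild of mass $m_j\to 0$, $\bar g_{11}^j\to 1$ while $g_{11}^j$ may exceed it, so $\bar g_{11}^j-g_{11}^j<0$ and the proposed ODE for $f_j$ has no real solution. This cannot be patched by enlarging the mass of the Schwarzschild comparison metric, since the depth of the well is not controlled by $m_j$. The paper's approach instead obtains the intrinsic flat convergence by controlling the conformal factor $u_j$ relating $\bar g_j$ to the flat metric $\tilde g_j=u_j^{4/(n-2)}\bar g_j=g_{\mathbb E}$ (Corollary~\ref{poiuy}, Proposition~\ref{C0-outside}), which sidesteps any direct estimate on $g_{11}^j$, and then applies the Lakzian--Sormani estimate to $\bar g_j$ versus $g_{\mathbb E}$ on annuli while cutting out the small-volume inner region. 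Your high-level plan for the intrinsic flat step (identify diffeomorphic annular subregions where the metrics are $C^0$-close, control leftover volumes, invoke a Lakzian--Sormani-type bound) does match the paper's strategy, but the object to which it is applied must be the Jang metric $\bar g_j$, not $g_j$, and the $C^0$ closeness must come from the stability-inequality control of $u_j$ rather than from any monotonicity of the Riemannian Hawking mass of $g_j$.
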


The intrinsic flat distance $d_{\mathcal{F}}(\Omega_j, \Omega'_j)$ between pairs of
compact oriented Riemannian manifolds with boundary was first introduced by the third author with Wenger in \cite{SW-JDG}.  Intuitively it measures the filling volume between the given manifolds. It is $0$ if and only if there is an orientation preserving isometry between the manifolds $\Omega_j$ and $\Omega'_j$ \cite{SW-JDG}.
The volume preserving intrinsic flat distance was introduced in \cite{Sormani-Scalar} and includes an extra term involving the global difference of volumes
\be
d_{\mathcal{VF}}(\Omega_j, \Omega'_j)
= d_{\mathcal{F}}(\Omega_j, \Omega'_j) +  |\vol_j(\Omega_j)-\vol_\infty(\Omega'_j)|.
\ee
This has been studied by Portegies in \cite{Portegies-evalues} and by
Jauregui-Lee in \cite{Jauregui-Lee}.  In particular they have shown that
\be
d_{\mathcal{VF}}(\Omega_j, \Omega_\infty)\to 0 \quad\quad\implies\quad\quad \vol(B_{p_j}(r)) \to \vol(B_{p_\infty}(r)),
\ee
for sequences of points $p_j\in \Omega_j$ converging to $p_\infty\in \Omega_\infty$.

Theorem~\ref{main-thm} has been proven for time-symmetric initial data sets by Lee and the third author
in \cite{LeeSormani1}.  In that setting $k_j=0$ and $f_j$ can be taken to be  constant so that $h_j=0$ and
(\ref{main-thm-kL2}) follows trivially. The intrinsic flat convergence is proven in \cite{LeeSormani1} by constructing
an explicit filling manifold.  In fact, LeFloch and the third author have proven in \cite{LeFloch-Sormani} that the metric tensors converge in the
$H^1_{loc}$ sense.
Note that examples in \cite{LeeSormani1} demonstrate that even in the spherically symmetric time-symmetric setting one can have
sequences with ADM mass converging to $0$ which do not converge to regions in Euclidean space in the
smooth or Gromov-Hausdorff sense.
Applying techniques from \cite{LeeSormani1} in our Example~\ref{ex-deep-well}, it is shown why one needs a tubular neighborhood in \eqref{main-thm-VF}. Furthermore, in Example~\ref{ex-two-sheets} and Example~\ref{ex-bubble} we demonstrate the need to assume that there are no interior horizons.

Without time symmetry, when $k_j \neq 0$, Theorem \ref{main-thm} makes no claim as to the convergence of the original sequence of
Riemannian manifolds $(M_j^n, g_j)$.  Example~\ref{ex-to-null} illustrates why
the initial sequence $(M_j^n, g_j)$ need not converge in any reasonable sense.
There we construct sequences of initial data sets of zero mass lying in Minkowski space which become increasingly null on large regions, so that volumes disappear instead of converging.

\begin{conj}\label{rmrk-conj}
Theorem~\ref{main-thm} holds without requiring spherical symmetry when suitable definitions are made for the regions $\Omega_A$.
To achieve the conclusion exactly as stated we expect that $E \to 0$ should replace $m \to 0$ in the hypotheses for the general case.   It is possible that a similar statement holds for $m \to 0$, but the approach would have to be different from the one used here in light of examples with boost.  In the outline below, we clarify which steps strongly use spherical symmetry and which hold more generally.
\end{conj}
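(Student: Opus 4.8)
The plan is to construct the graph functions $f_j$ by solving Jang's equation on each initial data set $(M^n_j,g_j,k_j)$, to let $\bar g_j:=g_j+df_j\otimes df_j$ be the associated Jang metric --- so that the graph $t=f_j(x)$ inside the static spacetime $(\mathbb{R}\times\bar{M}^n_j,\,-dt^2+\bar g_j)$ is by construction isometric to $(M^n_j,g_j)$, with second fundamental form $h_j$ --- and then to run a spherically symmetric \emph{almost}-nonnegative-scalar-curvature argument in the spirit of the time-symmetric theorem of Lee and the third author \cite{LeeSormani1}. Spherical symmetry is used decisively from the start: Jang's equation becomes a single ODE, and the Jang metric stays spherically symmetric with the \emph{same} area-radius function, $\bar g_j=(g_{11,j}+(f_j')^2)\,dr^2+\rho_j^2\,g_{S^{n-1}}$, so the level sets $\Sigma^j_A=\rho_j^{-1}(\rho_A)$ retain their areas $\omega_{n-1}\rho_A^{n-1}$ and the monotonicity of $\rho_j$ from Lemma~\ref{Monotonicity} carries over verbatim to $\bar g_j$.

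First I would establish global solvability: in spherical symmetry Jang's equation is an ODE whose only possible blow-up locus is the set of apparent horizons, so the hypothesis of no closed horizons away from the inner boundary yields a smooth solution on all of $M^n_j$, while a horizon on the inner boundary leads to a cylindrical end of $\bar g_j$ over it. Using the decay \eqref{defn-asym-flat} (together with the fall-off on $R_{g_j}$ and $\operatorname{Tr}_{g_j}k_j$), $f_j$ may be chosen so that $\bar g_j$ is asymptotically flat with $m_{\mathrm{ADM}}(\bar g_j)=E_j$, which equals $m_j$ by Proposition~\ref{nbvc} since spherical symmetry forces $|P_j|=0$; hence $m_{\mathrm{ADM}}(\bar g_j)\to 0$. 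The payoff of passing to $\bar g_j$ is the Schoen--Yau identity for the Jang metric,
\[
\bar R_j = 2\bigl(\mu_j - J_j(w_j)\bigr) + |h_j-k_j|^2_{\bar g_j} + 2|q_j|^2_{\bar g_j} - 2\,\overline{\operatorname{div}}_j(q_j),
\]
with $w_j$, $q_j$ auxiliary $1$-forms built from $df_j$ and $h_j-k_j$, where the dominant energy condition gives $\mu_j-J_j(w_j)\ge 0$ because $|w_j|_{g_j}<1$. Thus $\bar g_j$ has \emph{almost}-nonnegative scalar curvature, the sole obstruction to genuine nonnegativity being the radial, total-divergence term $-2\,\overline{\operatorname{div}}_j(q_j)$.

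Writing $\bar g_j$ in the area-radius gauge with mass function $\bar m_j(\rho)$, one has $\bar m_j(\rho)\to m_j\to 0$ as $\rho\to\infty$, while the Schoen--Yau identity bounds the negative part of $\bar m_j'$ by the divergence term and leaves its positive part only helped by the dominant energy condition; with the boundary contributions at infinity and at the inner horizon --- whose area is itself controlled by $m_j$ through the spherically symmetric spacetime Penrose inequality proved in this paper --- this forces $\sup_\rho\bar m_j(\rho)\to 0$ over the range relevant to $\Omega^j_A$. With uniform smallness of the mass function I would then adapt the Lee--Sormani construction of an explicit filling manifold to bound $d_{\mathcal F}\bigl(\Omega^j_A\cap T_D(\Sigma^j_A),\,B_0(\rho_A)\bigr)\le C\sup_\rho\bar m_j(\rho)\to 0$, and control the global volume gap to upgrade this to $d_{\mathcal{VF}}\to 0$; the tubular neighbourhood is precisely what excises the part of a deep thin well beyond depth $D$ (necessary by Example~\ref{ex-deep-well}), and the no-interior-horizon hypothesis excludes the extra sheets and bubbles of Examples~\ref{ex-two-sheets} and~\ref{ex-bubble}. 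For the last assertion, integrating the Schoen--Yau identity over $\Omega^j_A\cap T_D(\Sigma^j_A)$ and discarding the nonnegative terms $2(\mu_j-J_j(w_j))$ and $2|q_j|^2_{\bar g_j}$ bounds $\int|h_j-k_j|^2_{\bar g_j}$ by $\int\bar R_j\,dV_{\bar g_j}$ plus boundary terms; on the subregion where $\bar g_j$ is uniformly close to the Euclidean metric this tends to $0$, while on its complement --- a set of $\bar g_j$-volume tending to $0$ --- one has only the uniform $L^2$ bound from $\parallel k_j\parallel_{L^2(M^n_j)}\le C$ and the bounded total scalar curvature, so a H\"older estimate against the shrinking volume, which forces $p<2$, gives $\parallel h_j-k_j\parallel_{L^p(\Omega^j_A\cap T_D(\Sigma^j_A),\bar g_j)}\to 0$.

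The main obstacle I anticipate is precisely this \emph{almost}-nonnegative setting: in the time-symmetric case $f_j\equiv 0$, $q_j\equiv 0$, $\bar g_j=g_j$ has genuinely nonnegative scalar curvature and a monotone mass function, so \cite{LeeSormani1} applies directly, whereas here the divergence term $-2\,\overline{\operatorname{div}}_j(q_j)$ from Jang's equation destroys both pointwise nonnegativity of $\bar R_j$ and monotonicity of $\bar m_j$. One must therefore show that this term integrates --- after the weighting dictated by the area-radius volume element --- to something uniformly small, handle simultaneously the cylindrical end of $\bar g_j$ at an inner apparent horizon, and re-prove the intrinsic-flat filling estimate in this almost-monotone regime rather than quote it. The same loss --- that the total scalar curvature of the Jang metric need only stay bounded, not tend to zero --- is what restricts the last conclusion to $L^p$ with $p<2$ rather than $p=2$ in \eqref{main-thm-kL2}.
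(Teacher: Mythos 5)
The statement you were assigned is the paper's Conjecture~\ref{rmrk-conj}, which speculates that Theorem~\ref{main-thm} holds \emph{without} spherical symmetry. The paper does not prove this conjecture; the ``outline below'' it alludes to is the proof sketch for the spherically symmetric Theorem~\ref{main-thm}. Your proposal invokes spherical symmetry decisively from its first sentence (reduction of Jang's equation to an ODE, area-radius gauge, monotone $\rho_j$), so it does not engage the conjectured generalization at all; it is instead an alternative outline for Theorem~\ref{main-thm}. And as such an outline, it takes a route genuinely different from the paper's, and with a gap.

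The paper performs a \emph{two}-stage deformation: Jang followed by the conformal change $\tilde g_j=u_j^{4/(n-2)}\bar g_j$ to zero scalar curvature. In spherical symmetry Lemma~\ref{birkhoff1} and Corollary~\ref{poiuy} force $\tilde g_j=g_{\mathbb E}$ exactly, so $\tilde m_j=0$ and the constant $\alpha_j$ in the conformal factor expansion equals $-m_j/2$. Plugging this into the stability inequality \eqref{stability1} --- obtained by integrating the conformal Laplacian equation against $u_j$, which simultaneously absorbs the divergence term $-2\,\overline{\operatorname{div}}_j q_j$ by parts --- yields $\|\nabla u_j\|^2_{L^2(\bar g_j)}\le Cm_j$, then $C^{0,1/2}$ control of $u_j$ away from the center, and hence $C^0$ control of $\bar g_j$ versus $g_{\mathbb E}$ on annuli; the Lakzian--Sormani theorem (Theorem~\ref{thm-subdiffeo}) then gives the intrinsic flat estimate. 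At no point does the paper establish or need uniform smallness or near-monotonicity of the Hawking mass $\bar m_j(\rho)$ of the Jang metric itself. Your proposal skips the conformal change, proposes to prove $\sup_\rho\bar m_j(\rho)\to 0$ directly from the Schoen--Yau identity, and then to adapt the Lee--Sormani filling construction. This is where the gap sits: you acknowledge that $-2\,\overline{\operatorname{div}}_j q_j$ destroys monotonicity of $\bar m_j$ and say one must show this term ``integrates to something uniformly small,'' but supply no mechanism. The mechanism in the paper is precisely the conformal factor $u_j$ serving as the test function in the integration by parts, which you have removed; the only other device in the paper that integrates away the divergence term is the generalized Jang equation with the special warping $\phi=\rho_{\bar s}$, and that is used only for the Penrose inequality (Theorem~\ref{Penrose}), not for the main theorem. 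Moreover, even if one had smallness of $\bar m_j(\rho)$, the Lee--Sormani filling construction (Lemma~\ref{lem-ex}) requires an increasing admissible Hawking mass function, which $\bar m_j$ will not satisfy; the paper sidesteps this by using the Lakzian--Sormani sub-diffeomorphism theorem instead. So your proposal neither addresses the conjecture nor, as an alternative proof of Theorem~\ref{main-thm}, closes its central step.
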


The corresponding almost rigidity or stability conjecture in the time-symmetric case was stated and proven in the spherically symmetric setting by Lee and the third author in \cite{LeeSormani1}. It has been confirmed in the graph setting by Huang, Lee, and the third author in \cite{HuangLeeSormani} and for geometrostatic manifolds by the third author with Stavrov in \cite{SormaniStavrov}. Initial controls on the metric tensor towards proving the time-symmetric conjecture have been found by Allen for regions covered by smooth inverse mean curvature flow in \cite{Allen}, and by the first author for axisymmetric manifolds \cite{Bryden}.

With the definition of asymptotic flatness used here the ADM mass $m$ and ADM energy $E$ agree in spherical symmetry since the linear momentum $P$ vanishes (Proposition \ref{nbvc}). In general when mass and energy differ, Conjecture \ref{rmrk-conj} could be quite subtle in the case of large linear momentum, as the construction of the base manifolds $(\bar{M}^n,\bar{g})$ presented here does not behave well in this setting. The methods used here and based on the Jang equation are tailored to the situation when $E$ is small, which will not be the case if $|P|$ stays uniformly away from zero.

We now give an outline of the proof of Theorem \ref{main-thm}, which is modeled on the Schoen-Yau approach to the positive mass theorem \cite{SchoenYauII}. It should be noted that some of the arguments do not require spherical symmetry.
The first step is to solve the 2nd order quasi-linear elliptic Jang equation for each $(M_j^n,g_j,k_j)$ to obtain solutions $f_j$ with asymptotically cylindrical blow-up at the outermost apparent horizon.  See Section~\ref{sec4} for details.
The original study of this equation in \cite{SchoenYauII} observed that the solution only blows-up at apparent horizons. Prescribed blow-up at the outermost horizon was obtained in work of Eichmair, Han, the second author, and Metzger
for low dimensions \cite{Eichmair,HanKhuri,Metzger}.
In the spherically symmetric case, the equation can be reduced to a 1st order ODE and the desired solutions can be produced in any dimension [Theorem~\ref{jangexistence}]. From the solutions a sequence of Riemannian manifolds,
\be
\textrm{ the Jang deformations: } \left(\bar{M}^n_j,\bar{g}_j=g_j+df_j^2\right),
\ee
can be constructed which serve as the base for the ambient static spacetimes of Theorem \ref{main-thm}.
Schoen-Yau showed that the scalar curvature of the Jang metric is nonnegative modulo a divergence term
as stated in (\ref{Jangscalar}).
The Jang manifolds are also uniformly asymptotically flat [Lemma~\ref{Unf-Asym-Flat for bar}], and
have the same ADM masses as the original initial data
[Corollary~\ref{bar-RotSym}]. A primary difference is that they have a cylindrical end where previously there was a boundary.  In Example~\ref{ex-Sch} we explicitly solve the Jang equation for a constant time slice of the Schwarzschild spacetime so that one can see precisely how this step may be implemented constructively.

The nonnegativity property of the scalar curvature of $(\bar{M}_j^n, \bar{g}_j)$ allows one to further conformally transform the Jang deformations to Riemannian manifolds of zero scalar curvature,
\be
\textrm{ the conformal transformations: } \left(\tilde{M}^n_j,\tilde{g}_j=u_j^{4/(n-2)}\bar{g}_j\right).
\ee
Along the cylindrical ends $u_j$ decays exponentially fast to zero and hence conformally closes this end (see \cite{SchoenYauII} and Proposition~\ref{contran}).
In general the masses of the conformal deformations converge to zero. Thus if no horizons are present or one restricts attention to domains outside the outermost minimal surface, it is expected (by the time-symmetric almost rigidity conjecture) that regions in $(\tilde{M}^n_j,\tilde{g}_j)$
converge to balls in Euclidean space in the intrinsic flat sense.
The hope is then to prove that the conformal factors $u_j$ are sufficiently close to $1$ in order to establish that regions in $(\bar{M}^n_j,\bar{g}_j)$ converge to balls in Euclidean space as well. See Remark \ref{rmrk-tilde-general}.

In the spherically symmetric setting the conformal deformation is Euclidean space $\tilde{g}_j=g_{\mathbb{E}}$, so the mass is $0$ and there are no horizons (cf. Lemma~\ref{birkhoff1}).
Therefore $\bar{g}_j$ is related to the Euclidean metric via the conformal factor $u_j$, and establishing \eqref{main-thm-VF} is reduced to controlling $u_j$. A global $L^2$ gradient bound in terms of the mass is obtained from the stability property associated with the Jang surface in Lemma~\ref{Sobolev-control-of-u}, and this is parlayed into $C^{0,\frac{1}{2}}$ control away from the center of the manifold in Proposition~\ref{C0-outside} by using the uniform asymptotically flat assumption and Lemma~\ref{asym-control-on-u-7890}. We then have that $u_j\rightarrow 1$ uniformly on appropriate subdomains avoiding the center.   This work is completed in Section~\ref{sec5}.

The volume preserving intrinsic flat convergence \eqref{main-thm-VF} is proven
in Section~\ref{sec6} in two main steps.  The overall approach is to apply a result of Lakzian and the third author \cite{Lakzian-Sormani}, as stated in Proposition~\ref{prop-main-thm-VFS}, to achieve intrinsic flat convergence.
First, control on $u_j$ is used
to show that regions avoiding the center are smoothly close to annuli in Euclidean space (see Lemma~\ref{prop-subdiffeo-1}).
Secondly, we prove that the volumes of the regions closer to the center are small
using area monotonicity and the coarea formula in Lemma~\ref{lem-vol-Omega-Not-W}.   The remaining required terms are estimated in various lemmas throughout the section.  Moreover, volume convergence follows from the above and is stated in Lemma~\ref{lem-vol-conv}. Without spherical symmetry one might imagine doing something similar, cutting out many wells rather than just the center as in joint
work of the third author with Stavrov in \cite{SormaniStavrov}, or using a completely different approach as in joint work of the third author with Huang and Lee in \cite{HuangLeeSormani}.

In Section~\ref{sec7} convergence of second fundamental forms \eqref{main-thm-kL2} is established, where the proof relies on nonnegativity of the spacetime Hawking mass.
Control on $|h_j-k_j|_{\bar{g}_j}$ away from the center is given in
Proposition~\ref{2ndFundFormOutside} using estimates for the conformal factors $u_j$, as well as the stability property associated with the Jang surface. While $|h_j-k_j|_{\bar{g}_j}$ might be large near the center, with the additional hypothesis on $k_j$ and the small volume inside in Proposition~\ref{L2 bound1}, convergence in the desired tubular neighborhood is achieved in Theorem~\ref{thmhk}.  Finally, in Section~\ref{sec8} we prove Theorem~\ref{main-thm} using all the above.

\smallskip
\noindent{\bf Acknowledgements:} The authors would like to thank Dan Lee for helpful conversations, and Walter Simon for comments. Christina Sormani gratefully acknowledges office space in the Simons Center for Geometry and Physics, Stony Brook University at which most of the research for this paper was performed, and also support from a CUNY Fellowship Leave. Edward Bryden would like to thank the CUNY Graduate Center for allowing
him to visit in Spring 2019 during which this paper was completed.

\section{Examples}
\label{sec2}

In this section we provide some examples which illustrate the importance of
various hypotheses in Theorem~\ref{main-thm}, and some intuition as to what is happening in the proof.  In the time-symmetric setting, where the objects of study are manifolds with nonnegative scalar curvature, examples are given with closed interior horizons
(Examples~\ref{ex-two-sheets} and ~\ref{ex-bubble}) that fail to have
volume preserving intrinsic flat convergence to Euclidean space.

The additional assumption of no closed interior horizons and no boundary is also considered.
In this setting, a time-symmetric initial data set is a graph over itself and the solution to Jang's equation is constant. An example within this context is provided that contains a deep well, demonstrating why tubular neighborhoods
are introduced in order to obtain volume preserving intrinsic flat convergence (Example~\ref{ex-deep-well}).

The proof of Theorem~\ref{main-thm} will then be applied to slices of the Schwarzschild spacetime, so one can see what happens when there is a boundary horizon.  Difficulties arise even in this time-symmetric example because  solutions to Jang's equation blow-up near the horizon so that the base spaces $(\bar{M}^n, \bar{g})$ possess an asymptotically cylindrical end.
In Example~\ref{ex-Sch} we see exactly how Schwarzschild slices arise as graphs over base spaces which are close to Euclidean space in the volume preserving intrinsic flat sense.
This clarifies why the proof of Theorem~\ref{main-thm} is delicate
when the manifolds have boundary.

Finally we consider examples which are not time-symmetric.  In Example~\ref{ex-to-null} it is demonstrated
how even the restriction to sequences of spacelike graphs $(M_j^n, g_j, k_j)$
in Minkowski space does not allow for proper control over the original sequence of Riemannian manifolds $(M_j^n, g_j)$.  This justifies why Theorem \ref{main-thm} only deals with convergence of the base manifolds, and does not address convergence of the given sequence of initial data.

\subsection{Horizons in Time-Symmetric Examples}

The assumption of no closed interior horizons is necessary to avoid
the formation of bubbles and other phenomena which may occur behind a
horizon.  Since the inner boundary is allowed to be a horizon, these hypotheses mean that the main theorem applies within the domain of outer communication. This is consistent with the basic intuition that the ADM mass cannot effectively `see' within a black hole. The following well-known examples explain why one cannot
hope to obtain volume preserving intrinsic flat convergence without these assumptions.

\begin{example}\label{ex-two-sheets}
Riemannian Schwarzschild space is a constant time slice of the Schwarzschild spacetime, with a metric that can be written as
\be \label{eqn-def-Sch}
g=\left(1+ z'(r)^2\right)dr^2 + r^2 g_{S^{n-1}}
\ee
where $z'(r) = \sqrt{2m/(r-2m)}$.  It has a horizon (minimal surface) at $r=2m$, and can be extended smoothly past the horizon by writing $r$ as a function of $z$. In fact the graph is that of a parabola.  If we take a sequence of
Riemannian Schwarzschild spaces of smaller and smaller mass $m$, this parabola becomes more vertical and the vertex decreases to the origin. See Figure ~\ref{fig-two-sheets}.  This sequence of Riemannian Schwarzschild spaces
converges smoothly to Euclidean space on compact sets that avoid the
increasingly thin necks, and by any weak notion of convergence is seen to converge
to a double sheeted Euclidean space.  The volumes of balls centered around
points on the horizon converge to twice the volume of a Euclidean ball.
It is only by removing the part behind the horizon that
one may consider the limit to be a single Euclidean space, and obtain volume
preserving convergence to Euclidean space.
\end{example}

\begin{figure}[h] 
   \centering
   \includegraphics[width=5in]{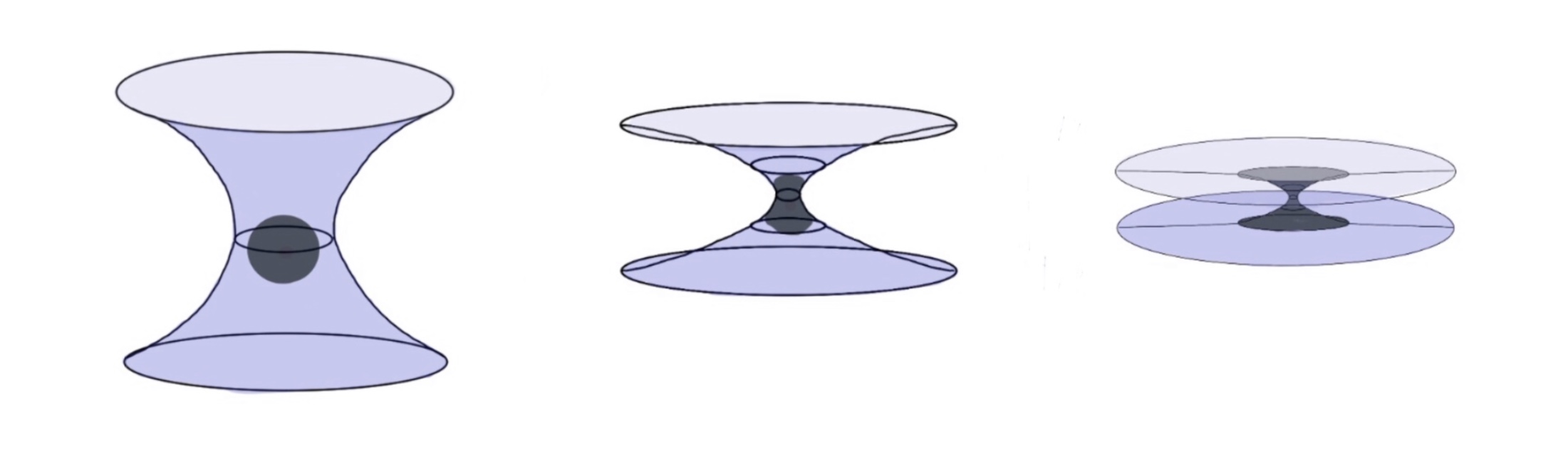}
   \caption{In Example \ref{ex-two-sheets} we see that a sequence of balls centered on the horizons of a sequence of Schwarzschild manifolds with $m_j \to 0$ has volume converging to twice the volume of a Euclidean ball.
      }
   \label{fig-two-sheets}
\end{figure}

\begin{example}\label{ex-bubble}
Start with the Riemannian Schwarzschild initial data viewed as a parabola using
the function $r(z)$ described in the previous example.  Keep the region outside the horizon exactly isometric to Riemannian Schwarzschild, but behind the black hole  attach a round sphere in a $C^1$ way. This is achieved by ensuring that the induced metrics and mean curvatures of the interface surfaces agree from both sides. This guarantees that the scalar curvature is distributionally nonnegative across the interface surface. Furthermore since Riemannian Schwarzschild is scalar flat and the sphere has positive scalar curvature, applying Ricci flow for a very short time, one obtains a smooth metric which is $C^1$ close to the original and has positive scalar curvature everywhere. The almost spherical region of the resulting manifold is called a bubble. See Figure~\ref{fig-bubble}.
Now perform this construction with a sequence of Schwarzschild spaces having masses converging to zero, while keeping the bubble the same size throughout the sequence.  By any notion of weak convergence this sequence converges to a Euclidean space with a sphere attached to it. In analogy with the previous example, balls of a fixed radius centered at points on the horizon have volumes converging
to the sum of the volume of a ball of the same radius in Euclidean space plus a ball of the same radius in the sphere.  Again we do not obtain volume preserving intrinsic flat convergence to Euclidean space, unless the part inside the horizon is cut out.
\end{example}

\begin{figure}[h] 
   \centering
   \includegraphics[width=5in]{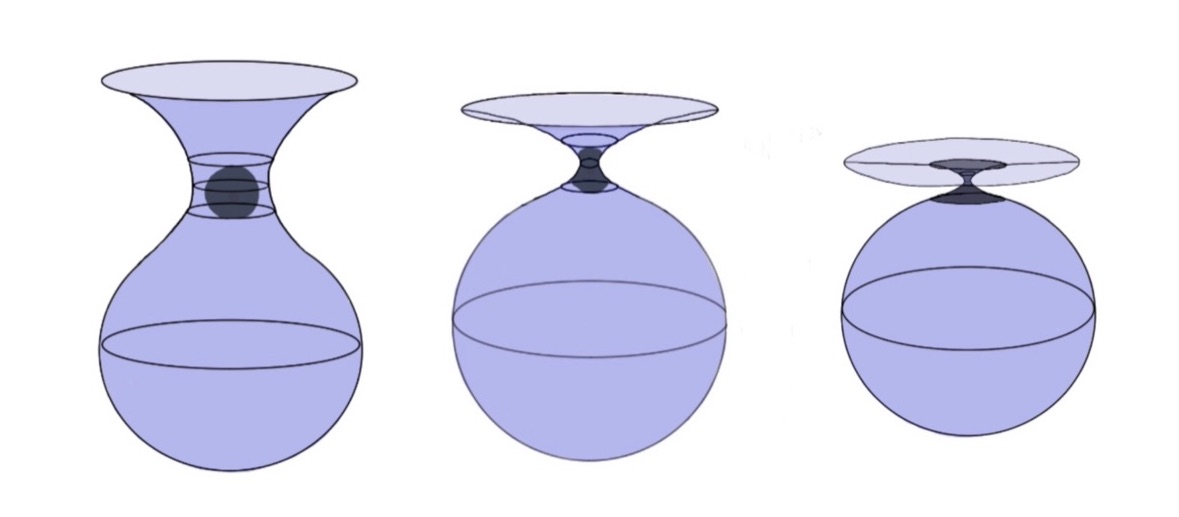}
   \caption{In Example \ref{ex-bubble} we see that a sequence of balls centered on the horizons of a sequence of manifolds with $m_j \to 0$ has volume converging to the sum of the volumes of a Euclidean ball and a ball in a sphere.
      }
   \label{fig-bubble}
\end{figure}

\subsection{Deep Wells in Time-Symmetric Examples}

Let us now consider Theorem~\ref{main-thm} in the time-symmetric case where there are no horizons and no boundary.
In such a setting the solution to Jang's equation is constant, so the theorem states that volume preserving intrinsic flat convergence occurs within the initial data themselves, as opposed to convergence of ambient spacetimes in which the data embed. This was established by D. Lee and the third author in \cite{LeeSormani1}.
In this subsection we first recall in Lemma~\ref{lem-ex} an example construction technique from \cite{LeeSormani1}.
We then review an example, Example~\ref{ex-deep-well}, with deep wells demonstrating the need to use
tubular neighborhoods to obtain volume preserving intrinsic flat convergence.

Recall the definition of Hawking mass for a surface $\Sigma$
in a Riemannian 3-manifold
\be
 m_{H}(\Sigma) = \frac{1}{2}\sqrt{\frac{A}{\omega_{2}}}\left(1-\frac{1}{4\pi}\int_\Sigma \left(\frac{H}{2}\right)^{2}\right),
 \ee
where $A$ and $H$ are the area and mean curvature of $\Sigma$. As described in Section \ref{7890} this may be generalized in spherical symmetry to higher dimensions $n$ by
\be \label{eqn-hawking-1}
m(s) = \frac{\rho^{n-2}(s)}{2}(1-\rho'(s)^2),
\ee
where the metric is expressed in a radial arclength coordinate $s$ and with area radius function $\rho(s)$. The first variation of Hawking mass becomes
\be \label{eqn-hawking-2}
m'(s) = \frac{\rho^{n-1}(s)\rho'(s)}{2(n-1)}R,
\ee
which is nonnegative for nondecreasing area radius functions and nonnegative scalar curvature.

\begin{lem}\label{lem-ex}
Let $\mathcal{M}$ denote the collection of asymptotically flat spherically symmetric manifolds
\be
\left(r^{-1}[r_{min}, \infty)\subset \mathbb{R}^n, \text{ }g=(1+[z'(r)]^2)dr^2 + r^2 g_{S^{n-1}}\right)
\ee
with nonnegative scalar curvature that have no closed interior minimal surfaces and either no boundary, or minimal surface boundary $r^{-1}(r_{min})$.
Let $\mathcal{H}$ be the collection of admissible Hawking mass functions, that is increasing functions
$m:[r_{min},\infty)\to\R$ such that
\be \label{lem-ex-1}
m(\rmin)=\frac{1}{2}\rmin^{n-2},
\ee
and
\be
m(r)<\frac{1}{2}r^{n-2},
\ee
for $r>\rmin\ge 0$.  There is a constructive
bijection between $\mathcal{M}$ and $\mathcal{H}$ such that
\be
m(r)=\frac{r^{n-2}}{2}\left(\frac{z'(r)^2}{1+z'(r)^2}\right)<\frac{1}{2}r^{n-2}.
\ee
\end{lem}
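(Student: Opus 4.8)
The plan is to make the bijection completely explicit: it is the algebraic change of variables relating the warping function of the metric to the Hawking mass, and the work consists of checking that the structural conditions built into $\mathcal{M}$ and $\mathcal{H}$ match under this correspondence. Since the spherical factor of $g$ is $r^2 g_{S^{n-1}}$, the coordinate $r$ is already the area radius $\rho$, so a member of $\mathcal{M}$ is determined by $r_{min}$ together with the nonnegative function $w(r):=z'(r)^2$; the height $z$ enters $g$ only through $z'(r)^2$, so it is recovered from $w$ only up to sign and an additive constant, which is harmless. In an arclength coordinate $s$, with $ds=\sqrt{1+w(r)}\,dr$, one has $\rho'(s)^2=(1+w)^{-1}$, so substituting into \eqref{eqn-hawking-1} yields
\be
m(r)=\frac{r^{n-2}}{2}\Bigl(1-\frac{1}{1+w(r)}\Bigr)=\frac{r^{n-2}}{2}\cdot\frac{w(r)}{1+w(r)},
\ee
which is the formula asserted in the statement; I take this as the definition of the map $\Phi:\mathcal{M}\to\mathcal{H}$.

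Next I would check that $\Phi$ actually lands in $\mathcal{H}$. The strict bound $m(r)<\tfrac12 r^{n-2}$ for $r>r_{min}$ is immediate from $w/(1+w)<1$. For the boundary value, recall that the coordinate sphere $r^{-1}(r_{min})$ (when $r_{min}>0$) is a minimal surface precisely when its area is critical, i.e.\ $\rho'(s)\to 0$, i.e.\ $w\to\infty$ as $r\to r_{min}$, and by the displayed formula this is equivalent to $m(r_{min})=\tfrac12 r_{min}^{n-2}$; when $r_{min}=0$ the required condition is just $m(0)=0$, which holds since a point has zero Hawking mass. Monotonicity of $m$ is exactly \eqref{eqn-hawking-2}: because $\rho(s)=r>0$ and $\rho'(s)=(1+w)^{-1/2}>0$, the sign of $m'(s)$ is the sign of $R$, and $R\ge 0$ by definition of $\mathcal{M}$; since $s$ is a strictly increasing function of $r$, $m$ is nondecreasing in $r$. (I read ``increasing'' as ``nondecreasing'', so that flat space $w\equiv 0$, $m\equiv 0$, $r_{min}=0$ is included.)

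The inverse map $\Psi:\mathcal{H}\to\mathcal{M}$ is obtained by solving the displayed relation for $w$,
\be
w(r)=\frac{2m(r)}{r^{n-2}-2m(r)},
\ee
which is a well-defined nonnegative function on $(r_{min},\infty)$ because $m$ nondecreasing forces $m(r)\ge m(r_{min})\ge 0$ while $m(r)<\tfrac12 r^{n-2}$ keeps the denominator strictly positive (so the resulting metric is genuinely Riemannian on the interior). Put $z(r)=\int_{r_{min}}^{r}\sqrt{w(t)}\,dt$ and $g=(1+w(r))\,dr^2+r^2 g_{S^{n-1}}$. Then $R\ge 0$ follows by running \eqref{eqn-hawking-2} backwards ($m'\ge 0$ and $\rho'>0$ give $R\ge 0$); $r^{-1}(r_{min})$ is minimal (or absent when $r_{min}=0$) by the endpoint value of $m$ as above; and there are no closed interior minimal hypersurfaces, because the coordinate spheres foliate the interior with strictly positive outward mean curvature $H=(n-1)\rho'(s)/\rho=(n-1)(1+w)^{-1/2}/r>0$, so a closed minimal hypersurface, at a point where $r$ restricted to it is minimized, would touch a coordinate sphere from its outer side and violate the maximum principle. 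Finally $\Phi$ and $\Psi$ are mutually inverse simply because the two displayed formulas relating $m$ and $w$ are algebraic inverses of one another, and everything else ($g$, and $z$ up to the irrelevant sign and constant) is a function of $w$ alone.

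The one genuinely delicate point — the place where I would expect to need either an extra hypothesis or a weaker regularity category than ``smooth manifold'' — is the behavior at the two ends of the interval. Near $r=r_{min}>0$ one has $z'(r)\to\infty$, so to see that $\Psi(m)$ is an honest Riemannian manifold with minimal boundary one must pass to $z$ as the coordinate, write $r=r(z)$, and check smoothness of the metric across $z=0$, exactly as for the Schwarzschild parabola; this constrains the rate at which $m(r)\to\tfrac12 r_{min}^{n-2}$, and similarly smooth closure at $r_{min}=0$ requires $m(r)$ to vanish sufficiently fast. In the same spirit, membership of $\Psi(m)$ in the asymptotically flat class \eqref{defn-asym-flat}--\eqref{[[[[} translates, via $w=2m/(r^{n-2}-2m)$, into decay of $m(r)$ to a finite ADM mass at the rates encoded there. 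One should regard these regularity and decay requirements as part of what ``asymptotically flat'' and ``admissible'' mean for the two classes, after which the lemma is precisely the explicit change of variables above together with the sign dictionary \eqref{eqn-hawking-2}.
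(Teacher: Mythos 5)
The paper does not supply its own proof of this lemma: it is recalled from Lee--Sormani \cite{LeeSormani1} with no argument given in the text. Your proof is the explicit change of variables one expects and is essentially the proof in \cite{LeeSormani1}. You correctly identify $w=z'(r)^2$ as the free datum, derive the Hawking mass formula from \eqref{eqn-hawking-1} via $\rho'(s)^2=(1+w)^{-1}$, recognize the endpoint condition \eqref{lem-ex-1} as the limit $w\to\infty$ (minimal boundary) or $w\to 0$ (smooth pole), invert the relation algebraically, and use \eqref{eqn-hawking-2} to pass in both directions between $R\geq 0$ and monotone $m$. Your closing observation---that the bijection only makes sense once one reads the smoothness across $r_{\min}$ and the asymptotic decay as built into ``asymptotically flat'' on one side and ``admissible'' on the other---is a genuine subtlety the paper's terse statement suppresses, and it is right to flag it.

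One step does not work as written: the argument that $\Psi(m)$ has no closed interior minimal hypersurfaces. You touch a coordinate sphere at the point of $\Sigma$ where $r|_\Sigma$ is \emph{minimized}; but there $\Sigma$ lies in $\{r\geq r_0\}$, tangent to $S_{r_0}$ from the convex (outer) side, and the comparison principle only gives $H_\Sigma \leq H_{S_{r_0}}$, i.e.\ $0\leq H_{S_{r_0}}$, which is no contradiction---this is exactly the local picture of a catenoid neck touching a small sphere from outside. The contradiction comes from the other end: at the point where $r|_\Sigma$ is \emph{maximized}, $\Sigma$ lies in $\{r\leq r_1\}$ and is tangent to $S_{r_1}$ from the concave side, and tangency there forces $H_\Sigma \geq H_{S_{r_1}}>0$, contradicting minimality. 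This is a one-line fix (replace min by max), but as stated the step is wrong.
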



In \cite{LeeSormani1} this result was used to
construct an example with an arbitrarily deep well.  Here we also describe the
volumes in this example, justifying the necessity of cutting off the region using
tubular neighborhoods of fixed size $D$ to obtain volume preserving
intrinsic flat convergence in Theorem~\ref{main-thm}.

\begin{example}\label{ex-deep-well}
Given $A>0$, $L>0$, and $\delta>0$
there exists $(M^n,g)\in \mathcal{M}$
with ADM mass $m< \delta$ such that the
distance $d(\Sigma_{min}, \Sigma_{A}) >L$, where $\Sigma_{A}$ is a symmetry sphere
with area $A$ and $\Sigma_{min}$ is either the boundary $\partial M^n$
or the pole.  See Figure~\ref{fig-deep-well}. In fact the example is constructed by first choosing a radius $r_\epsilon$ depending on $\delta$,
and then constructing the admissible Hawking mass function
so that the distance between the levels $r_\epsilon$ and $r_\epsilon/2$
is an arbitrary value $L$.
Thus the volume between these level sets, computed with
the coarea formula, provides a lower bound
\be
\vol_g(\Omega_A) \ge A_\delta L,
\ee
where $A_\delta$ is the area of the level set $r^{-1}(r_\epsilon)$
which depends on $\delta$ but not $L$.  Taking a sequence
with
\be
\delta_j \to 0 \quad\textrm{ and }\quad L_j=j / A_{\delta_j} \to \infty,
\ee
we obtain a sequence of spherically symmetric examples
which are increasingly deep and have
\be
\vol_{g_j}(\Omega^j_A) \ge \vol_{g_j}(\Omega^j_{A_{\delta_j},A})\to \infty.
\ee
By \eqref{main-thm-VF} of Theorem~\ref{main-thm} it holds that for any fixed $D>0$,
\be
\vol_{g_j}\left(\Omega^j_A \cap T_D(\Sigma^j_A)\right) \to \vol_{g_{\mathbb{E}}}\left(B_0(\rho_A)\right)<\infty.
\ee
This fixed distance $D>0$ of the tubular neighborhood is needed to cut off the arbitrarily large volumes in the arbitrarily deep wells.
\end{example}

 \begin{figure}[h] 
   \centering
   \includegraphics[width=.4\textwidth]{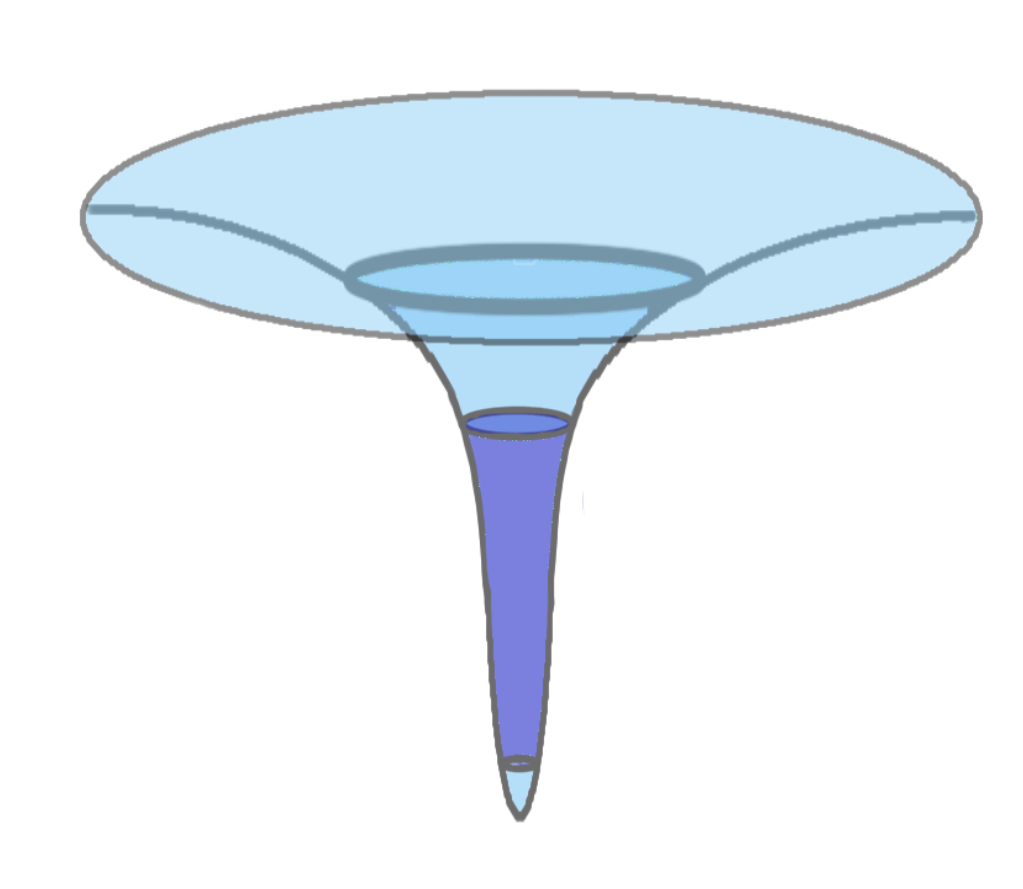}
   \caption{This manifold $(M^n_j,g_j)$ of Example~\ref{ex-deep-well} has a deep well with large volume in the shaded region $\Omega^j_{A_{\delta_j},A}\subset M_j^n$.}
   \label{fig-deep-well}
\end{figure}

\subsection{Riemannian Schwarzschild Space}

Let us now consider Theorem~\ref{main-thm} in the time-symmetric case where there is a boundary horizon; for simplicity we restrict the discussion in this subsection to dimension $n=3$. Even though this setting is time-symmetric the solutions of Jang's equation will not be constant, rather they will blow-up at the horizon boundary. These solutions are then used to embed the initial data $(M^3,g,0)$
into $({\mathbb{R}}\times \bar{M}^3, -dt^2 +\bar{g})$ as a graph over the base space $(\bar{M}^3, \bar{g})$. This base will have different properties than the original data, in particular it will have an asymptotically cylindrical end.  Next an appropriate conformal factor $u$ is found so that the new metric $\tilde{g}=u^4 \bar{g}$ is scalar flat. These are some of the main steps in the proof of Theorem~\ref{main-thm}, and will in this subsection be computed explicitly for Riemannian Schwarzschild initial data.


\begin{example}\label{ex-Sch}
Recall that the induced metric on a time slice $M^3=r^{-1}[2m,\infty)$ of the Schwarzschild spacetime of mass $m$ can be written in the form
\be
g=\left(1-\frac{2m}{r}\right)^{-1}dr^2 +r^2 g_{S^2}.
\ee
The Jang equation may be solved explicitly in this case for a blow-up solution. To see this observe that from \cite{Bray-KhuriDCDS} and the discussion in Section \ref{7788}, Jang's equation may be reduced to a first order ODE by setting
\begin{equation}
v=\frac{\sqrt{g^{11}}f'}
{\sqrt{1+g^{11}f'^{2}}},
\end{equation}
where $g^{11}=1-2m/r$. Namely, the Jang equation in this case becomes simply
\begin{equation}
v'+\frac{2}{r}v=0,
\end{equation}
and the blow-up solution is
$v=\left(\frac{2m}{r}\right)^2$. It follows that
\begin{equation}
g^{11}f'^2=\frac{v^2}{1-v^2}=\frac{1}{\left(r/2m\right)^4-1}
=\frac{1}{(1-2m/r)(1+2m/r)(r/2m)^2[1+(r/2m)^2]},
\end{equation}
so that
\begin{equation}
f'=\frac{(1-2m/r)^{-1}}{(r/2m)\sqrt{(1+2m/r)[1+(r/2m)^2]}}.
\end{equation}
Therefore the Jang metric is
\begin{equation}
\bar{g}=g+df^2=\bar{g}_{11}dr^2+r^2 g_{S^2}=(g_{11}+f'(r)^2)dr^2+r^2 g_{S^2}
\end{equation}
with
\begin{equation}
\bar{g}_{11}=g_{11}+f'(r)^2=(1-2m/r)^{-2}\left[(1-2m/r)+\frac{1}{(r/2m)^2 (1+2m/r)[1+(r/2m)^2]}\right].
\end{equation}
This clearly has an asymptotically cylindrical end as $r\rightarrow 2m$.  Figure~\ref{fig-ex-Sch} illustrates how the
Riemannian Schwarzschild geometry embeds as a graph over this Jang deformation.  Note that it becomes increasingly null upon approach to the horizon.

\begin{figure}[h] 
   \centering
   \includegraphics[width=5in]{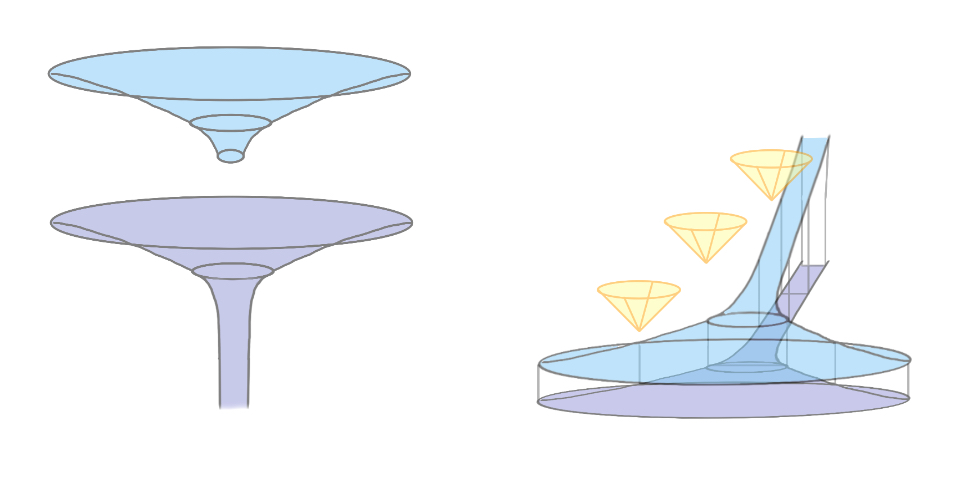}
   \caption{On the left, the Riemannian Schwarzschild geometry (light blue) and its Jang perturbation $(\bar{M}^3,\bar{g})$ in purple are shown as embedded into $\mathbb{E}^4$, with the graphical height $z$ coordinate directed upwards. On the right, the Riemannian Schwarzschild space is illustrated as a graph over the base Jang perturbation. This depiction takes place in 5-dimensional Minkowski space
   ${\mathbb R}^{1,4}\supset({\mathbb R}\times \bar{M}^3, -dt^2 +\bar{g})$,
   where the spatial $z$ coordinate is directed inwards. Light cones are shown in yellow.}
   \label{fig-ex-Sch}
\end{figure}

The conformal deformation to zero scalar curvature can also be given explicitly for this Schwarzschild example. To do this let $r=r(\tilde{r})$ be such that
\be
\bar{g}_{11}\left(\frac{dr}{d\tilde{r}}\right)^2=\left(\frac{r}{\tilde{r}}\right)^2,
\ee
then
\begin{equation}
\bar{g}=\bar{g}_{11}\left(\frac{dr}{d\tilde{r}}\right)^2 d\tilde{r}^2
+\left(\frac{r}{\tilde{r}}\right)^2 \tilde{r}^2 g_{S^2}
=\left(\frac{r}{\tilde{r}}\right)^2\left[d\tilde{r}^2 +\tilde{r}^2 g_{S^2}\right]=\left(\frac{r}{\tilde{r}}\right)^2 {g_{\mathbb E}}.
\end{equation}
We may solve for $\tilde{r}$ in terms of $r$ by
\begin{equation}
\log \tilde{r}=\int_{4m}^r \sqrt{\bar{g}_{11}}r^{-1}dr.
\end{equation}
Now set $u^{-4}=(r/\tilde{r})^2$ so that ${g_{\mathbb E}}=u^4 \bar{g}$. Thus,  $u=\sqrt{\tilde{r}/r}$ serves as the desired conformal factor yielding a scalar flat deformation. The fact that this conformal change resulted in a Euclidean metric is not special to the Schwarzschild example, as will be seen in Section \ref{sec5}.
\end{example}



Figure~\ref{fig-Sch-seq} depicts a sequence of Riemannian Schwarzschild manifolds $(M^3_j,g_j)$ with masses $m_j\rightarrow 0$, embedded as graphs over a sequence of base Jang deformations $(\bar{M}^3_j,\bar{g}_j)$ with asymptotically cylindrical ends that converge in the pointed volume preserving intrinsic flat sense to Euclidean space.

\begin{figure}[h] 
   \centering
   \includegraphics[width=.8\textwidth]{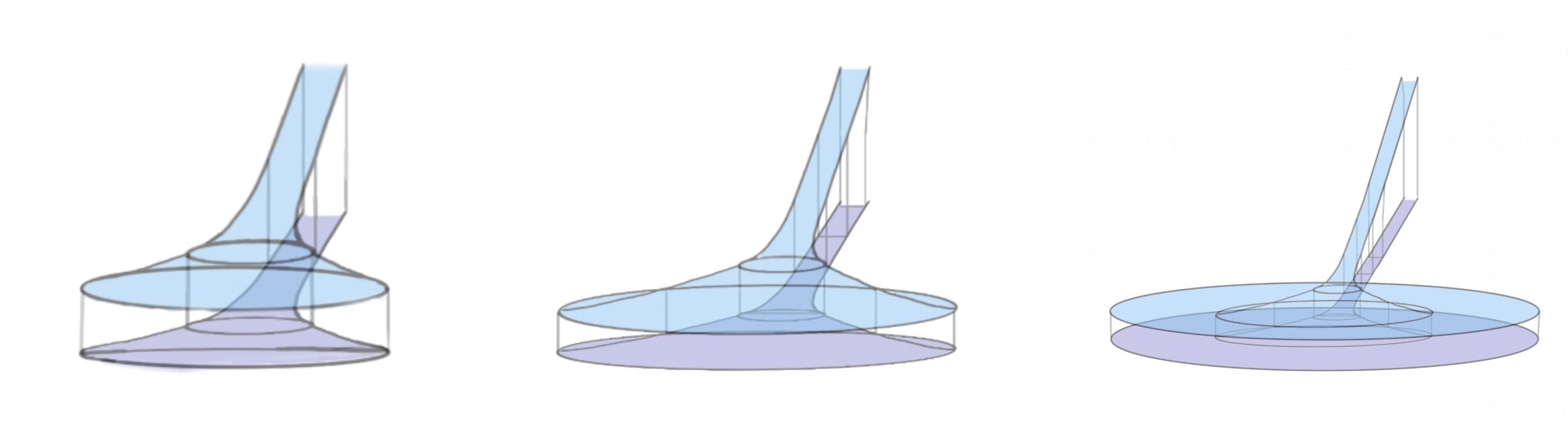}
   \caption{A sequence of Riemannian Schwarzschild manifolds as graphs over their Jang deformations with masses tending to zero. See also Figure \ref{fig-ex-Sch}.}
   \label{fig-Sch-seq}
\end{figure}

\subsection{Graphs in Minkowski Space}

If $(M_j^n, g_j, k_j)$ arise as spacelike graphs $t=f_j(x)$ in Minkowski space,
then the Jang metric $\bar{g}_j$ obtained by solving Jang's equation is exactly the Euclidean metric and $k_j$ is the second fundamental form of the graph. In the notation of Theorem \ref{main-thm} we have
\be
\bar{g}_j=g_{\mathbb E}, \quad\qquad g_j = - df_j^2 + g_{\mathbb E}, \quad\qquad h_j=k_j.
\ee
Theorem~\ref{main-thm} is trivially true and there is nothing to prove. On the other hand, such examples can exhibit pathological behavior from the point of view of establishing volume preserving intrinsic flat convergence of $(M_j^n,g_j)$. In this subsection an example is presented to demonstrate why we say nothing about
the limiting behavior of $(M_j^n, g_j)$ in Theorem~\ref{main-thm}.
In particular, even for sequences of spacelike graphs in Minkowski
space one cannot hope for more than subsequential convergence, and the limiting space need not be well-behaved.

\begin{rmrk} \label{rmrk-Lip}
If $(M_j^n, g_j, k_j)$ arise as spacelike graphs $t=f_j(x)$ in Minkowski space,
then $g_j = - df_j^2 + {g_{\mathbb E}}$ is positive definite, so the Lipschitz norm satisfies $Lip_{g_{\mathbb E}}(f_j) <1$. Thus by Arzela-Ascoli a subsequence of the $f_j$ converge
to a Lipschitz function $f_\infty$ with $Lip_{g_{\mathbb E}}(f_j) \le1$.
However, the graph of $f_\infty$ need not be spacelike!
\end{rmrk}

\begin{figure}[h] 
   \centering
   \includegraphics[width=.4\textwidth]{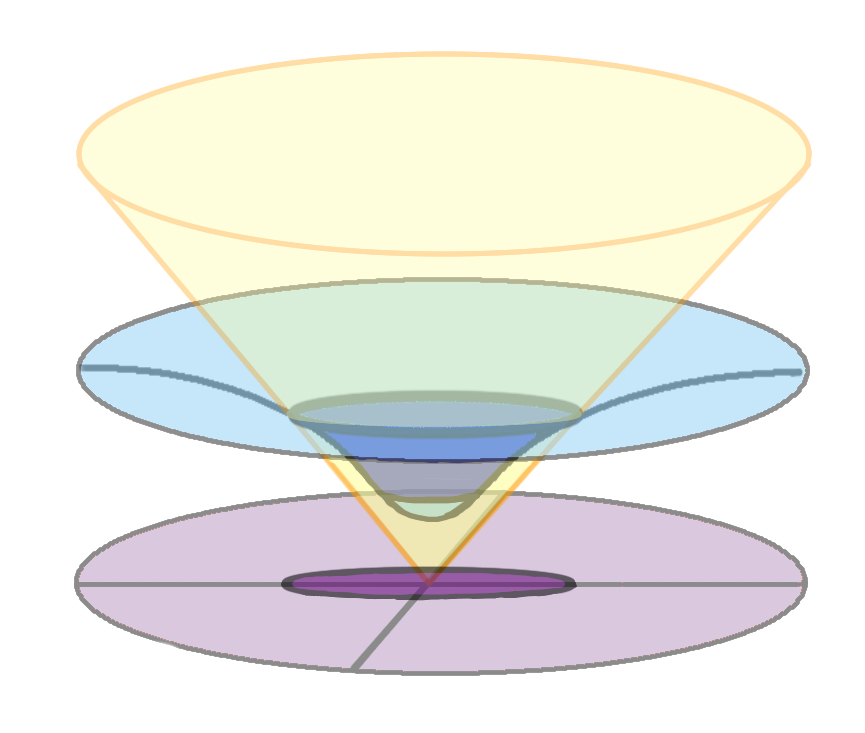}
   \caption{In Example \ref{ex-to-null} we see that even if our given manifold is a graph in Minkowski space the manifold (in blue) can have regions (in dark blue) of very small volume because it is almost null.
      }
   \label{fig-ex-to-null}
\end{figure}

\begin{example}\label{ex-to-null}
Consider a sequence of graphs $t=f_j(x)$ in Minkowski space which are spherically symmetric, with $Lip_{g_{\mathbb E}}(f_j) <1$, and
converging to $f_\infty$ that satisfies
\be
f'_\infty(r)=1 \quad\textrm{ for } \quad r\in [\rho_{A_1}, \rho_{A_2}]
\ee
as in Figure~\ref{fig-ex-to-null}, where $r$ is the radial distance function for $g_{\mathbb{E}}$. It can be arranged that $g_j$ converge in the $C^0$ sense. The limit is a semidefinite metric $g_\infty$ with
\be
g_\infty(\partial_r, \partial_r)= -f'_\infty(r)^2  + 1 =0\quad\textrm{ for } \quad r\in [\rho_{A_1}, \rho_{A_2}].
\ee
By the coarea formula
\be
\vol_{g_j}(\Omega_{A_1, A_2})\le A_2 d_{g_j}(\Sigma_{A_1}, \Sigma_{A_2}) \to 0,
\ee
since
\be
d_{g_j}(\Sigma_{A_1}, \Sigma_{A_2})=\int_{\rho_{A_1}}^{\rho_{A_2}} \sqrt{g_j(\partial_r, \partial_r)}  dr\to 0.
\ee
In contrast
\be
\vol_{\bar{g}_j}(\Omega_{A_1, A_2}) \to \vol_{g_{\mathbb E}}(B_0(\rho_{A_2})\setminus B_0(\rho_{A_1})) \neq 0.
\ee
Thus we find nice behavior of the base spaces $(\bar{M}_j^n, \bar{g}_j)$
as described in Theorem~\ref{main-thm}, with pathological
limiting behavior for the original sequence $(M_j^n, g_j)$.
\end{example}

\section{Manifolds with Spherical Symmetry}
\label{sec3}

In this section we prove that outermost apparent horizons inherit the symmetries of the asymptotically flat initial data sets in which they lie (Lemma~\ref{sphericalsymmetricAH}), that the areas of symmetry spheres are monotonic in spherically symmetric initial data sets without horizons or with outermost apparent horizon boundary (Lemma~\ref{Monotonicity}), and
establish the spacetime Penrose Inequality in all dimensions under the assumption of spherically symmetry (Theorem~\ref{Penrose}).  
These results are of use to us when proving Theorem~\ref{main-thm}.   Prior work in these directions is reviewed within.

\subsection{Horizons in Initial Data With Symmetry}

\begin{lem}\label{sphericalsymmetricAH}
Let $(M^n,g,k)$ be an asymptotically flat initial data set which admits a continuous symmetry with generator $\eta$. This means that $\eta$ is a Killing field which leaves $k$ invariant, and thus the following Lie derivatives vanish $\mathfrak{L}_{\eta}g=\mathfrak{L}_{\eta}k=0$. If the outermost apparent horizon is smooth, then $\eta$ must be tangential to it.

In particular, if $(M^n,g,k)$ is spherically symmetric with smooth outermost apparent horizon then this surface is also spherically symmetric.
\end{lem}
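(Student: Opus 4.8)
The plan is to show that the flow $\{\phi_t\}$ generated by $\eta$ preserves the outermost apparent horizon $\Sigma$ setwise; differentiating $\phi_t(\Sigma)=\Sigma$ at $t=0$ then gives that $\eta|_\Sigma$ is tangent to $\Sigma$. The spherically symmetric statement follows at once: applying this to every infinitesimal generator of the $\mathrm{SO}(n)$--action makes $\Sigma$ an $\mathrm{SO}(n)$--invariant closed embedded hypersurface of $\mathbb{R}^n$ or $\mathbb{R}^n\setminus B_0(r_0)$, and since the $\mathrm{SO}(n)$--orbits are exactly the symmetry spheres $\rho=\mathrm{const}$ (together with the pole, when present), a connected such hypersurface must coincide with a single symmetry sphere.

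The first step is to check that the symmetry permutes apparent horizons. Since $\mathfrak{L}_{\eta}g=\mathfrak{L}_{\eta}k=0$, each $\phi_t$ is an isometry of $(M^n,g)$ with $\phi_t^{\ast}k=k$; an isometry permutes the ends of $M^n$, so $\phi_t$ fixes the asymptotic end and hence preserves the outward unit normal along any separating hypersurface. Consequently $\phi_t$ transports $H_{\Sigma}$ and $\operatorname{Tr}_{\Sigma}k$, giving
\be
\theta_{\pm}\bigl[\phi_t(\Sigma)\bigr]=\bigl(\theta_{\pm}[\Sigma]\bigr)\circ\phi_t^{-1},
\ee
so that $\phi_t(\Sigma)$ is again a future (resp.\ past) apparent horizon; for $|t|$ small it is a closed embedded hypersurface $C^1$--close to $\Sigma$, isotopic to it, and still separating the inner boundary (or pole) from spatial infinity.

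The second step uses outermost-ness. Recall the standard fact that the outermost apparent horizon is the boundary of the trapped region and therefore encloses every apparent horizon (Andersson--Metzger for $n=3$, and via the marginally outer trapped surface existence theory of Eichmair for $3\le n\le 7$; with only the literal definition one instead takes the outer envelope $\partial U$ of $\Sigma\cup\phi_t(\Sigma)$, with $U$ the unbounded component of the complement, as an inner barrier together with a large coordinate sphere, and solves the MOTS Plateau problem to produce a smooth apparent horizon strictly enclosing $\Sigma$ unless the two surfaces coincide). Thus $\phi_t(\Sigma)$ is enclosed by $\Sigma$; applying the same to the apparent horizon $\phi_{-t}(\Sigma)$ and pushing forward by $\phi_t$ shows that $\Sigma=\phi_t\bigl(\phi_{-t}(\Sigma)\bigr)$ is enclosed by $\phi_t(\Sigma)$. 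Since the exterior region of a separating hypersurface is the unique unbounded component of its complement, mutual enclosure forces the two closed ``inside'' regions to coincide, hence $\phi_t(\Sigma)=\Sigma$ for all small $t$, hence for all $t$, and $\eta$ is tangent to $\Sigma$.

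The step I expect to be the real obstacle is precisely the input invoked in the second step -- that the outermost apparent horizon encloses every apparent horizon, equivalently that it is the boundary of the trapped region and that this region is well behaved. This rests on MOTS existence and compactness results, so it is unproblematic for $3\le n\le 7$ and in the spin case, but would need separate justification in higher dimensions in full generality. For the application in this paper the difficulty disappears: by \eqref{g-sph-sym} and monotonicity of the area radius $\rho$ (Lemma~\ref{Monotonicity}), spherically symmetric apparent horizons are the symmetry spheres $\rho=\mathrm{const}$ solving an explicit first--order relation in $\rho$ and $\rho'$, so the outermost one is simply the outermost such sphere and is $\mathrm{SO}(n)$--invariant by inspection. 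As an alternative to the enclosure argument, when the normal component $\langle\eta,\nu\rangle$ of $\eta$ along $\Sigma$ has constant sign one may instead run the strong maximum principle for marginally outer trapped surfaces to force $\phi_t(\Sigma)$ and $\Sigma$ to agree wherever they touch and conclude by an open--closed argument on $\Sigma$; the sign-changing case, however, still requires the envelope construction above.
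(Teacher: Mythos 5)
Your proof is correct and rests on exactly the same key external input as the paper's, namely the Andersson--Eichmair--Metzger identification of the outermost apparent horizon with the boundary of the trapped region (hence the fact that it encloses every other apparent horizon). The packaging differs slightly: the paper argues by contradiction, supposing $\eta$ is not tangent, producing $\varphi_{t_0}(\Sigma)$ with a piece strictly outside $\Sigma$, and then observing that $\partial\mathcal{T}$ would have to enclose both surfaces and therefore differ from $\Sigma$, violating outermostness. You instead run a direct mutual-enclosure argument: $\phi_t(\Sigma)$ is enclosed by $\Sigma$, and applying the same to $\phi_{-t}(\Sigma)$ and pushing forward by the isometry $\phi_t$ shows $\Sigma$ is enclosed by $\phi_t(\Sigma)$, forcing equality. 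Your version is arguably cleaner since it avoids the (intuitively clear but slightly delicate to justify) claim that non-tangency produces a definite outward excursion, and it makes the role of the isometry's enclosure-preservation explicit. Your cautionary remarks about the dimensional restriction on the MOTS regularity/existence theory, and the observation that in spherical symmetry the issue can be sidestepped because horizons are symmetry spheres solving a first-order ODE, are both apt; be aware, though, that if you invoke Lemma~\ref{Monotonicity} for that shortcut you risk a circular dependence, since in the paper Lemma~\ref{Monotonicity} itself cites the present lemma to conclude the outermost horizon is a symmetry sphere.
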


\begin{rmrk}
The existence of outermost apparent horizons due to appropriate trapping is proven in \cite{Andersson-Metzger,Eichmair1}. Like minimal surfaces, they are shown to have a singular set which is no larger than codimension 7. Thus for $2\leq n\leq 7$ the outermost apparent horizon is smooth.
\end{rmrk}

\begin{rmrk}
The symmetry inheritance property for stable MOTS was already observed in Theorem 8.1 of \cite{AnderssonMarsSimon} (see also \cite{PaetzSimon}) when $n=3$. There a spacetime perspective was taken, as opposed to the initial data point of view used here.
\end{rmrk}

\begin{proof}
The following argument is a generalization of that in \cite{BrydenKhuriSokolowsky} for outermost minimal surfaces in axisymmetry. Suppose that the outermost apparent horizon $\Sigma$ does not admit the stated symmetry.
Then the Killing field $\eta$ is not tangential to $\Sigma$ at all points. Thus, if $\varphi_t$ denotes the flow of this Killing field so that $\partial_t\varphi_t=\eta\circ\varphi_t$, then there is a nonzero $t_0$ near zero such that a domain within $\varphi_{t_0}(\Sigma)$ lies outside of $\Sigma$.  Furthermore, observe that since $(M^n,g,k)$ is invariant under the action of $\varphi_t$ the surface $\varphi_{t_0}(\Sigma)$ is an apparent horizon of the same type.

Consider now the compact set $\mathcal{U}$ which is the union of all smooth compact embedded apparent horizons within $M^n$, and define the trapped region $\mathcal{T}$ to be the union of $\mathcal{U}$ with all the bounded components of $M^n\setminus\mathcal{U}$. As described in \cite[Theorem 3.3]{Andersson-Eichmair-Metzger} the outermost apparent horizon arises as the boundary $\partial\mathcal{T}$, moreover it is embedded and smooth away from a singular set of Hausdorff codimension at most 7; in fact it will be smooth by the assumptions of this lemma. Because $\partial\mathcal{T}$ must enclose both $\Sigma$ and $\varphi_{t_0}(\Sigma)$, it cannot agree with $\Sigma$ at all points. This, however, contradicts the outermost assumption for $\Sigma$.
\end{proof}

\subsection{Monotonicity of Area}

In this and the following subsection, the discussion is relevant to the spacetime Penrose inequality in spherical symmetry. The next result may be derived from the arguments in \cite[Section 4]{Mars}.

\begin{lem}\label{Monotonicity}
Let $(M^n,g,k)$ be a spherically symmetric asymptotically flat initial data set as in \eqref{g-sph-sym}. Outside of the outermost apparent horizon, the area of symmetry spheres in $(M^n,g)$ is a strictly increasing function of the radial coordinate. In particular, the warping function $\rho$ defining $g$
is also an increasing function.
\end{lem}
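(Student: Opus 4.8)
The plan is to show that the null expansions of the symmetry spheres control the sign of $\rho'$, and that outside the outermost apparent horizon both null expansions must be positive. First I would set up arclength coordinate $s$ along the radial direction, so that $g = ds^2 + \rho(s)^2 g_{S^{n-1}}$, and compute the mean curvature of a symmetry sphere $\Sigma_s$ with respect to the outward unit normal $\partial_s$, obtaining $H_{\Sigma_s} = (n-1)\rho'(s)/\rho(s)$. Using the decomposition \eqref{g-sph-sym} of $k$ into normal and tangential parts, the trace over $\Sigma_s$ is $\operatorname{Tr}_{\Sigma_s} k = (n-1)k_t(s)$, so the null expansions become
\be
\theta_\pm(\Sigma_s) = (n-1)\left(\frac{\rho'(s)}{\rho(s)} \pm k_t(s)\right).
\ee
Thus $\rho'(s) = \tfrac{\rho(s)}{2(n-1)}\left(\theta_+(\Sigma_s) + \theta_-(\Sigma_s)\right)$, and it suffices to prove that $\theta_+(\Sigma_s) > 0$ and $\theta_-(\Sigma_s) > 0$ for every symmetry sphere lying strictly outside the outermost apparent horizon.

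The key step is the trapping argument. Suppose for contradiction that some symmetry sphere $\Sigma_{s_0}$ outside the outermost horizon has, say, $\theta_+(\Sigma_{s_0}) \le 0$. Asymptotic flatness \eqref{defn-asym-flat}--\eqref{[[[[} forces $\rho'(s) \to 1$ and $k_t(s) \to 0$ as $s \to \infty$, hence $\theta_\pm(\Sigma_s) \to n-1 > 0$ for large $s$; so there is an outermost symmetry sphere $\Sigma_{s_1}$ with $s_1 \ge s_0$ at which $\theta_+$ vanishes (or, in the degenerate case $\theta_+ < 0$ persisting, one can still extract a barrier). By spherical symmetry $\Sigma_{s_1}$ is then a (future) apparent horizon, and it encloses $\Sigma_{s_0}$, contradicting the assumption that we are outside the \emph{outermost} apparent horizon — unless $s_1$ coincides with the outermost horizon itself, in which case $\Sigma_{s_0}$ was not strictly outside it. The same argument applied to $\theta_-$ handles past apparent horizons. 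Therefore both $\theta_+$ and $\theta_-$ are strictly positive on symmetry spheres outside the outermost horizon, giving $\rho' > 0$ there, and since area is $\omega_{n-1}\rho^{n-1}$ this yields strict monotonicity of the area. Alternatively, and perhaps more cleanly, one can cite \cite[Section 4]{Mars}, where exactly this analysis of trapped symmetry spheres in spherically symmetric data is carried out; I would follow that argument.

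The main obstacle is making the ``outermost'' bookkeeping precise: one must be careful that a symmetry sphere with a vanishing \emph{or negative} null expansion genuinely produces an apparent horizon enclosing the given sphere, handling the possibility that $\theta_+$ or $\theta_-$ changes sign several times between $\Sigma_{s_0}$ and infinity, and that the constructed horizon is distinct from (and encloses) the hypothesized outermost one. This requires knowing that outermost apparent horizons in this setting are themselves symmetry spheres, which is supplied by Lemma~\ref{sphericalsymmetricAH}, together with the standard fact (as in \cite{Andersson-Eichmair-Metzger}) that the trapped region is bounded by the outermost horizon; modulo invoking those results the argument is routine. A secondary technical point is justifying $\rho' \to 1$ and $k_t \to 0$ from the stated decay rates, which is immediate from \eqref{defn-asym-flat} since $g_{11} \to 1$ and $k_{ij}$ decays like $|x|^{-(n-1)}$.
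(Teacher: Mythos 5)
Your proof is correct and follows essentially the same approach as the paper: compute the null expansions of the symmetry spheres using the decomposition \eqref{g-sph-sym}, show both $\theta_{\pm}$ are positive outside the outermost horizon by moving inward from infinity and invoking Lemma~\ref{sphericalsymmetricAH} to rule out any enclosing horizon, and conclude from $\theta_+ + \theta_- = 4\rho'/\rho > 0$ (in arclength gauge) that $\rho$ is strictly increasing. The paper's version is more terse about the ``moving inwards from infinity'' step, while you spell out the trapping contradiction explicitly, but the underlying argument and the appeal to \cite[Section 4]{Mars} are the same.
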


\begin{proof}
Let $S_r$ denote the level sets of $r$. Since
\be \label{k-sph-sym-2}
k=k_n g_{11} dr^2 +k_t \rho^2 g_{S^{n-1}}
\ee
the null expansions (null mean curvatures) are given by
\be
\theta_{\pm}=H\pm \text{Tr}_{S_r}k=(n-1)\left(\sqrt{g^{11}}\frac{\partial_r \rho}{\rho}\pm k_t\right).
\ee
Since the null expansions are both positive near infinity, as the mean curvature $H$ dominates $\text{Tr}_{S_r}k$ according to decay rates, when moving inwards from infinity they must remain positive outside of the outermost apparent horizon where $\theta_+=0$ or $\theta_-=0$. Note that here we are using Lemma \ref{sphericalsymmetricAH} which asserts that the outermost apparent horizon (if present) is one of the spheres $S_{r_0}$. Therefore
\be \label{eq-no-horiz}
\theta_{\pm} >0 \quad\text{ for }\quad r>r_0.
\ee
To finish the proof simply add the two null expansions, and observe that since both are positive we obtain
\be
0<\theta_+ + \theta_-=4\sqrt{g^{11}}\frac{\partial_r\rho}{\rho}\quad\text{ for }\quad r>r_0.
\ee
Hence, the warping function $\rho$ is an increasing function.
\end{proof}


\subsection{The Spacetime Penrose Inequality in All Dimensions}
\label{7890}

The spherically symmetric Penrose inequality without the maximal assumption was established in dimension $n=3$ in \cite{Hayward}, although the case of equality was not treated. A similar result is stated in \cite{Hidayet-et-al} for all dimensions, but the hypotheses are too strong for our purposes and they also do not address the case of equality. The full result including the case of equality was given in \cite{Bray-KhuriDCDS} for $n=3$, and here we easily extend it to all dimensions.

\begin{thm}\label{Penrose}
Let $(M^n,g,k)$, $n\geq 3$ be an asymptotically flat spherically symmetric initial data set satisfying the dominant energy condition, and let $\mathcal{A}_0$ denote the area of the outermost apparent horizon. Then
\be
m\geq \frac{1}{2}\left(\frac{\mathcal{A}_0}{\omega_{n-1}}\right)^{\frac{n-2}{n-1}}
\ee
and equality holds if and only if the initial data outside the outermost apparent horizon arise from an embedding into the Schwarzschild spacetime. In particular, for a sequence of initial data with $m_j\rightarrow 0$ we have $\mathcal{A}_j\rightarrow 0$.
\end{thm}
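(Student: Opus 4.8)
The plan is to reduce the spacetime Penrose inequality in spherical symmetry to a monotonicity argument for a suitably defined Hawking-type mass, running the Geroch/Bray--Khuri argument dimension by dimension. First I would introduce the radial arclength coordinate $s$ so that the Jang-type reduction applies, following \cite{Bray-KhuriDCDS}, and consider the generalized Hawking mass $m(s)$ for the symmetry spheres, which in higher dimensions takes the form $m(s)=\tfrac{\rho^{n-2}(s)}{2}\left(1-\rho'(s)^2+(\text{terms involving }k_t)\right)$; the precise quantity is the one whose value at spatial infinity equals the ADM mass $m$ and whose value at the outermost apparent horizon equals $\tfrac{1}{2}(\mathcal{A}_0/\omega_{n-1})^{(n-2)/(n-1)}$. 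The key point to record is that at an apparent horizon $\theta_+=0$ or $\theta_-=0$, i.e. $\sqrt{g^{11}}\,\partial_r\rho/\rho = \mp k_t$, which forces the bracket in $m(s)$ to equal $1$ there after the appropriate substitution, so that $m$ at the horizon is exactly $\tfrac12(\mathcal{A}_0/\omega_{n-1})^{(n-2)/(n-1)}$.

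The second step is the monotonicity: I would compute $m'(s)$ along the spheres and show, using the constraint equations together with the dominant energy condition $\mu\ge |J|_g$, that $m'(s)\ge 0$ outside the outermost horizon. Here the key inputs are that $\rho$ is strictly increasing outside the horizon (Lemma~\ref{Monotonicity}), and that the dominant energy condition controls the combination of $R_g$, $(\mathrm{Tr}_g k)^2-|k|_g^2$, and $\mathrm{div}_g(k-(\mathrm{Tr}_g k)g)$ that appears in $m'(s)$. In the time-symmetric case this is just \eqref{eqn-hawking-2}, $m'(s)=\tfrac{\rho^{n-1}\rho'}{2(n-1)}R_g\ge 0$; with $k\ne 0$ one gets extra terms which, after integrating the momentum constraint against the appropriate radial weight and applying Cauchy--Schwarz, are dominated by $\mu-|J|_g\ge 0$. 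Integrating $m'\ge 0$ from the horizon $s_0$ to $s=\infty$ gives $m = m(\infty) \ge m(s_0) = \tfrac12(\mathcal{A}_0/\omega_{n-1})^{(n-2)/(n-1)}$, which is the inequality. If there is no horizon then one integrates from $s=0$ (the pole, where $m(0)=0$) and recovers $m\ge 0$, consistent with the positive mass theorem in this symmetric setting.

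For the rigidity statement, equality forces $m'(s)\equiv 0$ on $(s_0,\infty)$, hence every term that was nonnegative must vanish identically: $\mu=|J|_g$, and the Cauchy--Schwarz step must be an equality, which pins down the pointwise relation between $k_n$, $k_t$, $\rho$ and its derivatives. I would then show these ODE constraints have a unique solution given the boundary data at the horizon, namely the spherically symmetric slice of the Schwarzschild spacetime of the corresponding mass; this is the step where one explicitly integrates the resulting first-order system, exactly as in the $n=3$ case of \cite{Bray-KhuriDCDS}, and checks that the reconstructed $(g,k)$ embeds isometrically with the right second fundamental form into Schwarzschild. Finally, the consequence for sequences is immediate: if $m_j\to 0$ then $\tfrac12(\mathcal{A}_j/\omega_{n-1})^{(n-2)/(n-1)}\le m_j\to 0$, so $\mathcal{A}_j\to 0$.

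I expect the main obstacle to be the monotonicity computation when $k\ne 0$: unlike the time-symmetric case, the derivative $m'(s)$ is not simply a multiple of $R_g$, and one must carefully choose the generalized Hawking mass (there is some freedom in how the $k_t$, $\mathrm{Tr}_g k$ terms enter) so that $m'(s)$ organizes into the dominant-energy combination $\mu-|J|_g$ plus a manifest square. Getting the weights right so that the momentum constraint term is absorbed — rather than leaving an uncontrolled sign — is the delicate point; once the correct mass functional is identified, the higher-dimensional case follows from the $n=3$ computation with only bookkeeping changes in the exponents of $\rho$, since nothing in the argument is genuinely dimension-specific.
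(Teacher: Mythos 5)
Your proposal takes a genuinely different route from the paper. The paper's proof (following \cite{Bray-KhuriDCDS}) first passes to the \emph{generalized Jang deformation} $\bar g = g + \phi^2\,df^2$ with the specific choice $\phi=\rho_{\bar s}$, and then runs the monotonicity of the purely \emph{Riemannian} Hawking/Misner--Sharp mass $\bar m(\bar s)=\tfrac12\rho^{n-2}(1-\rho_{\bar s}^2)$ of $\bar g$, using the Schoen--Yau identity to see that $\bar R$ is nonnegative modulo a divergence that is integrated away. The endpoint values are controlled because the Jang deformation preserves the ADM mass and the horizon area, and because $\rho_{\bar s}\to 0$ along the cylindrical end. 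You instead propose to work with a \emph{spacetime} Hawking mass defined directly on $(g,k)$ (the functional with $H^2-(\mathrm{Tr}_{\Sigma}k)^2$ in the bracket, as in Hayward for $n=3$) and to prove its monotonicity directly from the DEC, bypassing the Jang PDE entirely. Both routes exist in the literature: the direct one is Hayward's (for $n=3$, inequality only), the Jang one is Bray--Khuri's. Your endpoint identifications are correct in both cases, since at $\theta_+\theta_-=0$ the spacetime bracket is $1$ and at the Jang cylinder $\rho_{\bar s}\to 0$, yielding the same value $\tfrac12\rho^{n-2}$. And the deduction $m_j\to 0 \Rightarrow \mathcal A_j\to 0$ from the inequality is immediate.

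However there is a genuine gap, and you flag it yourself: you never establish the monotonicity $m'(s)\geq 0$ for the spacetime functional when $k\neq 0$. Saying the momentum constraint terms are ``absorbed by Cauchy--Schwarz into $\mu-|J|_g$'' and that the higher-dimensional case follows ``with only bookkeeping changes in the exponents of $\rho$'' is precisely the computation the argument hinges on, and it is not obviously just bookkeeping --- the competition between the radial weight $\rho^{n-1}\rho'$, the normal and tangential parts $k_n,k_t$, and $\mathrm{Tr}_g k$ in the constraints has to close pointwise. This is exactly what the paper avoids having to re-derive by passing to the Jang metric: the positivity of $\bar R$ modulo divergence is already packaged in \eqref{Jangscalar} (and its generalized version), and the monotonicity of $\bar m$ then reduces to \eqref{eqn-hawking-2} verbatim. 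There is also an internal inconsistency in your plan: you invoke ``the Jang-type reduction'' and for rigidity defer to integrating the first-order system ``exactly as in \cite{Bray-KhuriDCDS}'', but Bray--Khuri's rigidity analysis lives on the Jang side (vanishing of $|h-k|$, $|q|$, $\mu-J(w)$), which does not transplant directly to the pointwise vanishing conditions you would obtain from a direct spacetime-Hawking computation. You would need to either stay fully within the direct framework (in which case you must derive and solve the Euler--Lagrange-type first-order system that equality forces on $(g_{11},\rho,k_n,k_t)$ from scratch), or switch to the Jang formulation from the start and use the Riemannian Hawking mass of $\bar g$, as the paper actually does.
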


\begin{proof}
We will follow and generalize the arguments of \cite{Bray-KhuriDCDS} to higher dimensions. The proof is based on the generalized Jang equation introduced in \cite{Bray-KhuriDCDS}. The corresponding Jang deformation is similar to that of the original with the addition of an extra function $\phi$ that plays the role of warping factor for embeddings into a static spacetime, namely $\bar{g}=g+\phi^2 df^2$ where $f$ satisfies equation (3) of \cite{Bray-KhuriDCDS} for a canonical choice of $\phi$.  The scalar curvature of the spherically symmetric generalized Jang metric
\be
\bar{g}=d\bar{s}^{2}+\rho^{2}(\bar{s})g_{S^{n-1}}
\ee
is given by
\be \label{ytr1}
\overline{R}=(n-1)\rho^{-2}[(n-2)(1-\rho_{\bar{s}}^2)-2\rho\rho_{\bar{s}\bar{s}}].
\ee
Here $\bar{s}$ denotes radial distance from the boundary so that $\bar{s}=0$ corresponds to the outermost apparent horizon. The fact that the outermost apparent horizon is a level set of $\bar{s}$ is a consequence of Lemma \ref{sphericalsymmetricAH}.

By comparing arbitrary spherically symmetric metrics to that of Schwarzschild we may derive and generalize the Hawking mass (Misner-Sharp mass in spherical symmetry \cite{MisnerSharp}) to higher dimensions
\be\label{Hawking Mass}
\bar{m}(\bar{s}):=\frac{1}{2}\rho^{n-2}(1-\rho_{\bar{s}}^2)
=\frac{1}{2}\left(\frac{A(\bar{s})}{\omega_{n-1}}\right)^{\frac{n-2}{n-1}}
\left[1-\frac{1}{(n-1)^2 \omega_{n-1}^{\frac{2}{n-1}}A(\bar{s})^{\frac{n-3}{n-1}}}
\int_{S_{\bar{s}}}\bar{H}^2\right],
\ee
where
\be
A(\bar{s})=\omega_{n-1}\rho^{n-1}(\bar{s}),\quad\quad\quad \bar{H}=(n-1)\frac{\rho_{\bar{s}}}{\rho}.
\ee
A direct computation yields
\be
2\bar{m}_{\bar{s}}=\frac{1}{n-1}\rho_{\bar{s}} \rho^{n-1}\bar{R}.
\ee
Therefore integrating produces
\be\label{9090}
\bar{m}(\infty)-\bar{m}(0)=\int_{0}^{\infty}
\frac{\rho_{\bar{s}}\rho^{n-1}}{2(n-1)}\bar{R}d\bar{s}
=\frac{1}{2(n-1)\omega_{n-1}}\int_{\bar{M}^n}\rho_{\bar{s}} \bar{R} dV_{\bar{g}}.
\ee

We may now choose $\phi=\rho_{\bar{s}}$ and follow the arguments in \cite[page 750]{Bray-KhuriDCDS}. This allows one to integrate away the divergence term
appearing in $\bar{R}$, leaving only nonnegative terms on the right-hand side
of \eqref{9090}. It follows that $\bar{m}(\infty)\geq \bar{m}(0)$,
and this gives the desired inequality
\be
m\geq \frac{\rho^{n-2}(0)}{2}
=\frac{1}{2}\left(\frac{\mathcal{A}_0}{\omega_{n-1}}\right)^{\frac{n-2}{n-1}}
\ee
since the ADM mass of the Jang metric agrees with that of the given initial data in addition to the fact that the area of the apparent horizon agrees in both metrics as well. The case of equality follows directly from the arguments of \cite{Bray-KhuriDCDS}.
\end{proof}

\subsection{Decay of the Second Fundamental Form}

We now prove that under the definition of asymptotic flatness for spherical symmetry given in Section \ref{sec1}, the ADM linear momentum vanishes $|P|=0$, and hence the ADM mass coincides with the ADM energy $m=E$.

\begin{prop}\label{nbvc}
Under the asymptotic decay conditions \eqref{defn-asym-flat} and \eqref{[[[[}, a spherically symmetric initial data set $(M^n,g,k)$ satisfies the stronger decay
\begin{equation}
|k|_g=O\left(\frac{1}{|x|^n}\right).
\end{equation}
In particular, the ADM linear momentum vanishes $|P|=0$ and the ADM mass agrees with the ADM energy $m=E$.
\end{prop}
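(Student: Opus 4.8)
The plan is to promote the a priori decay $|k|_g=O(|x|^{-(n-1)})$ to $|k|_g=O(|x|^{-n})$ by feeding the Hamiltonian and momentum constraints into an ODE for the radial profile of $k$, and then to read off $|P|=0$. Write $k$ in the spherically symmetric form \eqref{g-sph-sym}--\eqref{k-sph-sym-2}. Since $(M^n,g)$ is asymptotically flat we have $g_{11}\to 1$, $\rho'\to 1$ and $\rho(r)/r\to 1$, so in coordinates in which $g$ has the form \eqref{g-sph-sym} it suffices to estimate the radial functions $k_n,k_t$ as functions of $r\sim|x|$. From \eqref{defn-asym-flat} and \eqref{[[[[} we have $k_n,k_t=O(r^{-(n-1)})$ together with the stronger bound $k_n+(n-1)k_t=\text{Tr}_gk=O(r^{-n})$; since $k_n=\text{Tr}_gk-(n-1)k_t$ and $|k|_g^2=k_n^2+(n-1)k_t^2$, the first claim of the proposition reduces to the single estimate $k_t=O(r^{-n})$.

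First I would extract what the constraints give. From $16\pi\mu=R_g+(\text{Tr}_gk)^2-|k|_g^2$ together with $\mu\geq|J|_g\geq 0$ one gets $|k|_g^2\leq R_g+(\text{Tr}_gk)^2\leq|R_g|+(\text{Tr}_gk)^2=O(r^{-(n+1)})$, hence $0\leq 16\pi\mu\leq|R_g|+(\text{Tr}_gk)^2=O(r^{-(n+1)})$ and therefore $|J|_g=O(r^{-(n+1)})$. Next I would reduce the momentum constraint $8\pi J=\text{div}_g(k-(\text{Tr}_gk)g)$ in spherical symmetry: a spherically symmetric $1$-form on $M^n$ is radial, so the angular components of the right-hand side vanish identically, and a direct computation of the covariant divergence for the metric \eqref{g-sph-sym} gives the radial component as the first order ODE
\[
 k_t'-\frac{\rho'}{\rho}\,(k_n-k_t)=-\frac{8\pi}{n-1}\,J_r ,
\]
where $J_r$ is the radial component of $J$ and $|J_r|=\sqrt{g_{11}}\,|J|_g=O(r^{-(n+1)})$. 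Substituting $k_n=\text{Tr}_gk-(n-1)k_t$ and using $\rho^n$ as an integrating factor, this becomes
\[
 \frac{d}{dr}\!\left(\rho^n k_t\right)=\rho^{\,n-1}\rho'\,\text{Tr}_gk-\frac{8\pi\rho^n}{n-1}\,J_r .
\]

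I would then integrate this identity outward from a fixed large radius. Each term on the right is $O(r^{-1})$ — from $\rho^{n-1}\rho'\,\text{Tr}_gk=O(r^{n-1})O(r^{-n})$ and $\rho^nJ_r=O(r^n)O(r^{-(n+1)})$ — so the crude estimate at once yields $\rho^nk_t=O(\log r)$, i.e. $k_t=O(r^{-n}\log r)$. Squeezing out the genuine $O(r^{-n})$ bound is the step I expect to be the main obstacle, the right-hand side sitting exactly on the borderline of integrability; the way I would try to remove the logarithm is to write $\rho^{n-1}\rho'\,\text{Tr}_gk=\tfrac1n(\rho^n)'\,\text{Tr}_gk$ and integrate by parts, trading it for $\rho^n(\text{Tr}_gk)'$ plus boundary terms, and then to use the momentum constraint once more (with the sharper $O(r^{-(n+1)})$ bound on $J_r$) to see that the resulting integrand is genuinely integrable at infinity. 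In any event $k_t=O(r^{-n})$ — and even $k_t=O(r^{-n}\log r)$ would suffice for all that follows — whence $k_n=\text{Tr}_gk-(n-1)k_t=O(r^{-n})$ and $|k|_g=O(r^{-n})$.

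Finally I would deduce $|P|=0$. Passing to coordinates $x$ adapted to the symmetry, so that $r=|x|$ and $g,k$ have the form \eqref{g-sph-sym}, the integrand appearing in the ADM linear momentum is
\[
 \big(k_{ij}-(\text{Tr}_gk)\,g_{ij}\big)\nu^j=-(n-1)\sqrt{g_{11}}\;k_t\,\frac{x_i}{|x|},
\]
which is an odd function on each coordinate sphere $S_r$, so $\int_{S_r}(k_{ij}-(\text{Tr}_gk)g_{ij})\nu^j\,dA=0$ for every large $r$ and hence $P_i=0$; equivalently, once $|k|_g=O(|x|^{-n})$ is established this surface integral is $O(r^{-n})\cdot O(r^{n-1})=O(r^{-1})\to 0$. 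Either way $|P|=0$, and therefore $m=\sqrt{E^2-|P|^2}=E$.
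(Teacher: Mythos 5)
Your overall strategy matches the paper's: decompose $k$ into $(k_n,k_t)$, reduce the claim to decay of $k_t$ via the trace, derive a first-order ODE for $k_t$ from the momentum constraint, and integrate with an integrating factor. Your $|P|=0$ step via oddness of the integrand $-(n-1)k_t\,n_i$ on each sphere is correct and in fact cleaner than the paper's (which instead deduces $|P|=0$ from the improved decay); it requires no decay improvement at all.

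The substantive point is the ODE coefficient, and here you and the paper genuinely disagree. You obtain $k_t'+\tfrac{n\rho'}{\rho}k_t=\tfrac{\rho'}{\rho}\text{Tr}_g k-\tfrac{8\pi}{n-1}J(\partial_s)$ after eliminating $k_n$, with integrating factor $\rho^n$; the paper arrives at $k_t'+\tfrac{2(n-1)r'}{r}k_t=\mathcal{K}$ and uses $r^{2(n-1)}$. Your coefficient is the correct one: for $g=ds^2+\rho^2 g_{S^{n-1}}$ the second fundamental form of a symmetry sphere is $\nabla_a n_b=\rho\rho'(g_{S^{n-1}})_{ab}=\tfrac{\rho'}{\rho}(g_{ab}-n_an_b)$, whose trace is the mean curvature $(n-1)\tfrac{\rho'}{\rho}$; the paper's formula $\nabla_a n_b=2rr'(g_{S^{n-1}})_{ab}$ carries an extra factor of $2$, and tracking that factor is exactly what inflates the coefficient from $n$ to $2(n-1)$. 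This discrepancy is not innocuous: with $r^{2(n-1)}$ (for $n>2$) the right-hand side $r^{2(n-1)}\mathcal{K}=O(s^{n-3})$ integrates to $O(s^{n-2})$, safely off the borderline, whereas with the correct $\rho^n$ one lands exactly at $\rho^n\mathcal{K}=O(s^{-1})$, which is the source of the logarithm you flagged.

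Your proposed integration-by-parts repair does not close the gap, and in fact degrades the estimate. Writing $\rho^{n-1}\rho'\,\text{Tr}_g k=\tfrac{1}{n}(\rho^n)'\,\text{Tr}_g k$ and integrating by parts trades this term for $\rho^n(\text{Tr}_g k)'$ plus a bounded boundary term. But the hypotheses \eqref{defn-asym-flat}--\eqref{[[[[} control $k$ only up to one derivative, giving $(\text{Tr}_g k)'=O(s^{-n})$ with no room to spare; hence $\rho^n(\text{Tr}_g k)'=O(1)$, which integrates to $O(s)$ and, after dividing by $\rho^n$, yields only $k_t=O(s^{1-n})$. Similarly $|J(\partial_s)|\le\mu\le\tfrac{1}{16\pi}\bigl(R_g+(\text{Tr}_g k)^2\bigr)=O(s^{-(n+1)})$ is exactly borderline, and using the momentum constraint ``once more'' does not supply extra decay since $J$ is determined by $k$. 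So with the corrected ODE the argument as it stands produces $k_t=O(s^{-n}\log s)$, not $O(s^{-n})$, and you were right to be uneasy about this step. As you note, the weaker bound still gives the parts of the proposition that are actually used in the rest of the paper, namely $|P|=0$ and $m=E$.
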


\begin{proof}
Recall that the spherically symmetric initial data $(M^n,g,k)$ may be expressed by
\begin{equation}
g=ds^2+r(s)^2 g_{S^{n-1}},\quad\quad\quad\quad
k_{ab}= n_a n_b k_n +(g_{ab}-n_a n_b)k_t,
\end{equation}
where $n=\partial_s$.
Consider the divergence constraint
\begin{equation}
J=\operatorname{div}_g\left(k-(\text{Tr}_g k)g\right).
\end{equation}
Observe that
\begin{equation}
\nabla_a n_b=\langle\nabla_a n,\partial_b\rangle=2r r'\left(g_{S^{n-1}}\right)_{ab},
\end{equation}
and therefore
\begin{equation}
\nabla_a k_{ab}=2r r'\left[\left(g_{S^{n-1}}\right)_{aa}
(k_n - k_t) n_b +\left(g_{S^{n-1}}\right)_{ab}(k_n -k_t)n_a\right]
+n_a n_b (\partial_a k_n -\partial_a k_t).
\end{equation}
It follows that
\begin{equation}
\left(\operatorname{div}_g k\right)(\partial_b)=
(k_n'-k_t')n_b
+\frac{2(n-1) r'}{r}(k_n-k_t)n_b.
\end{equation}
Furthermore
\begin{equation}
\operatorname{div}_g\left((\text{Tr}_gk)g\right)
(\partial_b)=\partial_b \text{Tr}_gk=\partial_b k_n+(n-1)\partial_b k_t,
\end{equation}
and hence
\begin{equation}
J(\partial_b)=
(k_n'-k_t')n_b
+\frac{2(n-1) r'}{r}(k_n-k_t)n_b
-\partial_b k_n-(n-1)\partial_b k_t.
\end{equation}
The only nonzero component is in the $\partial_s$ direction, and from this we find that
\begin{equation}
k_t'+\frac{2(n-1)r'}{nr}k_t
=\frac{1}{n}J(\partial_s)
+\frac{2(n-1)r'}{nr}k_n.
\end{equation}
Since
\begin{equation}
\text{Tr}_g k=k_n+(n-1)k_t
\end{equation}
this may be rewritten as
\begin{equation}\label{45678}
k_t'+\frac{2(n-1)r'}{r}k_t
=\frac{1}{n}J(\partial_s)
+\frac{2(n-1)r'}{nr}\text{Tr}_g k=:\mathcal{K}.
\end{equation}

A priori the assumed decay for $k$ leads to $k_t=O(s^{1-n})$. However the ODE \eqref{45678} shows that the fall-off is stronger. Indeed, from the dominant energy condition and the assumed decay
\begin{equation}\label{5555}
\text{Tr}_g k=O\left(\frac{1}{s^n}\right),\quad\quad
|J(\partial_s)|_g\leq|J|_g\leq\mu
=\frac{1}{16\pi}\left(R_g+(\text{Tr}_g k)^2 -|k|_g^2\right)=O\left(\frac{1}{s^{n+1}}\right),
\end{equation}
and thus
\begin{equation}\label{0000}
k_t=\frac{1}{r^{2(n-1)}}\left[\int_{s_0}^s r^{2(n-1)}\mathcal{K}ds+C\right]
=O\left(\frac{1}{s^n}\right).
\end{equation}
It then follows from the trace decay in \eqref{5555} that $k_n$ also satisfies
the fall-off in \eqref{0000}, and hence
\begin{equation}
|k|_g=O\left(\frac{1}{s^n}\right),
\end{equation}
which implies that the ADM linear momentum vanishes $|P|=0$.
\end{proof}

\section{Solving Jang's Equation to Obtain the Base Manifolds}
\label{sec4}

\subsection{Review of Jang's Equation Without Symmetry}
Given an initial data set $(M^n,g,k)$, the following quantities may be used to measure how far away it is from being realized as a graph $t=f(x)$ in Minkowski space
\begin{equation}
\bar{g}=g+df^2,\quad\quad\quad\quad
\bar{k}=k-\frac{\nabla_{g}^2 f}{1+|\nabla f|^2_g}.
\end{equation}
In particular, such an embedding exists if and only if $\bar{g}=g_{\mathbb{E}}$ and $\bar{k}=0$. A necessary condition for this to occur is the Jang equation \cite{Andersson-Eichmair-Metzger}
\begin{equation}\label{JangEquation}
\text{Tr}_{\bar{g}}\bar{k}=0\quad\quad\Leftrightarrow\quad\quad
\left(g^{ab}-\frac{f^a f^b}{1+|\nabla f|^2_g}\right)\left(\frac{\nabla_{ab}f}{\sqrt{
	1+|\nabla f|^2_g}}-k_{ab}\right)=0,
\end{equation}
where $f^a=g^{ab}\partial_b f$.
Thus, one may think of Jang's equation as an attempt to find a candidate graph for an embedding into Minkowski space.

Even though such an embedding may not exist for the given initial data, we may use these ideas to construct an isometric embedding into a relevant static spacetime. Namely consider the map
\be
F: (M^n, g) \to ({\mathbb{R}}\times M^n, -dt^2 +  \bar{g})
\ee
defined by
\be
F(x) = (f(x), x).
\ee
Then
\be
F^* ( -dt^2 + \bar{g})= -df^2 + \bar{g}=g,
\ee
and the second fundamental form is
\be
h =  \frac{\nabla^2_{\bar{g}}f}{\sqrt{1-|\nabla f|_{\bar{g}}^2}}=\frac{\nabla_{g}^2 f}{\sqrt{1+|\nabla f|^2_g}}.
\ee

Another motivation for the Jang equation which is pertinent to the positive mass theorem, is to consider it as a method for deforming initial data to obtain weakly nonnegative scalar curvature. In this setting one may view the Jang graph as a submanifold of the $(n+1)$-dimensional dual Riemannian manifold $(\mathbb{R}\times M^n, dt^2+g)$. The Jang metric $\bar{g}$ is then the induced metric on the graph and $h$ is again the second fundamental form. If $k$ is extended trivially off the $t=0$ slice to the whole $(n+1)$-dimensional ambient space, then the Jang equation simply states that the Jang surface $\bar{M}^n$ satisfies the apparent horizon equation
\begin{equation}\label{hgfd}
H_{\bar{M}^n}-\text{Tr}_{\bar{M}^n}k=0,
\end{equation}
where $H_{\bar{M}^n}$ is the mean curvature of the Jang surface.
A computation \cite{SchoenYauII} then shows that the scalar curvature of the Jang metric is nonnegative modulo a divergence term whenever the dominant energy condition is satisfied, that is
\begin{equation}\label{Jangscalar}
R_{\bar{g}}=16\pi(\mu-J(w))+
|h-k|_{\bar{g}}^{2}+2|q|_{\bar{g}}^{2}
-2\mathrm{div}_{\bar{g}}q,
\end{equation}
where
\begin{equation}
w^a=\frac{f^{a}}{\sqrt{1+|\nabla f|_{g}^{2}}},\quad\quad\quad
q_{a}=\frac{f^{b}}{\sqrt{1+|\nabla f|_{g}^{2}}}(h_{ab}-k_{ab}).
\end{equation}
This positivity property for $R_{\bar{g}}$ allows one to conformally transform $\bar{g}$ to zero scalar curvature. Hence through the Jang deformation combined with a conformal transformation, the initial data is taken into the time-symmetric setting.

\subsection{Existence of Solutions to Jang's Equation}
\label{7788}

The Jang deformation preserves uniform asymptotic flatness as will be shown below, and preserves the mass so that $\bar{m}=m$. An interesting feature of the Jang equation's existence theory is its ability to detect apparent horizons. That is, it can only blow-up at apparent horizons in which case it approximates a cylinder over these surfaces \cite{SchoenYauII}. In dimension $n=3$ it has been shown that this cylindrical blow-up behavior can in fact be prescribed at the outermost apparent horizon \cite{Eichmair,HanKhuri,Metzger}. Suppose that the boundary is decomposed into a disjoint union of future ($+$) and past ($-$) apparent horizon components $\partial M^3=\partial_{+}M^3\cup\partial_{-}M^3$, and that there are no other apparent horizons present. If in a neighborhood of $\partial_{\pm} M^3$ there are constants $l\geq 1$ and $c>0$ such that
\begin{equation}
c^{-1}\tau^l \leq\theta_{\pm}(S_{\tau})\leq c\tau^{l},
\end{equation}
where $\tau(x)=\operatorname{dist}(x,\partial M^3)$ and $S_{\tau}$ are surfaces of constant distance to the boundary,
then there exists a smooth solution $f$ of
Jang's equation with the property that
$f(x)\rightarrow\pm\infty$ as $x\rightarrow\partial_{\pm}M^3$. Furthermore, the asymptotics for this blow-up are given by
\begin{align}\label{casymptotics}
\begin{split}
c^{-1}_1\tau^{-\frac{l-1}{2}}+c^{-1}_2 &\leq \pm f
\leq c_1\tau^{-\frac{l-1}{2}}+c_2\quad\quad\textit{ if }\quad\quad
l>1,\\
-c_1^{-1}\log\tau+c_2^{-1} &\leq \pm f
\leq -c_1\log\tau+c_2\quad\quad\textit{ if }\quad\quad l=1,
\end{split}
\end{align}
for some positive constants $c_1$, $c_2$. In the spherically symmetric case, this type of existence result may be established in all dimensions.

\begin{figure}[h] 
\centering
   \includegraphics[width=5in]{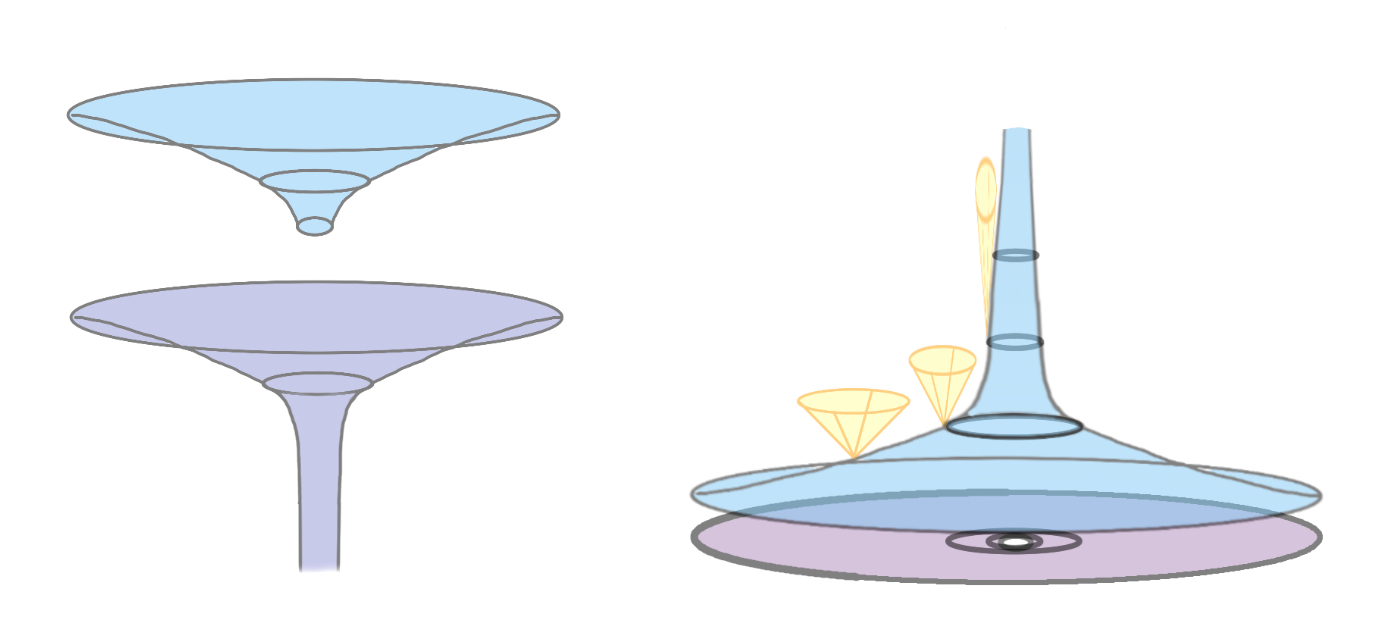}
   \caption{On the left we see the Riemannian Schwarzschild manifold (in blue), and its Jang perturbation (in purple).  On the right we see the Jang perturbation viewed as the base of a static spacetime.  Due to the asymptotically cylindrical end, the light cones of this spacetime are increasingly narrow as we move towards the central cylinder.  Above the base we see the embedding of the Riemannian Schwarzschild space in blue.  The graph of the Jang map has an asymptote at the central cylinder, but it is becoming increasingly null so that the Schwarzschild manifold in blue does not have a cylindrical inner end.}
   \label{jang_blow_up_fig}
\end{figure}

Recall the form of the spherically symmetric initial data
\begin{equation}
g=g_{11}(r)dr^{2}+\rho^{2}(r)g_{S^{n-1}},\text{ }\text{ }\text{
}\text{ }k_{ab}=n_{a}n_{b}k_{n}+(g_{ab}-n_{a}n_{b})k_{t},
\end{equation}
defined on the compliment of a ball
$M^n=\mathbb{R}^{n}\setminus B_0(r_0)$. It is assumed that $\partial M^n=S_{r_0}$ is the only apparent horizon, which means that the null expansions satisfy
\begin{equation}\label{AHC}
\theta_{\pm}(r)=(n-1)\left(\sqrt{g^{11}}\frac{\rho_{r}}{\rho}\pm
k_{t}\right)>0,\text{ }\text{ }\text{ }\text{ }r>r_0,
\end{equation}
and that either
$\theta_{+}(r_0)=0$, $\theta_{-}(r_0)=0$, or
$\theta_{+}(r_0)=\theta_{-}(r_0)=0$ depending on whether $S_{r_0}$ is a
future horizon, past horizon, or both respectively. As observed in \cite{Omurchadha} the Jang equation in spherical symmetry may be reduced to a first order ODE by setting
\begin{equation}
v=\frac{\sqrt{g^{11}}f_{r}}
{\sqrt{1+g^{11}f_{r}^{2}}}.
\end{equation}
The equation \eqref{JangEquation} then becomes
\begin{equation}\label{sphjang}
\sqrt{g^{11}}v_{r}+(n-1)\left(\sqrt{g^{11}}\frac{\rho_{r}}{\rho}v-k_{t}\right)
+(v^{2}-1)k_{n}=0.
\end{equation}
Observe that $|v|\leq 1$ and blow-up occurs precisely when $v=\pm 1$. A maximum principle type argument shows that the outermost horizon condition \eqref{AHC} ensures that $|v|<1$ away from $S_{r_0}$. Building upon this estimate, existence and uniqueness for the spherically symmetric Jang equation may be established following the arguments of \cite[Theorem 2]{Bray-KhuriDCDS}; this prior results was stated for dimension three but the proof carries over to higher dimensions. The result may be stated as follows, under the hypothesis that the initial data satisfy the
following fall-off conditions in the asymptotic end
\begin{align}\label{asymp}
\begin{split}
|k|_{g}=&O_1(r^{1-n}),\quad\quad\quad
\text{Tr}_{g}k=O_1(r^{-n}),\\
g_{11}-1=&O_1(r^{2-n}),\quad\quad\quad
\rho-r=O_2(1).
\end{split}
\end{align}

\begin{thm}\label{jangexistence}
Assume that the initial data set is spherically symmetric, smooth, either complete or with outermost apparent horizon boundary, and satisfies the asymptotics \eqref{asymp}.
Then there exists a unique solution $v\in C^{\infty}((r_0,\infty))\cap
C^{1}([r_0,\infty))$ of \eqref{sphjang} (the spherically symmetric Jang equation) such that $-1<v(r)<1$, $r>r_0$, with
$v(r_0)$ taking the value $0$ or $\pm1$ depending on whether $r_0=0$ and the manifold is complete, or $S_{r_0}$ is a past (future) horizon, respectively.
Furthermore, in the asymptotic end the decay is of the form
\begin{equation}
v=O_2(r^{1-n}),\quad\quad\textit{ as
}\quad\quad r\rightarrow\infty.
\end{equation}
In all cases this gives rise to a spherically symmetric solution $f\in C^{\infty}(M^n)$ of the Jang equation \eqref{JangEquation} satisfying
\begin{equation}
f=O_3(r^{2-n}),\quad\quad\textit{ as
}\quad\quad r\rightarrow\infty.
\end{equation}
\end{thm}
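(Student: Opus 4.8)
The plan is to follow the strategy of \cite[Theorem 2]{Bray-KhuriDCDS}, where the corresponding statement was established for $n=3$; the dimension will enter only through numerical constants. Multiplying \eqref{sphjang} by $\sqrt{g_{11}}=1/\sqrt{g^{11}}$ puts the equation in the normal form $v_r=F(r,v)$, where $F$ is quadratic in $v$ with coefficients that are smooth in $r$ on $(r_0,\infty)$ and, in the horizon case, smooth up to $r=r_0$. Using $\text{Tr}_g k=k_n+(n-1)k_t$ to absorb the linear term, one obtains the integrating--factor identity
\be\label{jang-IF}
\left(\rho^{n-1}v\right)_r=\rho^{n-1}\sqrt{g_{11}}\left(\text{Tr}_g k-v^2 k_n\right),
\ee
which will be the main tool both for excluding interior blow-up and for reading off the asymptotics of $v$.

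First I would prove the a priori bound $-1<v<1$ on $(r_0,\infty)$ by a barrier (maximum principle) argument. Evaluating the right-hand side of the normal form on the constant barriers gives $F(r,1)=-\sqrt{g_{11}}\,\theta_-(r)$ and $F(r,-1)=\sqrt{g_{11}}\,\theta_+(r)$, and by the outermost horizon condition \eqref{AHC} these are, respectively, negative and positive for every $r>r_0$. Thus along the flow in the direction of increasing $r$ the vector field points strictly into the strip $\{-1<v<1\}$ at every point of its boundary lying over $(r_0,\infty)$, so this strip is forward invariant; in particular a solution interior to it at one value of $r$ can never reach $v=\pm1$, which is exactly the locus where $f$ blows up. Combined with local boundedness of the coefficients on compact subsets of $[r_0,\infty)$, this bound also yields global existence of the solution on all of $(r_0,\infty)$.

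Next I would construct the distinguished solution realizing the prescribed boundary value. In the horizon case, say $\theta_-(r_0)=0$, the coefficients of $F$ are smooth up to $r_0$, so the Picard--Lindel\"of theorem gives a unique solution of the initial value problem with $v(r_0)=1$; since $F(r_0,1)=0$ while $F(r,1)=-\sqrt{g_{11}}\,\theta_-(r)<0$ for $r>r_0$, this solution detaches from the level $v=1$ immediately and enters $\{v<1\}$, after which Step one keeps $-1<v<1$ for all $r>r_0$. In the complete case $r_0=0$, the term $(n-1)\rho_r/\rho$ has a first-order pole at the origin (a regular singular point, with $\rho(0)=0$ and $\rho_r(0)=1$), so ordinary ODE theory does not apply directly; instead I would recast the problem, via \eqref{jang-IF}, as the fixed-point equation $v(r)=\rho^{1-n}\int_0^r\rho^{n-1}\sqrt{g_{11}}\left(\text{Tr}_g k-v^2 k_n\right)ds$ and solve it by a contraction-mapping argument on a short interval $[0,\varepsilon]$, obtaining the unique bounded solution, which automatically satisfies $v(0)=0$ and $v=O(r)$ (the latter ensuring that the $f$ recovered below is smooth, indeed even in $r$, at the pole). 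Uniqueness in the statement then follows from uniqueness of these initial/fixed-point problems together with ODE uniqueness on $(r_0,\infty)$, and the regularity $v\in C^\infty((r_0,\infty))\cap C^1([r_0,\infty))$ follows by bootstrapping the equation.

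For the asymptotics I would use the fall-off \eqref{asymp} in \eqref{jang-IF}: $\sqrt{g_{11}}$ and $\rho/r$ are bounded, $k_n=O(r^{1-n})$, and $\text{Tr}_g k=O(r^{-n})$, so the crude bound $|v|<1$ gives $(\rho^{n-1}v)_r=O(1)$ and hence a first estimate $v=O(r^{2-n})$; feeding this back renders the $v^2 k_n$ term integrable and upgrades the estimate to $v=O_2(r^{1-n})$, the derivative parts coming from the differentiated equation, exactly as in \cite{Bray-KhuriDCDS}. Finally, since $|v|<1$, the substitution $v=\sqrt{g^{11}}f_r/\sqrt{1+g^{11}f_r^2}$ inverts to $f_r=\sqrt{g_{11}}\,v/\sqrt{1-v^2}$; integrating and normalizing $f\to0$ at infinity gives $f\in C^\infty(M^n)$ with $f=O_3(r^{2-n})$, because the integrand inherits the decay of $v$ and $r^{1-n}$ is integrable at infinity for $n\ge3$, while near a horizon boundary the integrand is non-integrable, which produces the cylindrical blow-up $f\to\pm\infty$ there. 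I expect the main obstacle to be this construction step at the degenerate boundary: at the horizon one must verify that the Picard solution genuinely detaches from the tangent barrier $v\equiv\pm1$ (where $F$ vanishes), and at the pole the genuine singularity of the ODE forces the weighted fixed-point argument and a check that the resulting $v$ is odd and smooth enough at $r=0$; by contrast, the a priori bound, global continuation, and decay estimates are comparatively routine once \eqref{jang-IF} and \eqref{AHC} are in hand.
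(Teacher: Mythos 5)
Your proposal follows the same route the paper takes, which is to cite \cite[Theorem 2]{Bray-KhuriDCDS} and note that the three-dimensional argument carries over verbatim; what you have written is essentially a fleshed-out version of that argument. The key algebra is correct: multiplying \eqref{sphjang} by $\sqrt{g_{11}}$ and using $\text{Tr}_g k = k_n + (n-1)k_t$ does give the integrating-factor identity $(\rho^{n-1}v)_r=\rho^{n-1}\sqrt{g_{11}}(\text{Tr}_g k-v^2 k_n)$, and the barrier values $F(r,1)=-\sqrt{g_{11}}\,\theta_-$, $F(r,-1)=\sqrt{g_{11}}\,\theta_+$ are exactly right, so \eqref{AHC} indeed makes the strip $\{-1<v<1\}$ forward invariant and gives global existence with the a priori bound. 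Your handling of the two degenerate endpoints (Picard at the horizon where the coefficients remain smooth in the coordinate $r$ of \eqref{g-sph-sym}, and a weighted fixed-point argument at the regular singular point $r_0=0$) is the right structure, and you are correct that the genuine work is in showing the Picard solution detaches from the tangent barrier $v\equiv\pm1$; that requires examining the first nonvanishing Taylor coefficient of $F(r,\pm1)$ at $r_0$, which has a definite sign by \eqref{AHC} and the horizon degeneracy order, rather than just the informal statement you give. Since the paper does not carry this out either, I regard it as an honest flag rather than a gap.

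One place where I would push back on the bootstrap: with the decay \eqref{asymp}, the source term $\rho^{n-1}\sqrt{g_{11}}\,\text{Tr}_g k$ in your identity is only $O(r^{-1})$, which is borderline non-integrable; feeding the crude estimate $v=O(r^{2-n})$ back in kills the $v^2k_n$ contribution (that becomes $O(r^{4-2n})$, integrable for $n\ge 3$) but does nothing for the $\text{Tr}_g k$ term, so the iteration as you describe it only improves the estimate to $v=O(r^{1-n}\log r)$ rather than $v=O(r^{1-n})$. To recover the sharp rate stated in the theorem you either need the slightly stronger decay $\text{Tr}_g k=O(r^{-n-\varsigma})$ that \cite{Bray-KhuriDCDS} effectively works with, or you should simply state the logarithmic rate; either way this does not affect anything downstream since $df^2$ would contribute at order $r^{2(1-n)}\log^2 r$, which is still subleading relative to $r^{2-n}$, so the Jang deformation still has the same ADM mass.

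Finally, note that the uniqueness asserted in the theorem is uniqueness among solutions with the prescribed boundary value $v(r_0)\in\{0,\pm1\}$; after establishing that the IVP/fixed-point solution takes that value, Picard--Lindel\"of uniqueness near $r_0$ and classical ODE uniqueness on $(r_0,\infty)$ give the claim, exactly as you say. It would be worth remarking that decay at infinity is then automatic from the integrating-factor identity and $|v|<1$, so there is no shooting problem to resolve.
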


\begin{cor}\label{bar-RotSym}
Under the hypotheses of Theorem \ref{jangexistence} the Jang deformed initial data $(\bar{M}^n,\bar{g})$ is smooth, spherically symmetric, and asymptotically flat with the same mass $\bar{m}=m$. If $(M^n,g,k)$ has a boundary then the Jang deformation, in addition, has an asymptotically cylindrical end satisfying the asymptotics \eqref{casymptotics}.
\end{cor}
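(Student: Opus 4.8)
\emph{Setup and the easy assertions.} The plan is to read everything off the reduction to the first-order ODE \eqref{sphjang} furnished by Theorem~\ref{jangexistence}. Since the solution $f$ is radial, the Jang metric keeps the warped-product form
\be
\bar g \;=\; \bar g_{11}(r)\,dr^2 + \rho^2(r)\,g_{S^{n-1}},\qquad
\bar g_{11} \;=\; g_{11}+f'(r)^2 \;=\; \frac{g_{11}}{1-v^2},
\ee
so spherical symmetry is immediate, and smoothness on $\{r>r_0\}$ follows from $f\in C^\infty((r_0,\infty))$ together with $|v|<1$ there. In the complete case $r_0=0$ one has $v(0)=0$, hence $f'(0)=0$; inspecting \eqref{sphjang} near the pole, where $\rho_r/\rho\sim 1/r$, the bounded terms force $v=O(r)$, so $f=O(r^2)$ is a smooth function of $r^2$ and $\bar g$ extends smoothly across the origin, inheriting the regularity of $g$.

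\emph{Asymptotic flatness and equality of masses.} By Theorem~\ref{jangexistence}, $f=O_3(r^{2-n})$, so $f'^2=O(r^{2-2n})=o(r^{2-n})$. Therefore $\bar g_{11}-1=(g_{11}-1)+f'^2=O_1(r^{2-n})$ has precisely the same leading asymptotics as $g_{11}-1$, while $\rho$ is untouched; this yields asymptotic flatness of $(\bar M^n,\bar g)$ with the same constants. The ADM energy integral only feels the $O(r^{2-n})$ part of the metric: the contribution of $f'^2$ to $\int_{S_r}(\bar g_{ij,i}-\bar g_{ii,j})\nu^j$ is $O(r^{n-1}\cdot r^{1-2n})=O(r^{-n})\to 0$, so $\bar E=E$, and hence $\bar m=\bar E=E=m$ by Proposition~\ref{nbvc}. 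Equivalently, the Misner--Sharp masses $\tfrac12\rho^{n-2}(1-\rho_r^2/\bar g_{11})$ and $\tfrac12\rho^{n-2}(1-\rho_r^2/g_{11})$ differ by $O(r^{-n})$ at infinity.

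\emph{The cylindrical end, and the main obstacle.} When $S_{r_0}$ is an apparent horizon one has $v(r_0)=\pm1$, so $g^{11}f'^2=v^2/(1-v^2)\to\infty$ and $\bar g_{11}=g_{11}/(1-v^2)\to\infty$ as $r\to r_0$. Set $\tau=r-r_0$ and write $v=\pm(1-w)$ with $w\to0^+$; linearizing \eqref{sphjang}, the term $(v^2-1)k_n$ is $O(w)$ and $(n-1)\big(\sqrt{g^{11}}\tfrac{\rho_r}{\rho}\,v-k_t\big)$ reduces, at leading order, to the null expansion $\theta_{\mp}(r)$ that vanishes at $r_0$, giving $\sqrt{g^{11}}\,w'\approx\theta_{\mp}(r)$ after rearrangement. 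If $\theta_{\mp}$ vanishes to order $l$ at $r_0$ — as it does for the data under consideration — then $w\sim\tau^{\,l+1}$, hence $1-v^2\sim\tau^{\,l+1}$, $\sqrt{\bar g_{11}}\sim\tau^{-(l+1)/2}$, and $|f'|=(g^{11})^{-1/2}\,|v|\,(1-v^2)^{-1/2}\sim\tau^{-(l+1)/2}$. Integrating in $r$: the $\bar g$-distance to $S_{r_0}$, namely $\int_{r_0}\sqrt{\bar g_{11}}\,dr$, diverges (logarithmically if $l=1$, like $\tau^{-(l-1)/2}$ if $l>1$), while $\rho\to\rho(r_0)>0$ with $\rho_{\bar s}\to0$, so the end is asymptotically cylindrical; the same integrations give $\pm f$ the two alternatives in \eqref{casymptotics}. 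The substantive difficulty is exactly this near-horizon matching: controlling the rate at which $v\to\pm1$ from the ODE is the spherically symmetric, one-dimensional counterpart of the cylindrical blow-up analysis of \cite{Eichmair,HanKhuri,Metzger}, and is carried out following \cite[Theorem~2]{Bray-KhuriDCDS}; the remaining assertions, and the smoothness check at the pole, are routine.
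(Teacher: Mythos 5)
The paper states Corollary~\ref{bar-RotSym} without an explicit proof, treating it as immediate from Theorem~\ref{jangexistence} together with known facts about the Jang deformation (Schoen--Yau). Your write-up supplies the missing details and is essentially correct, following the route the paper implicitly intends: read everything off the first-order ODE \eqref{sphjang} and the decay rates already stated in the theorem. A few remarks on each part.

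Spherical symmetry and smoothness are correct, though slightly over-argued: Theorem~\ref{jangexistence} already asserts $f\in C^\infty(M^n)$ in all cases, so the $v=O(r)$ analysis near the pole, while a fine sanity check, is not logically needed. Your equality $\bar g_{11}=g_{11}/(1-v^2)$ is the right way to organize the warped-product form and is used again below.

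For asymptotic flatness and $\bar m=m$ your order counting is correct: $f=O_3(r^{2-n})$ gives $f_if_j=O(r^{2-2n})$, whose derivative contributes $O(r^{1-2n})$ to the ADM integrand and hence $O(r^{-n})\to0$ to the flux integral over $S_r$. This is a direct verification of a fact the paper takes from Schoen--Yau; both are legitimate, and your version has the advantage of being self-contained. Note you invoke Proposition~\ref{nbvc} to pass from $\bar E=E$ to $\bar m=m$, which is exactly what the hypotheses supply here.

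For the cylindrical end, the substitution $v=\pm(1-w)$ and the resulting relation $\sqrt{g^{11}}\,w_r=\theta_\mp(r)+O(w)$ is the right linearization; the competing $O(w)$ terms (from $(n-1)\sqrt{g^{11}}\tfrac{\rho_r}{\rho}w$ and $wk_n$) are indeed subleading once one posits $w\sim\tau^{l+1}$, so the self-consistency argument closes. From $1-v^2\sim\tau^{l+1}$ you correctly obtain $\sqrt{\bar g_{11}}\sim\tau^{-(l+1)/2}$ and hence the $\bar s\to\infty$, $\rho\to\rho(r_0)>0$, $\rho_{\bar s}\to0$ asymptotics of a cylinder, and the integrations for $f$ reproduce the two alternatives in \eqref{casymptotics}. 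One caveat worth stating explicitly: the two-sided bounds in \eqref{casymptotics} require the two-sided hypothesis $c^{-1}\tau^l\le\theta_\mp\le c\tau^l$; you allude to this (``as it does for the data under consideration''), and you are right that this is implicit in the corollary since $l$ only makes sense under that hypothesis. Also, your $\tau=r-r_0$ differs from the paper's $g$-distance $\tau$ by a smooth bounded positive factor near $S_{r_0}$, which changes neither the power $l$ nor the asymptotic rates, so the identification is harmless but deserves a sentence. Your closing appeal to \cite[Theorem~2]{Bray-KhuriDCDS} for the rigorous ODE estimate matching the heuristic linearization is the right reference.
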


\subsection{Uniform Asymptotics for the Solution of Jang's Equation}

The asymptotic fall-off for solutions of Jang's equation given in the previous theorem depend on spherical symmetry and are not necessarily uniform. By allowing for a slightly weaker fall-off we may obtain uniform fall-off in the general case independent of any symmetry.

\begin{lem}\label{Unf-Asym-Flat for bar}
Let $(M^n_j,g_j,k_j)$ be a sequence of uniformly asymptotically flat initial
data, and let $f_j\in C^{\infty}(M^n)$ be corresponding solutions of Jang's equation with $f_j(x)\rightarrow 0$ as $|x|\rightarrow\infty$ where $x\in\mathbb{R}^n\setminus B_{\rho_A}$ are coordinates given by the asymptotic diffeomorphisms. Then for any small $\varsigma>0$ there exist unform constants $\mathcal{C}$ and $\bar{r}$, depending only on $\varsigma$, such that
\begin{equation}\label{hhhh}
|\partial^{\beta}f(x)|\leq \frac{\mathcal{C}}{|x|^{n-2-\varsigma+|\beta|}}\quad\quad\textit{ for
}\quad\quad|x|\geq \bar{r},
\end{equation}
where $\bar{r}>\rho_A$. In particular, the sequence of Jang deformations $(\bar{M}^n_j,\bar{g}_j)$ is uniformly asymptotically flat.
\end{lem}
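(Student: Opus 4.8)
The plan is to treat Jang's equation \eqref{JangEquation} as a quasilinear elliptic equation for $f_j$ on the asymptotic region $\mathbb{R}^n\setminus B_{\bar r}$, and to run a standard barrier-plus-Schauder bootstrap, being careful that every constant produced depends only on the uniform asymptotic flatness data ($\rho_0$, $C$) and on $\varsigma$, not on $j$. First I would rewrite \eqref{JangEquation} in the divergence-type form used in the existence theory, namely
\be
\divergence_g\!\left(\frac{\nabla f}{\sqrt{1+|\nabla f|_g^2}}\right) = \tr_g k - \frac{k(\nabla f,\nabla f)}{1+|\nabla f|_g^2},
\ee
so that $f_j$ solves $\operatorname{div}_{g_j}(\mathbf{a}_j(\nabla f_j)) = b_j$ with $|b_j| = |\tr_{g_j}k_j - \cdots| \leq C'|x|^{-n}$ by \eqref{[[[[} and \eqref{defn-asym-flat}, the bound being $j$-independent. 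The operator $\mathbf{a}_j$ is uniformly elliptic wherever $|\nabla f_j|$ is bounded, with ellipticity constants controlled by the uniform $C^0$ and $C^1$ bounds on $g_{ij}-\delta_{ij}$.

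The second step is the $C^0$ and gradient decay. Because $f_j(x)\to 0$ at infinity and the right-hand side decays like $|x|^{-n}$, I would construct radial supersolutions and subsolutions of the form $\pm\mathcal{C}|x|^{2-n-\varsigma}$ (the slightly weakened exponent $\varsigma>0$ being exactly what makes these honest barriers after absorbing the error terms coming from $g_j\neq g_{\mathbb E}$ and from the nonlinearity), and apply the comparison principle for the quasilinear operator on $\mathbb{R}^n\setminus B_{\bar r}$, with boundary data on $S_{\bar r}$ dominated using the interior gradient estimate. This yields $|f_j(x)| \leq \mathcal{C}|x|^{2-n-\varsigma}$ uniformly in $j$ for $|x|\geq \bar r$, which is the $|\beta|=0$ case of \eqref{hhhh}. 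For $|\beta|=1$ I would pass to the rescaled functions $f_{j,R}(y) := R^{n-2-\varsigma} f_j(Ry)$ on the fixed annulus $1\le |y|\le 2$; these satisfy a uniformly elliptic quasilinear equation with uniformly bounded coefficients and right-hand side $O(R^{-1-\varsigma})\to 0$, and are uniformly bounded by the $C^0$ estimate, so De Giorgi–Nash–Moser and then Schauder give uniform $C^{1,\alpha}$ (indeed $C^\infty_{loc}$) bounds on the annulus; unrescaling produces the gradient and higher-derivative bounds in \eqref{hhhh} with constants depending only on $\varsigma$ and the uniform asymptotic flatness constants. The ``in particular'' clause follows immediately: $\bar g_j = g_j + df_j\otimes df_j$, so $|\partial^{\beta_1}(\bar g_{j,ij}-\delta_{ij})| \leq |\partial^{\beta_1}(g_{j,ij}-\delta_{ij})| + |\partial^{\beta_1}(f_{j,i}f_{j,j})|$, and the quadratic term in $f_j$ and its derivatives decays like $|x|^{-2(n-2-\varsigma)-|\beta_1|}$, which for $n\geq 3$ and $\varsigma$ small is faster than $|x|^{-(n-2+|\beta_1|)}$; the curvature and mean-curvature decay in \eqref{[[[[} for $\bar g_j$ follow similarly from the formula relating $R_{\bar g_j}$ to $R_{g_j}$ and derivatives of $f_j$. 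Since the constant obtained for $\bar g_j$ and the radius $\bar r$ are $j$-independent, the sequence $(\bar M^n_j,\bar g_j)$ is uniformly asymptotically flat.

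The main obstacle I anticipate is making the barrier argument genuinely $j$-uniform: one must verify that $\pm\mathcal{C}|x|^{2-n-\varsigma}$ is a super/subsolution of the \emph{full} quasilinear operator $\operatorname{div}_{g_j}(\mathbf a_j(\nabla\cdot))$ for a single $\mathcal{C}$ and a single $\bar r$, which requires absorbing (i) the difference between $\operatorname{div}_{g_j}$ and the flat divergence — controlled by $|g_j-\delta| = O(|x|^{2-n})$ uniformly, hence negligible for large $|x|$ — and (ii) the nonlinear correction $\mathbf a_j(\nabla f)-\nabla f = O(|\nabla f|^3)$, which is where the strict inequality $\varsigma>0$ is essential, since with $\varsigma=0$ the cubic error and the linear term decay at borderline-comparable rates at the level of the barrier. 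A secondary technical point is the gradient estimate up to $S_{\bar r}$ needed to start the comparison; for this I would either invoke the interior-gradient estimate for solutions of Jang's equation (the uniform $|v_j|<1$ bound from the existence theory, when available, or the general interior estimates of Schoen--Yau adapted to the uniformly flat setting) or simply choose $\bar r$ large and close up the comparison on $S_{\bar r}$ using the already-established interior $C^1$ bound on a slightly larger annulus. None of these steps uses spherical symmetry, which is consistent with the claim that the lemma holds in the general case.
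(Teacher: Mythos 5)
Your skeleton — barrier construction, comparison principle, rescaled Schauder bootstrap, and then reading off uniform asymptotic flatness of $\bar g_j = g_j + df_j^2$ from decay of $f_j$ — is the same as the paper's (which adapts Schoen--Yau \cite{SchoenYauII}, pp.~248--9). But there are two problems, one cosmetic and one structural.

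The cosmetic one: your barrier exponent has the wrong sign. You wrote $\pm\mathcal C|x|^{2-n-\varsigma}$, i.e.\ exponent $-(n-2+\varsigma)$, but the lemma's conclusion (and any working barrier) has $-(n-2-\varsigma)$. Since $\Delta(|x|^{-a}) = a(a+2-n)|x|^{-a-2}$, the choice $a = n-2+\varsigma$ gives a \emph{positive} Laplacian, so $\mathcal C|x|^{-a}$ is a subsolution of the Laplace equation and cannot dominate the $O(|x|^{-n})$ inhomogeneity coming from $\operatorname{Tr}_g k$. With $a = n-2-\varsigma$ the Laplacian equals $-\varsigma(n-2-\varsigma)|x|^{-(n-\varsigma)}$, which is negative and of lower order than $|x|^{-n}$, so for $\mathcal C$ large one really does get a supersolution for $|x|$ large. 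This — not the cubic nonlinear correction you flag — is why $\varsigma>0$ is essential: at $\varsigma=0$ the linear leading term vanishes identically.

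The structural gap is the boundary comparison on $S_{\bar r}$, which you acknowledge but do not actually close. A finite-slope power-law barrier requires you to know, \emph{a priori and uniformly in $j$}, that $|f_j|\le \mathcal C\bar r^{-(n-2-\varsigma)}$ on $S_{\bar r}$ before the comparison principle can be started; but a $j$-uniform $C^0$ bound on $f_j$ is essentially what the lemma is meant to establish, so invoking an ``already-established interior $C^1$ bound on a slightly larger annulus'' is circular. Nor does the algebraic bound $|v_j|<1$ from the spherically symmetric existence theory help: it is a gradient bound, and integrating a bounded gradient from $S_{\bar r}$ out to infinity does not control $f_j$ at $S_{\bar r}$. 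The paper's proof sidesteps the boundary entirely by choosing the barrier
\[
\bar f(r)=\lambda\int_r^\infty\bigl(s^{2p+2}-\lambda^2\bigr)^{-1/2}\,ds,\qquad p=n-2-\varsigma,
\]
which satisfies $\bar f(r)\le c_1\lambda r^{-p}$ and, crucially, $\bar f'(r)\to-\infty$ as $r\to\lambda^{1/(p+1)}=:\bar r$. Because each $f_j$ is smooth with finite gradient, an extremum of $f_j-\bar f$ cannot sit on $S_{\bar r}$: the boundary derivative inequality would force $f_j'(r)\to-\infty$ there. Hence the maximum principle only needs the supersolution property (uniform in $j$, via the uniform asymptotic-flatness constants) and the decay $f_j\to 0$ at infinity, both of which you have, with no boundary data required. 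Your power-law barrier has bounded slope everywhere and cannot reproduce this trick, so as written the $C^0$ step does not go through. The remainder of your proposal — the rescaled De Giorgi--Nash--Moser/Schauder bootstrap for higher derivatives and the passage to $\bar g_j$ — is fine and matches the paper.
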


\begin{proof}
We shall adapt to our purposes an argument of Schoen-Yau which can be found in \cite[pages 248-9]{SchoenYauII} (see also \cite[Proposition 4]{Eichmair}). Let $r(x)=|x|$ and
for $0<p<n-2$, $\lambda>0$, $r> \lambda^{\frac{1}{p+1}}$ define the radial function
\begin{equation}
\bar{f}(r)=\lambda\int^{\infty}_{r} \left(s^{2p+2}-\lambda^2\right)^{-\frac{1}{2}}ds.
\end{equation}
Observe that there is a constant $c_1=c_1(p)$ such that
\begin{equation}
0<\bar{f}(r)\le c_1 \lambda r^{-p},
\end{equation}
and
\begin{equation}\label{uyt}
\frac{d\bar{f}}{dr}\rightarrow-\infty\quad\quad\text{ as }\quad\quad r\rightarrow \lambda^{\frac{1}{p+1}}.
\end{equation}
A computation shows that the Jang operator evaluated at this radial function yields
\begin{equation}\label{hyui}
\left(g^{ab}-\frac{\bar{f}^a \bar{f}^b}{1+|\nabla \bar{f}|^2_g}\right)\left(\frac{\nabla_{ab}\bar{f}}{\sqrt{
	1+|\nabla \bar{f}|^2_g}}-k_{ab}\right)\leq -\lambda (n-2-p) r^{-p-2}
+c_2\left(r^{-n}+\lambda r^{-p-n}\right),
\end{equation}
where $c_2$ is a uniform constant arising from the uniformly asymptotically
flat condition. We may then choose a uniform $\lambda$ large enough to ensure
that the right-hand side of \eqref{hyui} is nonpositive for $r>\lambda^{\frac{1}{p+1}}$,
and thus $\bar{f}$ is a super-solution. Similarly, $-\bar{f}$ is a sub-solution on this domain. Since $f$ and $\bar{f}$ both vanish at spatial infinity, and the derivative \eqref{uyt} is infinity, a maximum principle argument guarantees that $-\bar{f}\leq f\leq\bar{f}$. Therefore
\begin{equation}
|f(x)|\leq c_1 \lambda |x|^{-p}\quad\quad\text{ for }\quad\quad |x|\geq\lambda^{\frac{1}{1+p}}.
\end{equation}
From this, higher order fall-off follows by rescaling combined with the Schauder estimates as in Proposition 3 of \cite{SchoenYauII}. Lastly, we may set $\bar{r}=\lambda^{\frac{1}{p+1}}$ to obtain the statement of this lemma.
\end{proof}

\section{The Conformal Transformations}
\label{sec5}

In this section we construct the conformal transformations and control the conformal factor
as described in the introduction.

\subsection{Review of Conformal Change without Symmetry}

In the previous section we have obtained, from the given initial data $(M^n,g,k)$, a Jang deformation $(\bar{M}^n,\bar{g})$ which is complete, asymptotically flat, and with an additional asymptotically cylindrical end if the original data possessed a boundary. The positivity property \eqref{Jangscalar} for the scalar curvature of the Jang metric leads to a stability-type inequality via integration by parts combined with Cauchy-Schwarz
\begin{equation}\label{stability}
\int_{\bar{M}^n}\left(c_{n}^{-1}|\nabla\phi|_{\bar{g}}^2
+R_{\bar{g}}\phi^2\right)dV_{\bar{g}}
\geq\int_{\bar{M}^n}\left(16\pi(\mu-J(w))+|h-k|_{\bar{g}}^2
+|q|_{\bar{g}}^2\right)\phi^2
dV_{\bar{g}}
\end{equation}
for all $\phi\in C^{\infty}_{c}(\bar{M}^n)$, where $c_{n}=\frac{n-2}{4(n-1)}$. The left-hand side arises from the basic quadratic form associated with the conformal Laplacian $L_{\bar{g}}=\Delta_{\bar{g}}-c_n R_{\bar{g}}$, and asserts that on compact subsets this operator has nonnegative spectrum (for the Dirichlet problem). In fact the spectrum is strictly positive, since if the principal eigenvalue is zero each term on the right-hand side of \eqref{stability} would vanish, implying that $R_{\bar{g}}=0$ and the principal eigenfunction is harmonic, which is impossible.
Thus, a standard exhaustion argument together with asymptotic analysis \cite{Eichmair,SchoenYauII} shows that there is a positive solution $u>0$ of the zero scalar curvature equation
\begin{equation}\label{conformalequation}
L_{\bar{g}}u=0\quad\quad\text{ on }\quad\quad \bar{M}^n,\quad\quad
u(x)=1+\frac{\alpha}{|x|^{n-2}}+O_2\left(\frac{1}{|x|^{n-1}}\right)\quad\quad\text{ as }\quad\quad|x|\rightarrow\infty,
\end{equation}
for some constant $\alpha$. This allows a conformal transformation $\tilde{g}=u^{\frac{4}{n-2}}\bar{g}$ to zero scalar curvature in which the relation between the masses is given by
$
\tilde{m}=\bar{m}+2\alpha
$
\cite[page 259]{SchoenYauII}.

Moreover in the case that $(\bar{M}^n,\bar{g})$ possesses an
additional cylindrical end, the solution $u$ tends to zero in the limit along that end. In fact the decay along the cylindrical end is exponentially fast $u\sim e^{-\gamma s}$, where $s$ is an arclength parameter along the cylindrical end and $\gamma$ is the principal eigenvalue of $\Delta_{\partial M^n}-c_n R_{\partial M^n}$. In spherical symmetry additional assumptions are not required to obtain $\gamma>0$ since the scalar curvature of the outermost apparent horizon $R_{\partial M^n}$ is positive, although in the general case a sufficient condition is for the dominant energy condition to be strict near the horizon as was used in \cite{SchoenYauII}. The next proposition records these observations.

\begin{prop}\label{contran}
Given a smooth Jang deformation $(\bar{M}^n,\bar{g})$ there exists a positive solution $u\in C^{\infty}(\bar{M}^n)$ of \eqref{conformalequation}, so that the conformal metric $\tilde{g}=u^{\frac{4}{n-2}}\bar{g}$ has zero scalar curvature $R_{\tilde{g}}=0$ and $(\tilde{M}^n,\tilde{g})$ is asymptotically flat with mass
\be
\tilde{m}=\bar{m}+2\alpha.
\ee
If an asymptotically cylindrical end is present in the Jang deformation, then
the conformal factor is asymptotic to $e^{-\gamma s}$ along this end with an arclength parameter $s$.
Furthermore, if the initial data are spherically symmetric then the function $u$ and hence metric $\tilde{g}$ are also spherically symmetric.
\end{prop}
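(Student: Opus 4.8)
The plan is to carry out the standard Schoen--Yau exhaustion construction \cite{SchoenYauII}, paying attention to the cylindrical end and then reading off spherical symmetry from uniqueness. First, the stability inequality \eqref{stability} (together with the argument already given in the text) shows that $L_{\bar g} = \Delta_{\bar g} - c_n R_{\bar g}$ has strictly positive Dirichlet spectrum on every compact domain, so for an exhaustion $\Omega_1 \subset \Omega_2 \subset \cdots$ of $\bar M^n$ by smooth compact domains one can solve $L_{\bar g} u_i = 0$ on $\Omega_i$ with $u_i \equiv 1$ on $\partial \Omega_i$; since $R_{\bar g} \ge 0$ by \eqref{Jangscalar}, the maximum principle gives $u_i > 0$. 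To pass to a limit I would build global barriers: in the asymptotically flat end the functions $1 \pm C|x|^{2-n}$ form a super/subsolution pair for $|x|$ large, because $\bar g$ is uniformly asymptotically flat (Corollary \ref{bar-RotSym}, Lemma \ref{Unf-Asym-Flat for bar}) so that $L_{\bar g}$ differs from the flat Laplacian by sufficiently fast-decaying terms; and, when a boundary is present, near the asymptotically cylindrical end $\partial M^n \times [0,\infty)$ the functions $c\, e^{-\gamma s}\psi$, with $\psi > 0$ the ground state of the cross-sectional operator and $\gamma > 0$ its rate (see below), serve as barriers pinching $u_i$ between two positive multiples of $e^{-\gamma s}$. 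Local Schauder estimates and a diagonal subsequence then yield $u \in C^\infty(\bar M^n)$, $u > 0$, solving $L_{\bar g} u = 0$. Because $\tilde g = u^{4/(n-2)}\bar g$ satisfies $R_{\tilde g} = -c_n^{-1} u^{-(n+2)/(n-2)} L_{\bar g} u$, the metric $\tilde g$ is scalar flat.

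Next, the asymptotics at spatial infinity: since $\bar g$ is asymptotically flat, $L_{\bar g}$ is a short-range perturbation of the Euclidean Laplacian, so standard weighted elliptic theory gives $u - 1 = O_2(|x|^{2-n})$ and, refining by expanding the leading term in spherical harmonics, $u = 1 + \alpha|x|^{2-n} + O_2(|x|^{1-n})$ for a constant $\alpha$. The mass identity $\tilde m = \bar m + 2\alpha$ is then the classical computation of \cite[page 259]{SchoenYauII}: the ADM integrand of $\tilde g$ equals that of $\bar g$ plus a term linear in $u$ whose boundary flux at infinity picks off the coefficient of $|x|^{2-n}$ in $u$ with a factor $\tfrac{4}{n-2}\cdot\tfrac{n-2}{2} = 2$.

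The behavior along the cylindrical end is the delicate point. Writing the end as $\partial M^n \times [0,\infty)$ with arclength coordinate $s$, expanding $u$ in an orthonormal eigenbasis $\{\psi_k\}$ of the cross-sectional operator $-\Delta_{\partial M^n} + c_n R_{\partial M^n}$ (with eigenvalues $\gamma_k^2 \ge 0$), and using that the end is genuinely asymptotically cylindrical by \eqref{casymptotics}, the equation $L_{\bar g} u = 0$ decouples mode by mode into $u_k'' - \gamma_k^2 u_k = (\text{error decaying in } s)$; the bounded solutions decay like $e^{-\gamma_k s}$, so $u \sim c\, e^{-\gamma s}\psi_0$ with $\gamma = \gamma_0$. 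The crucial fact is $\gamma > 0$: in spherical symmetry the cross-section is a round sphere, so $R_{\partial M^n} > 0$ and the bottom of the spectrum of $-\Delta_{\partial M^n} + c_n R_{\partial M^n}$ is positive; in the general case a strict dominant energy condition near the horizon forces the same conclusion, as in \cite{SchoenYauII}. One must also verify that the barrier pair $c^{\pm} e^{-\gamma s}\psi_0$ really pins the exhaustion solutions $u_i$, so that the limit $u$ genuinely exhibits the $e^{-\gamma s}$ decay rather than degenerating or stabilizing to a positive constant; I expect establishing $\gamma > 0$ together with this uniform selection of the decaying branch through the exhaustion to be the main obstacle.

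Finally, spherical symmetry. When the initial data, hence $\bar g$, are spherically symmetric, any $\varphi \in SO(n)$ acts by isometries of $(\bar M^n, \bar g)$, so $u \circ \varphi$ again solves $L_{\bar g}(u\circ\varphi) = 0$ with the same asymptotics ($\to 1$ at infinity, $\to 0$ along the cylindrical end). Such a solution is unique: if $u_1, u_2$ are two of them, their difference is a bounded $L_{\bar g}$-harmonic function tending to zero at both ends, so by the maximum principle (again using $R_{\bar g} \ge 0$) it vanishes identically. Hence $u \circ \varphi = u$ for all $\varphi$, i.e. $u$ is spherically symmetric, and therefore so is $\tilde g = u^{4/(n-2)}\bar g$.
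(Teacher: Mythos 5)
Your construction follows exactly the route the paper takes (which is essentially to invoke the Schoen--Yau/Eichmair exhaustion-plus-asymptotics argument \cite{SchoenYauII,Eichmair}), so the method is the right one, and the mass computation $\tilde m = \bar m + 2\alpha$ and the spherical-symmetry-via-uniqueness argument are both correct. You also correctly anticipate the one genuinely delicate point, namely selecting the exponentially decaying branch down the cylinder, and it is worth being precise about why the scheme as literally written does not close there: if $u_i\equiv 1$ on $\partial\Omega_i$ and the inner boundary component lies at cylinder depth $s_i\to\infty$, then the candidate supersolution $c^{+}e^{-\gamma s}\psi_0$ is far \emph{below} $1$ at $s=s_i$, so it cannot dominate $u_i$ there and the comparison principle never gets started. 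Two standard repairs, both in line with the sources cited in the paper, close the gap. One is to solve for $\phi=u-1$ directly: $L_{\bar g}\phi = c_n R_{\bar g}$ in weighted H\"older (or Sobolev) spaces that build in decay at the asymptotically flat end, using the positivity of the Dirichlet spectrum for invertibility; the fall-off of $R_{\bar g}$ (from uniform asymptotic flatness and \eqref{Jangscalar}) supplies the right-hand side, and a priori weighted estimates give $\phi=O(|x|^{2-n})$ at spatial infinity. The other repair keeps your exhaustion but decouples the two issues: one first shows $\sup_i\sup_{\Omega_i}u_i<\infty$ (Harnack plus the spatial-infinity barrier suffice), extracts a bounded positive limit $u$, and only then runs the separation-of-variables analysis on the cylinder, where positivity of the bottom eigenvalue $\mu_0$ of $-\Delta_{\partial M^n}+c_n R_{\partial M^n}$ (automatic in spherical symmetry since the cross-section is a round sphere) leaves no bounded, non-decaying branch, forcing $u\sim e^{-\sqrt{\mu_0}\,s}\psi_0$ a posteriori rather than by comparison during the exhaustion. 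Either way one obtains exactly the statement of the proposition; the rest of your argument (positivity from $R_{\bar g}\ge 0$, the scalar-curvature transformation formula, the maximum-principle uniqueness delivering $u\circ\varphi=u$ for all $\varphi\in SO(n)$) is fine as written. One minor remark: the rate is the square root of the principal eigenvalue of the cross-sectional operator rather than the eigenvalue itself, a notational point worth tracking carefully since the paper's phrasing is ambiguous on this.
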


\begin{rmrk}\label{rmrk-tilde-general}
Without any symmetry in dimension $n=3$, it follows from a slightly generalized positive mass inequality \cite{SchoenYauII} that $\tilde{m}\geq 0$, and in addition that $\alpha\leq 0$ (see \eqref{stability1} below). Thus if $m=\bar{m}\rightarrow 0$ then $\tilde{m}\rightarrow 0$. The almost rigidity conjecture in the time-symmetric setting then suggests, modulo horizon issues, that $(\tilde{M}^3,\tilde{g})$ is close in the intrinsic flat sense to Euclidean space.
\end{rmrk}

\subsection{Spherically Symmetry Gives $\tilde{g}=g_E$}

We point out that in the spherically symmetric case the existence of a conformal transformation to zero scalar curvature may be obtained from the alternate observation that all spherically symmetric metrics are conformally flat.
This is related to a rigidity phenomena associated to zero scalar curvature in spherical symmetry. The following result, which is similar to Birkhoff's Theorem \cite{Wald} in general relativity, is well-known although the authors do not know of a proper reference in the literature and thus include it here. It should be noted that a proof may be obtained from the arguments of \cite{LeeSormani1}, although here we give another approach.

\begin{lem}\label{birkhoff1}
Let $(M^n,g)$, $n\geq 3$ be spherically symmetric, complete, scalar flat, and asymptotically flat. Then either it is isometric to flat Euclidean space $(\mathbb{R}^n,\delta)$ or the constant time slice of a Schwarzschild spacetime, that is $M^n\cong \mathbb{R}^n\setminus \{0\}$ and
\begin{equation}
g=\left(1+\tfrac{m}{2|x|^{n-2}}\right)^{\frac{4}{n-2}}\delta.
\end{equation}
\end{lem}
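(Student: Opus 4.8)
The plan is to reduce to a one-dimensional ODE using spherical symmetry, and then classify the solutions. First I would write the metric in an area-radius gauge: since $(M^n,g)$ is spherically symmetric one can use the function $\rho$ as a coordinate away from its critical points, writing
\be
g = V(\rho)^{-1}\,d\rho^2 + \rho^2 g_{S^{n-1}}
\ee
for some positive function $V$ (on the region where $\rho$ is strictly monotone; I would handle possible critical points of $\rho$ separately, as they correspond to either a pole or a minimal sphere). The condition $R_g = 0$ then becomes a first-order linear ODE for $V$. Indeed, comparing with the standard computation of scalar curvature for such a warped product (essentially \eqref{ytr1} rewritten in the $\rho$-coordinate), scalar flatness reduces to
\be
(n-2)(1-V) - \rho\, V_\rho = 0,
\ee
whose general solution is $V(\rho) = 1 - c\,\rho^{2-n}$ for a constant $c \in \mathbb{R}$. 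So up to reparametrization the metric is forced to be the $n$-dimensional Riemannian Schwarzschild metric with mass parameter $c$ (including $c=0$, i.e.\ flat space).

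Next I would use the global hypotheses — completeness and asymptotic flatness — to pin down $c$ and to resolve the behavior near critical points of $\rho$. Asymptotic flatness forces the metric to approach $\delta$ at the rate in \eqref{defn-asym-flat}, which is consistent with any value of $c$ (that constant is just the mass), so asymptotic flatness alone does not constrain $c$; rather it guarantees the "$V \to 1$" end exists. Completeness is the real constraint: if $c < 0$ then $V = 1 - c\rho^{2-n} > 1$ is smooth and positive for all $\rho>0$, but then $\rho$ ranges over $(0,\infty)$ and one checks the metric is incomplete as $\rho \to 0^+$ (the radial arclength $\int_0^{\epsilon} V^{-1/2}\,d\rho$ is finite while the sphere radius $\rho \to 0$, and smooth closure at a pole forces $V(0)=1$, contradiction unless $c=0$). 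If $c>0$ then $V$ vanishes at $\rho_0 = c^{1/(n-2)}$; for the manifold to be complete and boundaryless, $\rho$ cannot decrease below $\rho_0$, and the sphere $\rho = \rho_0$ is a totally geodesic minimal hypersurface — this is precisely the Schwarzschild time-slice $M^n \cong \mathbb{R}^n \setminus\{0\}$ after passing to isotropic coordinates, which converts $1 - c\rho^{2-n}$ into the conformal form $\left(1 + \tfrac{m}{2|x|^{n-2}}\right)^{4/(n-2)}\delta$ with $m$ the mass. If $c = 0$ we get flat $\mathbb{R}^n$. This dichotomy is exactly the claimed statement.

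The main obstacle I anticipate is not the local ODE analysis — that is routine — but rather making rigorous the passage to the $\rho$-coordinate when $\rho$ is not globally monotone, and ruling out "bad" global topologies. A priori $\rho$ could have interior critical points, producing necks or multiple ends; I would argue that at any interior critical point $\rho_c$ of $\rho$ the scalar-flat equation together with smoothness forces the geometry to be locally a product or to close off, and then a connectedness/completeness argument (together with the existence of a single asymptotically flat end, which is part of the asymptotic flatness hypothesis as stated — one asymptotic end) excludes everything but the two listed models. An alternative, cleaner route that avoids coordinate issues entirely is to invoke the generalized Hawking/Misner--Sharp mass $\bar m(\bar s)$ from \eqref{Hawking Mass}: its derivative is $2\bar m_{\bar s} = \tfrac{1}{n-1}\rho_{\bar s}\rho^{n-1} R_g$, which vanishes identically when $R_g = 0$, so $\bar m$ is constant, say $\equiv m$; then $\tfrac12 \rho^{n-2}(1 - \rho_{\bar s}^2) = m$ is a first-order ODE for $\rho(\bar s)$ that integrates explicitly and, combined with completeness and the asymptotically flat end, yields the same dichotomy. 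I would likely present this mass-monotonicity argument as the main line since it sidesteps the regularity subtleties of changing coordinates, citing \eqref{eqn-hawking-1}–\eqref{eqn-hawking-2} or their higher-dimensional analogues established in the proof of Theorem~\ref{Penrose}.
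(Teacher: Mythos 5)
Your proposal is correct and covers both routes the paper actually takes: the paper first notes that the (generalized) Hawking mass is constant when $R_g=0$ and invokes the IMCF rigidity for the Penrose inequality, and then gives an alternative proof by writing the metric in arclength polar form $g=dr^2+\rho(r)^2 g_{S^{n-1}}$, using \eqref{ytr1} to reduce $R_g=0$ to the ODE $(\rho')^2=1+c\rho^{2-n}$, and analyzing that ODE together with completeness. Your "main line" suggestion (integrate the constant Hawking-mass relation $\tfrac12\rho^{n-2}(1-\rho_{\bar s}^2)=m$) is literally the same ODE as the paper's alternative proof, so the two versions you describe collapse into one. The one place where the paper's setup is cleaner than yours is precisely the issue you flag: rather than passing to the area-radius gauge $g=V(\rho)^{-1}d\rho^2+\rho^2 g_{S^{n-1}}$, which degenerates at critical points of $\rho$, the paper keeps the arclength coordinate $r$, where the first-order ODE for $(\rho')^2$ is regular everywhere and directly shows $\rho$ has at most one interior critical point, with reflection symmetry across it. This removes the need for your separate discussion of necks and multiple ends. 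Your sign analysis of $c$ (negative mass is incomplete at $\rho\to0$, positive mass produces a minimal sphere, zero gives flat $\R^n$) matches the paper's conclusion.
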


\begin{proof}
This conclusion may be derived from the observation that the Hawking mass of radial spheres is constant under the assumption of zero scalar curvature. The inverse mean curvature flow proof of the Penrose inequality \cite{HuiskenIlmanen} then guarantees the desired result where the parameter $m$ is the value of the constant Hawking mass.

An alternative proof is to directly compute the scalar curvature of the spherically symmetric metric in polar form as in \eqref{ytr1}, and analyze the ODE as is done in \cite[page 70]{Petersen}. It is found that $\rho'=1+c \rho^{2-n}$ for some constant $c<0$, the area radius function $\rho(r)>0$ for all $r\in(-\infty,\infty)$, and it has a unique minimum (corresponding to a minimal surface) at $\rho=|c|^{\frac{1}{n-2}}$. It follows that
$\rho$ may be treated as a radial coordinate and so
\begin{equation}
g=dr^2+\rho^2(r)g_{S^{n-1}}=\left(1+\tfrac{c}{\rho^{n-2}}\right)^{-1}d\rho^2 +\rho^2 g_{S^{n-1}}\quad\quad\text{ for }\quad\quad \rho\geq |c|^{\frac{1}{n-2}}.
\end{equation}
Since $\rho(r)$ has a reflection symmetry across the minimal surface, this may be doubled to obtain the stated conclusion.
\end{proof}

This rigidity result suggests that the conformal transformation obtained in Proposition \ref{contran}, in the case of spherical symmetry, gives rise to Euclidean space as we will now see.

\begin{cor}\label{poiuy}
Let $(M^n,g,k)$ be a spherically symmetric, asymptotically flat initial data set satisfying the dominant energy condition which is either complete or has an outermost apparent horizon boundary. Then the conformally transformed Jang deformation of Proposition \ref{contran} is isometric to Euclidean space $(\tilde{M}^n,\tilde{g})\cong(\mathbb{R}^n, g_{\mathbb{E}})$.
\end{cor}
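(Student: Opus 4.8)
The plan is to feed the conformally changed metric into the rigidity Lemma~\ref{birkhoff1}. By Corollary~\ref{bar-RotSym} the Jang deformation $(\bar M^n,\bar g)$ is smooth, spherically symmetric and asymptotically flat, and it acquires an extra asymptotically cylindrical end exactly when the original data have a horizon boundary. By Proposition~\ref{contran} the conformal factor $u>0$ is smooth and spherically symmetric, the metric $\tilde g=u^{4/(n-2)}\bar g$ is scalar flat and asymptotically flat with a single asymptotically flat end, and along any cylindrical end $u\sim e^{-\gamma s}$ with $\gamma>0$ (here one uses that the outermost apparent horizon has positive scalar curvature in spherical symmetry). So the strategy is: (i) show that $(\tilde M^n,\tilde g)$, with the cylindrical end conformally closed off by a point when one is present, is a \emph{complete, smooth}, spherically symmetric, scalar flat, asymptotically flat manifold with exactly one end, diffeomorphic to $\mathbb R^n$; and (ii) apply Lemma~\ref{birkhoff1}.

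Granting (i), step (ii) finishes the argument: Lemma~\ref{birkhoff1} forces $(\tilde M^n,\tilde g)$ to be isometric to either $(\mathbb R^n,g_{\mathbb E})$ or a constant time slice of a Schwarzschild spacetime. The Schwarzschild possibility is ruled out because that slice is diffeomorphic to $\mathbb R^n\setminus\{0\}$ and carries two asymptotically flat ends separated by a minimal sphere, whereas $\tilde M^n\cong\mathbb R^n$ has a single asymptotically flat end --- the other end, when present, having been capped off by a point of the conformal compactification. Hence $(\tilde M^n,\tilde g)\cong(\mathbb R^n,g_{\mathbb E})$. When the original data are complete there is no cylindrical end, $(\tilde M^n,\tilde g)$ is manifestly complete and diffeomorphic to $\mathbb R^n$, and (i) is automatic; the only real work is in the boundary case.

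The main obstacle is therefore to verify (i) in the presence of a horizon boundary, i.e.\ that the exponentially decaying conformal factor compactifies the cylindrical end to a \emph{smooth} point rather than to a metric singularity or to a second genuine end. I would check this using spherical symmetry directly. Writing $\tilde g=d\tilde s^2+\tilde\rho(\tilde s)^2 g_{S^{n-1}}$ and setting $R_{\tilde g}=0$ in the formula \eqref{ytr1} gives $(n-2)(1-\tilde\rho_{\tilde s}^2)=2\tilde\rho\,\tilde\rho_{\tilde s\tilde s}$, which integrates to $\tilde\rho^{n-2}(1-\tilde\rho_{\tilde s}^2)=\text{const}=:-c$, i.e.\ $\tilde\rho_{\tilde s}^2=1+c\,\tilde\rho^{2-n}$; thus the Misner--Sharp mass $-c/2$ is constant (and equals $\tilde m=\bar m+2\alpha$). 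Along the cylindrical end $\tilde\rho=u^{2/(n-2)}\rho\to 0$ at finite $\tilde g$-distance, and since $u\sim e^{-\gamma s}$ while $\bar g$ is asymptotic to the cylinder of radius $\rho_0$, an elementary computation shows that $\tilde\rho$ is asymptotically linear in the $\tilde g$-distance to the limit point, so $\tilde\rho_{\tilde s}$ tends to a finite nonzero limit. Matching this with $\tilde\rho_{\tilde s}^2=1+c\,\tilde\rho^{2-n}$ as $\tilde\rho\to 0$ excludes $c>0$ (which would force $\tilde\rho_{\tilde s}^2\to\infty$) and $c<0$ (which would trap $\tilde\rho\ge|c|^{1/(n-2)}>0$, contradicting $\tilde\rho\to 0$), leaving $c=0$. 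Then $\tilde\rho_{\tilde s}\equiv\pm1$, so $\tilde g=d\tilde s^2+(\tilde s-a)^2 g_{S^{n-1}}$ is flat, the capped point is an ordinary smooth pole, and $(\tilde M^n,\tilde g)\cong(\mathbb R^n,g_{\mathbb E})$ --- which also re-derives the conclusion without explicitly invoking Lemma~\ref{birkhoff1}. The same ODE applies verbatim in the complete case, where smoothness at the pole ($\tilde\rho\to 0$, $\tilde\rho_{\tilde s}\to 1$) forces $c=0$ immediately.
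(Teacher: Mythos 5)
Your proof is correct and takes essentially the same route as the paper: both arguments use $R_{\tilde g}=0$ to get a constant Hawking mass $\tilde\rho^{n-2}(1-\tilde\rho_{\tilde s}^2)$, then use the exponential decay $u\sim e^{-\gamma s}$ together with the cylindrical asymptotics of $\bar g$ to show that this constant vanishes along the closed-off end, forcing $\tilde\rho_{\tilde s}\equiv\pm 1$ and hence flatness. The paper phrases the final step as the Hawking mass of radial spheres tending to zero, while you phrase it as excluding $c\neq 0$ via the ODE $\tilde\rho_{\tilde s}^2=1+c\,\tilde\rho^{2-n}$, but these are the same computation.
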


\begin{proof}
We only treat the case with boundary, as the case without boundary is similar. Let $\tau$ be the radial distance function from the boundary for $(M^n,g)$. With the help of \eqref{casymptotics} the Jang metric takes the form
\begin{equation}
\bar{g}=(1+f_\tau^2)d\tau^2+\rho^2 g_{S^{n-1}}=\left(1+c\tau^{-l-1}+O(\tau^{-l})\right)d\tau^2
+\rho^2 g_{S^{n-1}}.
\end{equation}
Set $s=\tfrac{2\sqrt{c}}{l}\tau^{-l/2}$ and use the expansion $\rho=\rho(0)+O(\tau)$ to obtain
\begin{equation}
\bar{g}=\left(1+O(s^{-2-\frac{2}{l}})\right)ds^2
+\left(\rho(0)^2+O(s^{-\frac{2}{l}})\right)g_{S^{n-1}},
\end{equation}
which illustrates the cylindrical asymptotics. Since $u\sim e^{-\gamma s}$, a
straightforward computation shows that the Hawking mass \eqref{Hawking Mass}
of the radial spheres $m(s)\rightarrow 0$ as $s\rightarrow\infty$. Moreover, since $R_{\tilde{g}}=0$, as in the proof of Lemma \ref{birkhoff1} the Hawking mass of these spheres must be constant, and hence zero. The desired conclusion now follows.
\end{proof}

\begin{rmrk}
The results of this subsection rely heavily on the spherically symmetric assumption. Without this hypothesis, the conformally changed manifold is only known to be scalar flat rather than isometric to Euclidean space. Any progress in showing that this manifold is close to Euclidean space will most likely be obtained in conjunction with advances toward the Riemannian version of the stability conjecture.
\end{rmrk}

\subsection{Controlling the Conformal Factor $u$}

For the remainder of this section we will examine how the mass may be
used to control the conformal factor $u$ and the Jang deformation.
Since $\tilde{g}$ is flat its mass vanishes $\tilde{m}=0$, and therefore the formula of Proposition \ref{contran} relating the masses of each deformation yields $-2\alpha=m$; where we have also used that the Jang transformation preserves mass. Furthermore,
multiplying equation \eqref{conformalequation} through by $u$ and integrating by parts, and using the divergence structure present in $R_{\bar{g}}$ yields
\begin{equation}\label{stability1}
-c_{n}^{-1}(n-2)\omega_{n-1}\underbrace{\alpha}_{-\frac{m}{2}}
\geq
\int_{\bar{M}^n}\left[\frac{4}{n-2}|\nabla u|_{\bar{g}}^2
+\left(16\pi(\mu-J(w))+|h-k|_{\bar{g}}^2
+|q|_{\bar{g}}^2\right)u^2\right]
dV_{\bar{g}}.
\end{equation}
It follows that $L^2$ gradient bounds for the conformal factor are given in terms of the mass.

\begin{lem}\label{Sobolev-control-of-u}
Assume that the given spherically symmetric initial data $(M^n,g,k)$ satisfies the dominant energy condition. If $u$ is the solution of \eqref{conformalequation} given by Proposition \ref{contran}, then
\begin{equation}\label{jjjj}
\parallel \nabla u\parallel_{L^2(\bar{M}^n,\bar{g})}^2
\leq\frac{(n-2)^2 \omega_{n-1}}{8 c_{n}}m.
\end{equation}
\end{lem}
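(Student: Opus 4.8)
The plan is to obtain \eqref{jjjj} essentially for free from the stability inequality \eqref{stability1}, so almost no new work is required. First I would pin down the constant $\alpha$. As already noted in the discussion preceding the lemma, the conformal metric $\tilde g$ is flat, hence has vanishing ADM mass, so the mass relation $\tilde m = \bar m + 2\alpha$ of Proposition~\ref{contran}, combined with $\bar m = m$ from Corollary~\ref{bar-RotSym}, forces $\alpha = -m/2$. Substituting this into the left-hand side of \eqref{stability1} converts it into $\tfrac{1}{2}c_n^{-1}(n-2)\omega_{n-1}m$.

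Next I would discard the nonnegative terms on the right-hand side of \eqref{stability1}. The summands $|h-k|_{\bar g}^2 u^2$ and $|q|_{\bar g}^2 u^2$ are nonnegative because $u>0$, and the matter term is nonnegative as well: the vector field $w$ from Section~\ref{sec4} satisfies $|w|_{\bar g}^2 = |\nabla f|_g^2/(1+|\nabla f|_g^2) < 1$, so by Cauchy--Schwarz and the dominant energy condition $J(w) \le |J|_{\bar g}\,|w|_{\bar g} \le |J|_g \le \mu$, i.e. $\mu - J(w) \ge 0$. Dropping all of these contributions leaves
\be
\tfrac{1}{2}c_n^{-1}(n-2)\omega_{n-1}m \;\ge\; \frac{4}{n-2}\int_{\bar M^n}|\nabla u|_{\bar g}^2\,dV_{\bar g},
\ee
and multiplying through by $(n-2)/4$ gives exactly the asserted bound \eqref{jjjj}.

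The only point deserving a line of comment is that the integrals here are finite and that the integration by parts producing \eqref{stability1} is legitimate. This follows from the asymptotics recorded in \eqref{conformalequation}: at the asymptotically flat end $u = 1 + \alpha|x|^{2-n} + O_2(|x|^{1-n})$, so $|\nabla u|_{\bar g}^2 = O(|x|^{2-2n})$, which is integrable over $\mathbb{R}^n$ since $2n-2 > n$ for $n \ge 3$, while along any asymptotically cylindrical end $u \sim e^{-\gamma s}$ with $\gamma > 0$ by Proposition~\ref{contran}. Thus there is no genuine analytic obstacle; the only thing one must be slightly careful about is the sign of $\mu - J(w)$, which reduces to the bound $|w|_{\bar g} < 1$ that is built into the Jang metric by construction.
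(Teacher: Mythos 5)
Your argument is correct and is essentially the paper's own: the paper derives \eqref{stability1} immediately before the lemma and then simply reads off the gradient bound after substituting $\alpha=-m/2$ (from $\tilde m=0$ and $\bar m=m$), exactly as you do. One minor notational slip: the quantity you compute, $|\nabla f|_g^2/(1+|\nabla f|_g^2)$, is $|w|_g^2$, not $|w|_{\bar g}^2$ (in fact $|w|_{\bar g}^2=|\nabla f|_g^2$, which need not be less than one). The Cauchy--Schwarz step should therefore be taken with respect to $g$, i.e.\ $J(w)\le |J|_g\,|w|_g<|J|_g\le\mu$; with this correction the justification of $\mu-J(w)\ge 0$ is fine and the rest of the proof goes through unchanged.
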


\begin{rmrk}
We remark that a version of Lemma \ref{Sobolev-control-of-u} is likely to hold without the assumption of spherical symmetry, when a smooth Jang deformation exists. Namely, if the positive mass theorem is valid for $(\tilde{M}^n,\tilde{g})$ then $-2\alpha\leq m$, and \eqref{jjjj} again follows from \eqref{stability1}. The issue is that the $n$-dimensional Riemannian positive mass theorem \cite{SchoenYauI,SchoenYau}
is not immediately applicable, as $(\tilde{M}^n,\tilde{g})$ may not be a smooth manifold, geometrically or topologically. The later pathology arises from the fact that in higher dimensions, although the outermost apparent horizon is of positive Yamabe invariant, it may not be of spherical topology. Nonetheless, Eichmair \cite{Eichmair} has found an effective way to deal with these concerns.
\end{rmrk}

Next observe that the $L^2$ gradient bound for $u$ on the Jang surface may be translated into a similar bound for $\log u$ on Euclidean space. This follows from Corollary \ref{poiuy} since $g_{\mathbb{E}}=u^{-\frac{4}{n-2}}\bar{g}$, and in particular
\begin{equation}
|\nabla u|^2_{\bar{g}}=u^{\frac{4}{n-2}}|\nabla u|^2_{g_{\mathbb{E}}},\quad\quad\quad\quad
dV_{\bar{g}}=u^{-\frac{2n}{n-2}}dV_{g_{\mathbb{E}}}.
\end{equation}

\begin{cor}\label{Sobolev Bound on omega}
Under the hypotheses of Lemma \ref{Sobolev-control-of-u}
\begin{equation}\label{iiii}
\parallel \nabla \log u\parallel_{L^2(\mathbb{R}^n,g_{\mathbb{E}})}^2
\leq\frac{(n-2)^2 \omega_{n-1}}{8 c_{n}}m.
\end{equation}
\end{cor}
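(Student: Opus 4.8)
The plan is to derive \eqref{iiii} directly from the $L^{2}$ bound \eqref{jjjj} of Lemma~\ref{Sobolev-control-of-u}, using the fact that in the spherically symmetric setting the conformal factor relating $\bar g$ and $g_{\mathbb E}$ is $u$ itself (Corollary~\ref{poiuy}). The key observation is that the powers of $u$ produced by changing the metric in $|\nabla\,\cdot\,|^{2}$, in the volume form, and in passing from $u$ to $\log u$ cancel \emph{exactly} in dimension $n$, so the two $L^{2}$ quantities are literally equal and the inequality transfers with the same constant.

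First I would invoke Corollary~\ref{poiuy} to write $g_{\mathbb E}=u^{-4/(n-2)}\bar g$, identifying the two manifolds via the associated diffeomorphism; when the initial data possess a boundary this diffeomorphism collapses the asymptotically cylindrical end of $\bar M^{n}$ onto the origin of $\mathbb R^{n}$, and since a point has $g_{\mathbb E}$-measure zero the integral over $\mathbb R^{n}$ coincides with the integral over $\bar M^{n}$. Next I would substitute the pointwise conformal identities recorded immediately before the statement, $|\nabla u|^{2}_{\bar g}=u^{4/(n-2)}|\nabla u|^{2}_{g_{\mathbb E}}$ and $dV_{\bar g}=u^{-2n/(n-2)}dV_{g_{\mathbb E}}$, together with $|\nabla\log u|^{2}_{g_{\mathbb E}}=u^{-2}|\nabla u|^{2}_{g_{\mathbb E}}$, into the left-hand side of \eqref{iiii}. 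A short bookkeeping of exponents shows that $|\nabla\log u|^{2}_{g_{\mathbb E}}\,dV_{g_{\mathbb E}}=u^{\,\theta}\,|\nabla u|^{2}_{\bar g}\,dV_{\bar g}$ with $\theta=-2-\tfrac{4}{n-2}+\tfrac{2n}{n-2}=0$, so that $\|\nabla\log u\|_{L^{2}(\mathbb R^{n},g_{\mathbb E})}^{2}=\|\nabla u\|_{L^{2}(\bar M^{n},\bar g)}^{2}$, and \eqref{iiii} follows at once from \eqref{jjjj}.

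I do not expect any serious obstacle here; the only point that needs a word of care is the degeneration of $u$ along the cylindrical end, where $\log u\to-\infty$. One should note that the exponential decay $u\sim e^{-\gamma s}$ and the cylindrical asymptotics of $\bar g$ from Proposition~\ref{contran} keep $|\nabla u|^{2}_{\bar g}$ integrable near the end (equivalently, keep $|\nabla\log u|_{g_{\mathbb E}}$ integrable near the puncture), so the change of variables loses no boundary contribution; in any case the finiteness of the right-hand side of \eqref{jjjj} already guarantees this a posteriori.
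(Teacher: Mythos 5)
Your proof is correct and follows the same approach as the paper, which simply states the two conformal identities for $|\nabla u|^2$ and $dV$ and leaves the exponent bookkeeping to the reader; you have spelled out that bookkeeping, verifying that the exponent $\theta$ vanishes so the two $L^2$ norms coincide, and then invoked \eqref{jjjj}. One small note: the line $g_{\mathbb E}=u^{-4/(n-2)}\bar g$ (copied from the text) is inverted relative to Proposition~\ref{contran} and Corollary~\ref{poiuy}, which give $g_{\mathbb E}=\tilde g=u^{4/(n-2)}\bar g$; this is a typographical slip in the paper, and the two pointwise identities you actually use are the ones consistent with the correct relation, so your computation and conclusion are unaffected.
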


The global Sobolev bounds for the conformal factor obtained from the stability inequality can be parlayed into $C^0$ and even H\"{o}lder estimates away from
the central fixed point of the spherical symmetry. To accomplish we will first need to obtain uniform control for $u$ in the asymptotically flat end.

\begin{lem}\label{asym-control-on-u-7890}
Let $(M^n_j,g_j,k_j)$ be a sequence of spherically symmetric, uniformly asymptotically flat initial data satisfying the dominant energy condition and
with either outermost apparent horizon boundary or no boundary. Let $(\bar{M}_j^n,\bar{g}_j)$ be the corresponding Jang deformations conformally related to $(\tilde{M}^n_j,\tilde{g}_j)$ via conformal factors $u_j$ solving \eqref{conformalequation}. Then there exist uniform constants $c$ and $\bar{r}$ such that
\begin{equation}\label{nbnb}
u_j(r)\leq \exp{cr^{-2(n-2)}}\quad\quad\text{ for }\quad\quad r\geq \bar{r},
\end{equation}
where $r=|x|$ is the radial coordinate from \eqref{defn-asym-flat}
and $\bar{r}$ is as in Proposition \ref{Unf-Asym-Flat for bar}.
\end{lem}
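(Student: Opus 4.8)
The plan is to work with $v_j=\log u_j$ and to exploit the divergence structure of the Jang scalar curvature together with the uniform asymptotics already at our disposal. By Corollary~\ref{poiuy} and Proposition~\ref{contran} the conformally transformed Jang metric is flat, so $\bar g_j=u_j^{-4/(n-2)}g_{\mathbb E}$, the mass relation $\tilde m_j=\bar m_j+2\alpha_j$ gives $\alpha_j=-m_j/2\le 0$, and $u_j=1+\alpha_j|x|^{-(n-2)}+O_2(|x|^{-(n-1)})$ in the asymptotic end. From \eqref{conformalequation} we get $\Delta_{\bar g_j}v_j=c_nR_{\bar g_j}-|\nabla v_j|_{\bar g_j}^2\le c_nR_{\bar g_j}$, and by \eqref{Jangscalar} and the dominant energy condition $R_{\bar g_j}=\sigma_j-2\,\mathrm{div}_{\bar g_j}q_j$ with $\sigma_j:=16\pi(\mu_j-J_j(w_j))+|h_j-k_j|_{\bar g_j}^2+2|q_j|_{\bar g_j}^2\ge 0$. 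Hence $\mathrm{div}_{\bar g_j}\big(\nabla v_j+2c_n q_j^{\sharp}\big)\le c_n\sigma_j$ on the end $\{|x|\ge\bar r\}$, with $\bar r$ the radius from Lemma~\ref{Unf-Asym-Flat for bar}.

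First I would integrate this differential inequality over $\{|x|\ge\rho\}$ for $\rho\ge\bar r$ and apply the divergence theorem. The flux through the sphere at infinity equals $\tfrac12(n-2)\omega_{n-1}m_j$ by the asymptotics of $v_j$, and the contribution of $q_j$ is $O(\rho^{-n})$ because $q_j=O(|x|^{1-2n})$ --- this fall-off follows from Theorem~\ref{jangexistence}, Lemma~\ref{Unf-Asym-Flat for bar}, and the improved decay $|k_j|=O(|x|^{-n})$ of Proposition~\ref{nbvc}. One is left with a bound for $\partial_{\bar\nu}v_j$ on $S_\rho$ in terms of $\tfrac12(n-2)\omega_{n-1}m_j-c_n\int_{\{|x|\ge\rho\}}\sigma_j\,dV_{\bar g_j}+O(\rho^{-n})$. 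The tail $\int_{\{|x|\ge\rho\}}\sigma_j\,dV_{\bar g_j}$ I would estimate two ways: by the stability inequality \eqref{stability1} and Lemma~\ref{Sobolev-control-of-u} it is $\lesssim m_j$ (once $u_j$ is known to be comparable to $1$); and by uniform asymptotic flatness $\sigma_j=O(|x|^{-(n+1)})$ pointwise, so it is $\le C\rho^{-1}$ with $C$ independent of $j$. The second bound shows that for $\rho$ past a threshold of size $\sim 1/m_j$ the negative mass term dominates, forcing $\partial_{\bar\nu}v_j\ge -C|x|^{1-2n}$; integrating inward from spatial infinity (where $v_j\to 0$) yields $v_j(\rho)\le C\rho^{2-2n}\le C\rho^{-2(n-2)}$ there, since $2n-2\ge 2(n-2)$. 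On the remaining bounded band $\bar r\le|x|\lesssim 1/m_j$ one instead uses the $\lesssim m_j$ bound: reintegrating the flux identity for $U_j=1/u_j$ gives $U_j=1+\tfrac{m_j}{2}|x|^{-(n-2)}$ minus an error dominated by $\min\{m_j|x|^{-(n-2)},|x|^{-(n-1)}\}+O(|x|^{-(2n-2)})$, so either $U_j\ge 1$ and $\log u_j\le 0$, or $|U_j-1|$ is small and $\log u_j\le c|x|^{-2(n-2)}$ with $c$ depending only on the uniform data and an upper bound for the $m_j$. Passing from $|x|$ to the Euclidean distance (comparable by Lemma~\ref{Unf-Asym-Flat for bar}) finishes the estimate.

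The a priori ingredient both estimates rely on --- a uniform two-sided bound $c_0\le u_j\le C_0$ on $\{|x|\ge\bar r\}$ --- I would obtain by comparing $u_j$ with radial barriers of the form $1+M_0-C|x|^{-(n-1)}$: because $|R_{\bar g_j}|\le C|x|^{-(n+1)}$ and $\Delta_{g_{\mathbb E}}|x|^{-(n-1)}=(n-1)|x|^{-(n+1)}$, for $\bar r$ large and $C$ large these are supersolutions of $L_{\bar g_j}$ on the end, and symmetrically for a subsolution; the boundary values on $\{|x|=\bar r\}$ are controlled by a preliminary $L^2$/Moser estimate from Lemma~\ref{Sobolev-control-of-u} together with the known decay at infinity, and the maximum principle then yields the uniform bound, which one bootstraps into the sharper estimate above.

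The hard part will be making the constant $c$ genuinely uniform. The two controls on $\int_{\{|x|\ge\rho\}}\sigma_j\,dV_{\bar g_j}$ are of different character --- one of size $\sim m_j$, one of size $\sim\rho^{-1}$ --- and reconciling them across the intermediate band $\bar r\le|x|\lesssim 1/m_j$, while simultaneously bootstrapping $u_j\asymp 1$, is the delicate point; the resulting $c$ depends on the uniform asymptotic-flatness constant and on $\sup_j m_j$. For $n=3$ the two relevant decay rates coincide, $2(n-2)=n-1$, so there is no intermediate band to handle and the argument is cleanest.
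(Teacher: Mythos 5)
The paper's proof is quite different from yours, and substantially simpler: multiply $L_{\bar g_j}u_j=0$ by $u_j$ and integrate by parts over the \emph{interior} region $\bar M^n_r$ (the component of $\bar M^n_j$ inside the coordinate sphere $S_r$), using \eqref{Jangscalar} to write $R_{\bar g_j}u_j^2 \geq 2u_j^2|q_j|^2 - 2u_j^2\,\mathrm{div}_{\bar g_j}q_j$. After a further integration by parts the bulk terms $c_n^{-1}|\nabla u_j|^2 + 2u_j^2|q_j|^2 + 4u_j q_j\cdot\nabla u_j$ are pointwise nonnegative by Cauchy--Schwarz (since $c_n^{-1}\geq 8$), and the cylindrical end contributes nothing because $u_j$ decays exponentially there. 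Only a boundary integral at $S_r$ survives, whose sign gives, by spherical symmetry, the one-variable inequality $\partial_r\log u_j \geq -2c_n\,q_j(\partial_r)$ for $r\geq\bar r$. The uniform fall-off \eqref{asymp} and \eqref{hhhh} give $|q_j(\partial_r)|\leq\bar c\, r^{-2n+1+2\varsigma}$ uniformly, and integrating from $r$ to $\infty$ (using $u_j\to 1$) immediately yields \eqref{nbnb}. No information about the mass and no a priori two-sided bound on $u_j$ is used.

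Your proposal integrates over the \emph{exterior} region $\{|x|\geq\rho\}$, which forces the mass to enter as a flux at infinity and forces you to \emph{estimate} the bulk $\int\sigma_j$ rather than discard it. This is the source of the difficulties you yourself flag. Two concrete problems: (i) your scheme relies at an essential step on ``$u_j$ is known to be comparable to $1$,'' which you propose to establish via barriers anchored to an $L^2$/Moser estimate from Lemma~\ref{Sobolev-control-of-u}; but that estimate controls $\nabla\log u_j$ in the Euclidean metric, so translating it into a pointwise bound on $u_j$ in the $r$-coordinate requires comparing the two radial coordinates $r$ and $\tilde r$, which in the paper is done via \eqref{tttt} \emph{using} \eqref{nbnb} --- so the bootstrap as described is circular. (ii) The two-regime reconciliation across the intermediate band $\bar r\lesssim|x|\lesssim 1/m_j$ is intricate and, as you note, it is unclear that the final constant is uniform. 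The key idea you are missing is that integrating over the region \emph{inside} $S_r$ exploits the sign of the Schoen--Yau bulk for free, eliminating all dependence on the mass and on any a priori pointwise control of $u_j$.
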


\begin{proof}
For convenience, within the proof the subscript $j$ will be suppressed. Coordinate spheres in the asymptotic region will be denoted by $S_r$; note that they are distinct from the surfaces $\Sigma_A$ used in other parts of the manuscript. Let $\bar{M}^n_r$ denote the component of $\bar{M}^n$ lying inside $S_r$. 
Multiply equation \eqref{conformalequation} through by $u$ and integrate by parts up to a coordinate sphere $S_r$, $r\geq\bar{r}$, and use the divergence structure present in $R_{\bar{g}}$ as in \eqref{stability1} to find
\begin{align}
\begin{split} 
0=& -\int_{\bar{M}^n_r}u\left(c_n^{-1}\Delta_{\bar{g}}u-R_{\bar{g}}u\right)dV_{\bar{g}}\\
=&\int_{\bar{M}^n_r}\left(c_n^{-1}|\nabla u|^2_{\bar{g}}+R_{\bar{g}}u^2\right)dV_{\bar{g}}-\int_{S_r}c_n^{-1}u\partial_{\bar{\nu}}u dA_{\bar{g}}\\
\geq&\int_{\bar{M}^n_r}\left(c_n^{-1}|\nabla u|^2_{\bar{g}}+2u^2 |q|_{\bar{g}}^2
-2u^2\mathrm{div}_{\bar{g}}q\right)dV_{\bar{g}}-\int_{S_r}c_n^{-1}u\partial_{\bar{\nu}}u dA_{\bar{g}}\\
\geq& -\int_{S_r}\left(\tfrac{1}{2c_n}\partial_{\bar{\nu}}u^2+2q(\bar{\nu})u^2\right)
dA_{\bar{g}},
\end{split}
\end{align}
where $\bar{\nu}$ is the unit outer normal with respect to $\bar{g}$. Since all quantities are spherically symmetric and $r$ is arbitrary, we obtain the differential inequality
\begin{equation}
\partial_r \log u\geq -2 c_n q(\partial_r)\quad\quad\text{ for }\quad\quad
r\geq \bar{r}.
\end{equation}
According to the uniform fall-off \eqref{asymp} and \eqref{hhhh}, $q(\partial_r)$ may be estimated to yield
\begin{equation}
\partial_r \log u\geq -\bar{c} r^{-2n+1+2\varsigma}
\end{equation}
where $\bar{c}>0$ is a uniform constant. Integrating on the interval $[r,r_1]$ produces
\begin{equation}
\log u(r_1)-\log u(r)\geq\frac{\bar{c}}{2n-2-2\varsigma}\left(r_1^{-2n+2+2\varsigma}
-r^{-2n+2+2\sigma}\right).
\end{equation}
Now let $r_1\rightarrow\infty$ and use that $u(r_1)\rightarrow 1$ to obtain
\begin{equation}
\log u(r)\leq cr^{-2(n-2)},
\end{equation}
with $c=\bar{c}(2n-2-2\varsigma)^{-1}$.
\end{proof}

Let $\tilde{r}$ denote the radial distance function for $g_{\mathbb{E}}=\tilde{g}$, so that
\begin{equation}
g_{\mathbb{E}}=d\tilde{r}^2 +\tilde{r}^2 g_{S^{n-1}}.
\end{equation}
Consider annular domains $\Omega_{\tilde{A}_1,\tilde{A}_2}\subset\mathbb{E}^n$ whose boundary consists of two coordinate spheres having areas $\tilde{A}_1<\tilde{A}_2$. The next result shows that the conformal factors defining $\tilde{g}$ are uniformly close to $1$ in H\"{o}lder space away from the center of the spherical symmetry. Note that in light of Example \ref{ex-deep-well} we see that it is not possible to have such $C^0$ control near the center.

\begin{prop}\label{C0-outside}
Let $(M^n_j,g_j,k_j)$ be a sequence of spherically symmetric, uniformly asymptotically flat initial data satisfying the dominant energy condition and
with either outermost apparent horizon boundary or no boundary. Let $(\bar{M}_j^n,\bar{g}_j)$ be the corresponding Jang deformations conformally related to $(\tilde{M}^n_j,\tilde{g}_j)$ via conformal factors $u_j$ solving \eqref{conformalequation}. Let $\tilde{A}_{0}>0$ be fixed, assume $m_j<\tilde{A}_0^{-2}$ is uniformly small, and set $\tilde{A}_j= \frac{1}{\sqrt{m_j}}$. Then there exists a uniform constant $C$ such that
\begin{equation}
\parallel\log u_j\parallel_{C^{0,\frac{1}{2}}
\left(\Omega_{\tilde{A}_{0},\tilde{A}_j},g_{\mathbb{E}}\right)}
\leq \frac{C m_j^{\frac{1}{4}}}{\sqrt{\tilde{A}_{0}}}.
\end{equation}
\end{prop}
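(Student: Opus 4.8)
The plan is to reduce the claim to a weighted one-dimensional estimate fed by the global $L^2$-gradient bound of \corref{Sobolev Bound on omega}. By \corref{poiuy} we have $\tilde g_j=g_{\mathbb{E}}$, and by \propref{contran} the conformal factor $u_j$ is spherically symmetric, so $w_j:=\log u_j$ depends only on the Euclidean radial coordinate $\tilde r$ and is smooth and finite on $\{\tilde r>0\}$. Writing $\tilde\rho_0=(\tilde A_0/\omega_{n-1})^{1/(n-1)}$, the annulus $\Omega_{\tilde A_0,\tilde A_j}$ lies in $\{\tilde r\ge\tilde\rho_0\}$, and since the Euclidean distance between two points dominates the difference of their distances to the origin, it suffices to bound the $C^{0,\frac12}$-norm of the single-variable function $\tilde r\mapsto w_j(\tilde r)$ on $[\tilde\rho_0,\infty)$.

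In polar coordinates \corref{Sobolev Bound on omega} reads $\int_0^\infty w_j'(s)^2\,s^{n-1}\,ds\le B_n m_j$ with $B_n=(n-2)^2/(8c_n)$. The conformal factors are normalized so that $u_j\to 1$ at spatial infinity (see~\eqref{conformalequation}), which after matching $\tilde r$ with the asymptotically flat coordinate $|x|$ (via \lemref{Unf-Asym-Flat for bar}, \corref{bar-RotSym}, and the uniform bound of \lemref{asym-control-on-u-7890}) gives $w_j(\tilde r)\to 0$ as $\tilde r\to\infty$; since $s^{-(n-1)}$ is integrable near infinity, a weighted Cauchy--Schwarz estimate shows $w_j'\in L^1$ there, and hence, for every $\tilde r>0$,
\[
w_j(\tilde r)=-\int_{\tilde r}^{\infty}w_j'(s)\,ds,\qquad |w_j(\tilde r)|\le (B_n m_j)^{1/2}\Bigl(\int_{\tilde r}^{\infty}s^{-(n-1)}\,ds\Bigr)^{1/2}\le \frac{(B_n m_j)^{1/2}}{(n-2)^{1/2}}\,\tilde r^{-(n-2)/2}.
\]
Applying the same weighted Cauchy--Schwarz estimate on a subinterval $[\tilde r_1,\tilde r_2]\subset[\tilde\rho_0,\infty)$ and using that $s\mapsto s^{-(n-1)}$ is decreasing gives
\[
|w_j(\tilde r_2)-w_j(\tilde r_1)|\le (B_n m_j)^{1/2}\bigl(\tilde\rho_0^{-(n-1)}(\tilde r_2-\tilde r_1)\bigr)^{1/2},
\]
so the $C^{0,\frac12}$-seminorm of $w_j$ on $[\tilde\rho_0,\infty)$ is at most $(B_n m_j)^{1/2}\,\tilde\rho_0^{-(n-1)/2}$.

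Combining these two displays, and using $\tilde r\ge\tilde\rho_0$ in the pointwise bound, we get
\[
\|w_j\|_{C^{0,\frac12}([\tilde\rho_0,\infty))}\le C_n\,m_j^{1/2}\bigl(\tilde\rho_0^{-(n-2)/2}+\tilde\rho_0^{-(n-1)/2}\bigr)
\]
for a dimensional constant $C_n$. Since $\tilde\rho_0^{n-1}=\tilde A_0/\omega_{n-1}$ one has $\tilde\rho_0^{-(n-1)/2}=(\omega_{n-1}/\tilde A_0)^{1/2}$ and $\tilde\rho_0^{-(n-2)/2}=\tilde\rho_0^{1/2}(\omega_{n-1}/\tilde A_0)^{1/2}$; the hypothesis $m_j<\tilde A_0^{-2}$ gives $m_j^{1/4}\tilde\rho_0^{1/2}\le C_n'$, and uniform smallness of $m_j$ gives $m_j^{1/2}\le m_j^{1/4}$ and $m_j^{1/4}\le C_n''$, so both terms are absorbed into $C\,m_j^{1/4}\tilde A_0^{-1/2}$. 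As the $C^{0,\frac12}$-norm over $\Omega_{\tilde A_0,\tilde A_j}$ is no larger than the one over $[\tilde\rho_0,\infty)$, this is the asserted bound.

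The step I expect to need genuine care is obtaining the decay $w_j(\tilde r)\to 0$ at infinity \emph{uniformly in $j$}: the expansion $u_j=1+\alpha_j|x|^{-(n-2)}+O(|x|^{-(n-1)})$ from \eqref{conformalequation} is a priori only a per-$j$ statement with $j$-dependent remainder, so one really uses the uniform estimate of \lemref{asym-control-on-u-7890} and the matching of the flat radial coordinate $\tilde r$ with the asymptotically flat coordinate. Everything else is the weighted Cauchy--Schwarz (Morrey-type) estimates above together with the elementary bookkeeping with $m_j<\tilde A_0^{-2}$.
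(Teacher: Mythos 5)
Your proof is correct, but it takes a genuinely different route from the paper, and the comparison is instructive. The paper's proof handles the $C^0$ bound by integrating the gradient estimate on a finite interval $[\tilde r_1,\tilde r_2]$, then picking the cutoff $\tilde r_2 = m_j^{-1/2}$ (this is exactly where $\tilde A_j = 1/\sqrt{m_j}$ comes from) and separately controlling $u_j(\tilde r_2)$ through the uniform super/sub-solution estimate of Lemma~\ref{asym-control-on-u-7890} together with the coordinate comparison $\tilde r \leq r + c_3$ (comparing areas of coordinate spheres in $\bar g_j$ and $g_{\mathbb{E}}$). Your approach bypasses all of this: by integrating $w_j'$ from $\tilde r$ to $\infty$, the weighted Cauchy--Schwarz estimate from Corollary~\ref{Sobolev Bound on omega} alone delivers both the $C^0$ bound and the H\"older seminorm, provided one knows $w_j(\tilde r) \to 0$ at infinity. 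That limit is a per-$j$ qualitative fact following directly from the asymptotic expansion in \eqref{conformalequation} (together with the observation that $\tilde r\to\infty$ iff $|x|\to\infty$), and once $w_j' \in L^1$ near infinity the fundamental theorem of calculus applies. So, contrary to your closing caveat, you do not need the decay to be \emph{uniform in $j$}, and you do not need Lemma~\ref{asym-control-on-u-7890} at all: uniformity of the final estimate comes entirely from the uniform $L^2$ gradient bound. Your intermediate $C^0$ estimate is in fact stronger ($m_j^{1/2}\tilde\rho_0^{-(n-2)/2}$ rather than the paper's $m_j^{1/4}$ rate), and you deliberately weaken it back to the stated $m_j^{1/4}/\sqrt{\tilde A_0}$ form. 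One small caveat in the bookkeeping: the inequality $m_j^{1/4}\tilde\rho_0^{1/2}\leq C_n'$ does require either a lower bound on $\tilde A_0$ or a constant depending on $\tilde A_0$ (since $m_j<\tilde A_0^{-2}$ and $\tilde\rho_0=(\tilde A_0/\omega_{n-1})^{1/(n-1)}$ give $m_j^{1/4}\tilde\rho_0^{1/2}\lesssim\tilde A_0^{-(n-2)/(2(n-1))}$, which is unbounded as $\tilde A_0\to 0$), but as $\tilde A_0$ is fixed this matches the paper's own treatment.
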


\begin{proof}
For convenience the subscript $j$ will be suppressed in the proof.
Estimate \eqref{iiii} shows that for any $\tilde{r}_1<\tilde{r}_2$ we have
\begin{equation}
\tilde{r}_1^{n-1}\int_{\tilde{r}_1}^{\tilde{r}_2}\left(\partial_{\tilde{r}}\log u\right)^2 d\tilde{r}\leq \tilde{c}_n m,
\end{equation}
where $\tilde{c}_n=(n-2)^2 c_n^{-1}/8$. With the help of H\"{o}lder's inequality it follows that
\begin{align}\label{rrrr}
\begin{split}
|\log u(\tilde{r}_2)-\log u(\tilde{r}_1)|\,\,\,
=&\,\,\,\left|\int_{\tilde{r}_1}^{\tilde{r}_2}\partial_{\tilde{r}}\log u d\tilde{r}\right|\\
\leq&\,\,\,|\tilde{r}_2-\tilde{r}_1|^{\frac{1}{2}}
\left(\int_{\tilde{r}_1}^{\tilde{r}_2}\left(\partial_{\tilde{r}}\log u\right)^2 d\tilde{r}\right)^{\frac{1}{2}}\\
\leq& \,\,\,\left(\frac{\tilde{c}_n m}{\tilde{r}_1^{n-1}}|\tilde{r}_2-\tilde{r}_1|\right)^{\frac{1}{2}}.
\end{split}
\end{align}
If
\begin{equation}
\tilde{r}_{0}
=\left(\frac{\tilde{A}_{0}}{\omega_{n-1}}\right)^{\frac{1}{n-1}}
\end{equation}
denotes the area radius for the inner boundary of $\Omega_{\tilde{A}_{0},\tilde{A}_j}$ then
\begin{equation}
\frac{|\log u(\tilde{r}_2)-\log u(\tilde{r}_1)|}
{|\tilde{r}_2-\tilde{r}_1|^{\frac{1}{2}}}\,\,\leq\,\, \sqrt{\frac{\tilde{c}_n m}{\tilde{r}_{0}^{n-1}}}
\,\,\,= \,\, \sqrt{\frac{\tilde{c}_n \omega_{n-1} m}{\tilde{A}_{0}}}
\end{equation}
for $\tilde{r}_1\geq \tilde{r}_{0}$,
which yields one half of the desired H\"{o}lder estimate.

The next goal is to obtain $C^{0}$ bounds. Note that \eqref{rrrr} implies
\begin{equation}\label{jik}
|\log u(\tilde{r}_1)|\,\,\leq\,\, |\log u(\tilde{r}_2)|
+\sqrt{\frac{\tilde{c}_n m \tilde{r}_2}{\tilde{r}_{0}^{n-1}}}.
\end{equation}
In order to control $u(\tilde{r}_2)$ uniformly we will utilize Lemma \ref{asym-control-on-u-7890}.
The estimate there, however, is given in terms of the radial coordinate $r$ associated with the uniform asymptotic coordinates of $\bar{g}$. The two coordinates $r$ and $\tilde{r}$ may be compared in the asymptotic end by relating the volumes of coordinate spheres. Let $S_r$ denote a coordinate sphere of radius $r$, then uniform asymptotic flatness shows that its volume with respect to $\bar{g}$ satisfies
\begin{equation}
|S_r|_{\bar{g}}\,\,\leq\,\,\omega_{n-1}r^{n-1}+c_1 r^{n-2}\quad\quad\text{ for }\quad\quad
r\geq \bar{r},
\end{equation}
for some uniform constant $c_1$ (uniform constants will be denoted by $c_i$, $i=1,2,\ldots$). With the help of \eqref{nbnb}, the volume of this same sphere computed with respect to $g_{\mathbb{E}}=u^{\frac{4}{n-2}}\bar{g}$ may be estimated by
\begin{align}
\begin{split}
|S_r|_{g_{\mathbb{E}}}\,\,\,=&\,\,\,u^{\frac{2(n-1)}{n-2}}|S_r|_{\bar{g}}\\
\leq & \,\,\,e^{\frac{2c(n-1)}{n-2}r^{-2(n-2)}}\left(\omega_{n-1}r^{n-1}+c_1 r^{n-2}\right)\\
\leq &\,\,\,\omega_{n-1}r^{n-1}+c_2 r^{n-2}.
\end{split}
\end{align}
On the other hand
\begin{equation}
|S_r|_{g_{\mathbb{E}}}\,=\,\omega_{n-1}\tilde{r}^{n-1},
\end{equation}
and hence
\begin{equation}\label{tttt}
\tilde{r}\,\leq\, r+c_3 \quad\quad\quad \Rightarrow \quad\quad\quad
\frac{1}{r}\,\leq\,\frac{1}{\tilde{r}}+\frac{c_4}{\tilde{r}^2}.
\end{equation}
We may now combine \eqref{nbnb}, \eqref{jik}, and \eqref{tttt} to find for large $\tilde{r}_2$ that
\begin{align}
\begin{split}
|\log u(\tilde{r}_1)|\,\,\,\leq&\,\,\, \frac{c}{r(\tilde{r}_2)^{2(n-2)}}
+\sqrt{\frac{\tilde{c}_n m \tilde{r}_2}{\tilde{r}_{0}^{n-1}}}\\
\leq &\,\,\,\frac{c_5}{\tilde{r}_2^{2(n-2)}}
+\sqrt{\frac{\tilde{c}_n m \tilde{r}_2}{\tilde{r}_{0}^{n-1}}}.
\end{split}
\end{align}
By choosing $\tilde{r}_2=m^{-\frac{1}{2}}$ the desired result follows.
\end{proof}

\section{Intrinsic Flat Convergence of the Base Manifolds}
\label{sec6}

In this section we prove the following proposition which implies
(\ref{main-thm-VF}) of Theorem~\ref{main-thm}.  Recall that
\be
\Omega^j_A=\{ x\in M_j^n\mid\, \rho_j(x)\le \rho_A\}
\ee
is the region within the level set $\Sigma^j_A$ of area $A$ with respect to $g_j$ and
$\bar{g}_j$.  A priori we do not know the area of the level set $\Sigma^j_A$ with respect to the
metric $\tilde{g}_j=u_j^{\frac{4}{n-2}} \bar{g}_j=g_{\mathbb E}$.  This section strongly uses spherical symmetry and
the fact that the metrics $\bar{g}$ and $g$ are monotone as was proven in Lemma~\ref{Monotonicity}.

\begin{prop} \label{prop-main-thm-VFS}
Given any $A>0$, $D>0$, and $\epsilon>0$
there exists $\delta=\delta(A, D,\epsilon)>0$ such that if mass $m_j < \delta$
then
\be\label{prop-main-thm-VF}
d_{\mathcal{VF}}\left(\left(\Omega_j ,\bar{g}_j\right),
\left(\Omega'_j, g_{\mathbb E}\right) \right) < \epsilon,
\ee
where
\be \label{Omega-j}
\Omega_j =\Omega^j_A \cap T_D(\Sigma^j_A) \subset \bar{M}_j^n \textrm{ with metric tensor } \bar{g}_j
\ee
and
\be\label{Omega'-j}
\Omega'_j=  \Omega^j_A \subset {\mathbb{E}}^n \textrm{ with metric tensor } \tilde{g}_j=u^{\frac{4}{n-2}}_j \bar{g}_j=g_{\mathbb E}.
\ee
Furthermore for fixed $A>0$ and $D>0$, if $m_j \to 0$ then
\be \label{Omega_j-smoothly}
(\Omega'_j, g_{\mathbb E}) \to (B_0(\rho_A), g_{\mathbb E}) \textrm{ smoothly as } j \to \infty.
\ee
\end{prop}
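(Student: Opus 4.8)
\emph{Plan.} I would compare $(\Omega_j,\bar g_j)$ with $(\Omega'_j,g_{\mathbb E})$ by cutting each into a large outer annular piece, on which the two metrics differ only by a conformal factor tending uniformly to $1$, and a small central core, which I control only in volume; intrinsic flat closeness then follows from the subdiffeomorphism/gluing estimate of Lakzian and the third author \cite{Lakzian-Sormani}, while the extra term in $d_{\mathcal{VF}}$ is handled by proving volume convergence directly. Throughout I use Corollary~\ref{poiuy}, so that $\tilde g_j=u_j^{4/(n-2)}\bar g_j=g_{\mathbb E}$ and everything can be phrased on $\mathbb E^n$ with the single radial conformal factor $u_j$, and Lemma~\ref{Monotonicity}, so that the $\bar g_j$-area radius $\rho_j$ of symmetry spheres is a genuine radial coordinate; in particular $\rho_j$ and the Euclidean radial coordinate $\tilde r$ are each monotone in the other, so ``inside/outside'' is unambiguous. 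Write $\tilde\rho^j_A$ for the $g_{\mathbb E}$-area radius of $\Sigma^j_A$, so $\tilde\rho^j_A=u_j(\Sigma^j_A)^{2/(n-2)}\rho_A$ and $(\Omega'_j,g_{\mathbb E})=(B_0(\tilde\rho^j_A),g_{\mathbb E})$.

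\emph{Locating $\Sigma^j_A$ and the outer region.} First I establish $\tilde\rho^j_A\to\rho_A$ — which is precisely \eqref{Omega_j-smoothly}, since flat balls converge smoothly once their radii do — by a sandwich argument: if $\tilde\rho^j_A<\rho_A/2$ along a subsequence, apply Proposition~\ref{C0-outside} at the fixed Euclidean radius $\rho_A/2$ to get $u_j\to1$ there, so the symmetry sphere of Euclidean radius $\rho_A/2$ has $\bar g_j$-area radius $<\rho_A$ for large $j$, hence lies inside $\Sigma^j_A$, forcing $\tilde\rho^j_A>\rho_A/2$, a contradiction; the reverse inequality is symmetric, so $\tilde\rho^j_A$ lies in a fixed compact subinterval of $(0,\infty)$, and then Proposition~\ref{C0-outside} applies at $\Sigma^j_A$ itself to give $u_j(\Sigma^j_A)\to1$, i.e.\ $\tilde\rho^j_A\to\rho_A$. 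Next fix $\eta\in(0,\rho_A/4)$ and set $W_j=\{\,\eta\le\tilde r\le\tilde\rho^j_A\,\}$, regarded as a subregion of $(\bar M^n_j,\bar g_j)$ on which $\bar g_j=u_j^{-4/(n-2)}g_{\mathbb E}$. Proposition~\ref{C0-outside} with $\tilde A_0=\omega_{n-1}\eta^{n-1}$ (legitimate once $m_j$ is small) gives $\|\log u_j\|_{C^{0,1/2}(W_j)}\le C m_j^{1/4}\eta^{-(n-1)/2}\to0$; hence on $W_j$ the identity is a diffeomorphism onto the Euclidean annulus $\{\eta\le\tilde r\le\tilde\rho^j_A\}$ whose bi-Lipschitz constant tends to $1$, the $\bar g_j$-areas of the two boundary spheres converge to their Euclidean areas, and $\vol_{\bar g_j}(W_j)-\vol_{g_{\mathbb E}}(W_j)\to0$ — which is exactly the kind of data required by \cite{Lakzian-Sormani}.

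\emph{The central core.} On the Euclidean side the leftover is $\Omega'_j\setminus W_j=B_0(\eta)$, of volume $O(\eta^n)$ uniformly in $j$. On the Jang side $\Omega_j\setminus W_j$ consists of symmetry spheres of Euclidean radius $<\eta$; by the previous step the sphere of Euclidean radius $\eta$ has $\bar g_j$-area radius $\le2\eta$ for large $j$, so by monotonicity (Lemma~\ref{Monotonicity}) every sphere in $\Omega_j\setminus W_j$ has $\bar g_j$-area radius $\le2\eta$. Writing the volume form of $\bar g_j=d\bar s^2+\rho_j^2 g_{S^{n-1}}$ as $\rho_j^{n-1}\,d\bar s\,dV_{S^{n-1}}$, the coarea formula gives
\[
\vol_{\bar g_j}(\Omega_j\setminus W_j)\ \le\ \omega_{n-1}\,(2\eta)^{n-1}\!\int_{\Omega_j}d\bar s\ \le\ \omega_{n-1}\,2^{n-1}\eta^{n-1}\,D,
\]
the last bound being where $\Omega_j\subset T_D(\Sigma^j_A)$ is used to cap the $\bar g_j$-radial extent of $\Omega_j$. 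Thus $\vol_{\bar g_j}(\Omega_j\setminus W_j)=O(\eta^{n-1}D)$ uniformly in $j$; without the tubular neighbourhood this would be unbounded, as the deep wells of Example~\ref{ex-deep-well} show. Combining with the outer estimate, $\vol_{\bar g_j}(\Omega_j)\to\vol_{g_{\mathbb E}}(B_0(\rho_A))$ and $\vol_{g_{\mathbb E}}(\Omega'_j)\to\vol_{g_{\mathbb E}}(B_0(\rho_A))$, so the additional term in $d_{\mathcal{VF}}$ tends to $0$.

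\emph{Assembly and the main obstacle.} Given $\epsilon>0$, choose $\eta$ small enough that the central-volume contributions are below $\epsilon/2$, then choose $\delta$ small enough that for $m_j<\delta$ the bi-Lipschitz distortion and boundary-area discrepancies on $W_j$ contribute below $\epsilon/2$; feeding the diffeomorphism of $W_j$, the central-volume bounds, the convergent boundary areas, and the depth bound $D$ into the Lakzian--Sormani estimate yields $d_{\mathcal F}<\epsilon/2$, hence \eqref{prop-main-thm-VF}. The crux is not any single estimate but uniformity in $j$: one must run Proposition~\ref{C0-outside} (hence the Section~\ref{sec5} stability inequality and the uniform asymptotics of Lemma~\ref{Unf-Asym-Flat for bar}) in tandem with the spherically symmetric monotonicity of $\rho$, and then verify that all hypotheses of the Lakzian--Sormani gluing theorem hold with constants independent of $j$ — in particular a lower volume bound ruling out collapse, control of the relevant boundary areas, and a bound on how far points of $\Omega_j$ lie from $W_j$. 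The structural point forcing this architecture is that no $C^0$ control of $u_j$ is possible near the centre (Example~\ref{ex-deep-well}), so splitting off a volumetrically negligible core and using the fixed tube $T_D$ cannot be avoided.
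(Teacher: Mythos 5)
This is correct and follows the paper's own proof very closely: an outer annulus on which Proposition~\ref{C0-outside} gives uniform bi-Lipschitz control, a central core of $\bar g_j$-volume $O(\eta^{n-1}D)$ by monotonicity, coarea, and the tube depth $D$, direct volume convergence for the extra $d_{\mathcal{VF}}$ term, and the Lakzian--Sormani estimate (Theorem~\ref{thm-subdiffeo}); the only cosmetic difference is that the paper cuts at a fixed $\bar g_j$-area $A_\epsilon$ (Lemmas~\ref{prop-subdiffeo-1}--\ref{lem-lambda}) while you cut at a fixed Euclidean radius $\eta$, which is interchangeable once the $C^0$ control is in hand. The one hypothesis of Theorem~\ref{thm-subdiffeo} you do not explicitly verify is the distance-distortion bound $\lambda=\sup_{x,y\in W_j}\lvert d_{\Omega_j,\bar g_j}(x,y)-d_{\Omega'_j,g_{\mathbb E}}(x,y)\rvert$ --- your closing checklist names a lower volume bound and ``how far points lie from $W_j$,'' neither of which is actually among that theorem's hypotheses --- but this is exactly Lemma~\ref{lem-lambda}, proved by a triangle-inequality detour through the small inner boundary sphere, and your setup supports the same argument verbatim with $\rho_{A_\epsilon}$ replaced by $\eta$.
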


To prove the volume preserving intrinsic flat convergence statement \eqref{prop-main-thm-VF},
we will show
they have diffeomorphic subregions $W_j$ and $W'_j$
where the metric tensors are $C^0$ close and
the volumes not covered by these subregions are small.  This will be stated more precisely below. Based on the examples of Section \ref{sec2}, we cannot expect the region near the
asymptotically cylindrical end of $(\Omega_j, \bar{g_j})$ to be $C^0$ close to $(\Omega'_j, g_{\mathbb E})$.
We will therefore choose an
\be
A_\epsilon =A_\epsilon(A, D) >0,
\ee
and then cut out the annular domain $\Omega^j_{A_\epsilon, A}$ from inside $\Omega_j$ and $\Omega'_j$
to create regions where we will have the appropriate control. Set
\be \label{W-j}
W_j=\Omega_{A_\epsilon, A}^j \,\,\subset\,\, \Omega_A^j \cap T_D(\Sigma_A^j) \,\,\subset\,\, \bar{M}_j^n,
\ee
and
\be \label{W'-j}
W'_j = \Omega_{A_\epsilon, A}^j \subset   {\mathbb{E}}^n.
\ee
The precise choice of $A_\epsilon$ will be made later in Subsection~\ref{proof-of-prop-main-thm-VF}.

\subsection{Estimates on $W_j$}

To begin the proof of Proposition~\ref{prop-main-thm-VFS} we apply the
results of the previous section to establish uniform closeness between $\bar{g}_j$ and $g_{\mathbb{E}}$ on diffeomorphic domains. This is a primary step
towards proving the intrinsic flat distance estimate \eqref{prop-main-thm-VF}.
Secondly we show the smooth convergence to a ball in $\mathbb{E}^n$ (\ref{Omega_j-smoothly}).

\begin{lem} \label{prop-subdiffeo-1}
Given any $\varepsilon>0$  and fixed
$A>A_\epsilon>0$ there exists a $\delta=\delta(\varepsilon, A_\epsilon)>0$ such that
\be
\bar{g}_j \le (1+\varepsilon)^2 {g_{\mathbb E}}\quad \textrm{ and }\quad
{g_{\mathbb E}} \le (1+\varepsilon)^2 \bar{g}_j\quad
\textrm{ on }\quad
\Omega^j_{A_\epsilon, A},
\ee
whenever the mass $m_j <\delta$.
\end{lem}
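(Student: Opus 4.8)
The plan is to leverage the fact that in spherical symmetry everything reduces to comparing the two radial metric coefficients, together with the Hölder control on the conformal factor $\log u_j$ obtained in Proposition~\ref{C0-outside}. Since $\tilde g_j = u_j^{4/(n-2)}\bar g_j = g_{\mathbb E}$, we have the pointwise identity
\be
\bar g_j = u_j^{-4/(n-2)} g_{\mathbb E},
\ee
so the whole statement is equivalent to the estimate $\big|\,u_j^{-4/(n-2)} - 1\,\big| \le (1+\varepsilon)^2 - 1$ (or its reciprocal analogue) on $\Omega^j_{A_\epsilon, A}$. Because $s \mapsto e^s$ is continuous, it suffices to show that $\sup_{\Omega^j_{A_\epsilon, A}} |\log u_j| \to 0$ as $m_j \to 0$, uniformly for the fixed inner area $A_\epsilon$.

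First I would identify the Euclidean annulus carrying the region $\Omega^j_{A_\epsilon, A}$. By Lemma~\ref{Monotonicity}, $\rho_j$ is increasing, so $\Omega^j_{A_\epsilon, A}$ sits, in the flat metric $g_{\mathbb E}$, inside the annulus $\tilde r \in [\tilde r_{A_\epsilon}, \tilde r_A]$ where $\tilde r_{A_\epsilon} = (A_\epsilon/\omega_{n-1})^{1/(n-1)}$ is a fixed positive number and $\tilde r_A$ is controlled because the $\tilde g_j$-area of $\Sigma^j_A$ is bounded (one needs here that $u_j$ is bounded above on the relevant range, which for the outer boundary follows from Proposition~\ref{C0-outside} with $\tilde A_0 = A_\epsilon$ once $m_j$ is small enough that $\tilde A_j = m_j^{-1/2}$ exceeds the outer area radius; for moderate $j$ this is the region $\tilde r \le \tilde r_A \le \tilde A_j$). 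The key point is that $\tilde r_{A_\epsilon}$ is a fixed positive constant depending only on $A_\epsilon$, which is exactly the hypothesis $\tilde r_1 \ge \tilde r_0$ needed in Proposition~\ref{C0-outside}.

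Then I would simply apply Proposition~\ref{C0-outside} with $\tilde A_0 := A_\epsilon$: for $m_j$ sufficiently small (so that $m_j < \tilde A_0^{-2}$), it gives
\be
\| \log u_j \|_{C^{0,1/2}\left(\Omega_{\tilde A_0, \tilde A_j}, g_{\mathbb E}\right)} \le \frac{C\, m_j^{1/4}}{\sqrt{\tilde A_0}},
\ee
and since $\Omega^j_{A_\epsilon, A} \subset \Omega_{\tilde A_0, \tilde A_j}$ (for $j$ large), we get $\sup |\log u_j| \le C m_j^{1/4}/\sqrt{A_\epsilon} \to 0$. Choosing $\delta = \delta(\varepsilon, A_\epsilon)$ so that $C\delta^{1/4}/\sqrt{A_\epsilon} < \log(1+\varepsilon)$ makes $|\log u_j| < \log(1+\varepsilon)$, hence $u_j^{-4/(n-2)} \le (1+\varepsilon)^{4/(n-2)} \le (1+\varepsilon)^2$ after noting that $4/(n-2) \le 2$ precisely for $n \ge 4$ — and for $n = 3$ one instead absorbs the exponent $4$ into the constant, i.e. replaces $\varepsilon$ by a smaller $\varepsilon'$ with $(1+\varepsilon')^4 \le (1+\varepsilon)^2$ before invoking the previous proposition. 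Both inequalities in the lemma follow symmetrically since $|\log u_j|$ small controls both $u_j$ and $u_j^{-1}$.

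The only genuine obstacle is the bookkeeping at the outer radius: to place $\Omega^j_{A_\epsilon, A}$ inside the domain where Proposition~\ref{C0-outside} applies, one must know that the flat area radius $\tilde r_A$ of the level set $\Sigma^j_A$ stays bounded, i.e. that $u_j$ does not blow up there. This is a mild circularity that I would break by first using the one-sided bound $u_j(r) \le \exp(c r^{-2(n-2)})$ of Lemma~\ref{asym-control-on-u-7890} together with the comparison $\tilde r \le r + c_3$ from \eqref{tttt} in the asymptotic end, which bounds $\tilde r_A$ in terms of the $\bar g_j$-radius of $\Sigma^j_A$; the latter is uniformly bounded by uniform asymptotic flatness and the monotonicity of $\rho_j$. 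Everything else is a direct quotation of the previous section's estimates, so the proof is short.
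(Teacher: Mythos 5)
Your overall strategy matches the paper's: both you and the authors reduce the lemma to showing that $\log u_j$ is uniformly small on the relevant annulus and then invoke Proposition~\ref{C0-outside}. However, the specific choice $\tilde{A}_0 := A_\epsilon$ creates a gap at the \emph{inner} boundary that your argument does not close.

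The containment $\Omega^j_{A_\epsilon, A}\subset\Omega_{\tilde{A}_0,\tilde{A}_j}$ requires the Euclidean area radius of $\Sigma^j_{A_\epsilon}$ to be at least $(\tilde{A}_0/\omega_{n-1})^{1/(n-1)}$. But $\Sigma^j_{A_\epsilon}$ is defined by its $\bar g_j$-area being $A_\epsilon$, while its \emph{Euclidean} area is $u_j^{2(n-1)/(n-2)}A_\epsilon$, which is \emph{less} than $A_\epsilon$ whenever $u_j<1$ on that sphere. Since $u_j$ decays to zero along the cylindrical end, there is no a priori reason that $u_j\ge1$ holds on $\Sigma^j_{A_\epsilon}$. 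So with $\tilde{A}_0=A_\epsilon$ the inner portion of $\Omega^j_{A_\epsilon, A}$ may fall strictly inside the Euclidean sphere of area $A_\epsilon$, escaping the domain where Proposition~\ref{C0-outside} gives you control. Your ``monotonicity of $\rho_j$'' assertion that the region sits in $[\tilde r_{A_\epsilon},\tilde r_A]$ conflates the $\bar g_j$-area radius with the flat one: monotonicity controls the former, not the latter. You carefully spot and break the analogous circularity at the \emph{outer} radius via Lemma~\ref{asym-control-on-u-7890} and \eqref{tttt}, but the same issue at the inner radius goes unaddressed.

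The fix is exactly what the paper does: choose $\tilde{A}_0$ \emph{strictly} less than $A_\epsilon$ (and similarly $\tilde A > A$ strictly). Then Proposition~\ref{C0-outside} controls $u_j$ near $1$ on the fixed Euclidean sphere of area $\tilde{A}_0$, so its $\bar g_j$-area is $u_j^{-2(n-1)/(n-2)}\tilde{A}_0\approx\tilde{A}_0<A_\epsilon$ for $m_j$ small; by monotonicity of the $\bar g_j$-area, $\Sigma^j_{A_\epsilon}$ sits outside this sphere in flat coordinates, and the containment follows. The rest of your argument (the $4/(n-2)$ versus $2$ exponent bookkeeping, the symmetry between $u_j$ and $u_j^{-1}$) is fine.
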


\begin{proof}
Choose $\tilde{A}_0<A_{\epsilon}$ and $\tilde{A}>A$. Then, according to Proposition \ref{C0-outside}, there is a uniform constant $C$ such that
\begin{equation}
\parallel u_j-1\parallel_{C^{0}
\left(\Omega_{\tilde{A}_{0},\tilde{A}}\right)}
\leq \frac{C m_j^{\frac{1}{4}}}{\sqrt{\tilde{A}_{0}}},
\end{equation}
where $\Omega_{\tilde{A}_0,\tilde{A}}\subset\mathbb{E}^n$ is the annular domain whose boundary consists of two coordinate spheres having areas $\tilde{A}_0<\tilde{A}$. Thus, for all $m_j$ sufficiently small we find that $\Omega_{A_{\epsilon},A}^j\subset \Omega_{\tilde{A}_0,\tilde{A}}$ and
\begin{equation}
(1+\varepsilon)^{-2}\leq u^{\frac{4}{n-2}}_j\leq (1+\varepsilon)^2,
\end{equation}
from which the desired result follows.
\end{proof}

\begin{lem} \label{lem-Omega_j-smoothly}
Fix $A>0$ and $D>0$, and let the symmetry spheres $\Sigma^j_A\subset \bar{M}_j^n$ be defined by
\be
\area_{g_j}(\Sigma^j_A)=\area_{\bar{g}_j}(\Sigma^j_A)=A.
\ee
If $m_j \to 0$ then
\be \label{area-delta-to-A}
\area_{g_{\mathbb E}}(\Sigma^j_A) \to A,
\ee
and so the spheres
\be\label{rt}
(\Sigma^j_A, {g_{\mathbb E}})\to (\partial B_0(\rho_A), {g_{\mathbb E}}) \textrm{ smoothly}
\ee
and the balls
\be \label{Omega_j-smoothly-2}
(\Omega'_j, {g_{\mathbb E}}) \to (B_0(\rho_A), {g_{\mathbb E}}) \textrm{ smoothly}.
\ee
\end{lem}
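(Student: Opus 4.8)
The plan is to exploit the conformal relation $g_{\mathbb E}=u_j^{4/(n-2)}\bar g_j$ together with the pointwise control on $u_j$ coming from Proposition~\ref{C0-outside}, and then to observe that once $\tilde g_j=g_{\mathbb E}$ the symmetry sphere and the region it bounds are literally a round Euclidean sphere and a centered Euclidean ball whose radii converge to $\rho_A$.

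First I would compute the $g_{\mathbb E}$-area of $\Sigma^j_A$. Since $u_j$ is spherically symmetric (Proposition~\ref{contran}) it is constant on $\Sigma^j_A$, and an $(n-1)$-dimensional area element transforms under $g_{\mathbb E}=u_j^{4/(n-2)}\bar g_j$ by the factor $u_j^{2(n-1)/(n-2)}$, so
\[
\area_{g_{\mathbb E}}(\Sigma^j_A)=u_j\big|_{\Sigma^j_A}^{\frac{2(n-1)}{n-2}}\,\area_{\bar g_j}(\Sigma^j_A)=u_j\big|_{\Sigma^j_A}^{\frac{2(n-1)}{n-2}}\,A .
\]
Thus \eqref{area-delta-to-A} reduces to $u_j|_{\Sigma^j_A}\to 1$. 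To get this, fix any $A_0$ with $0<A_0<A$. By Lemma~\ref{prop-subdiffeo-1}, for $m_j$ small the metrics $\bar g_j$ and $g_{\mathbb E}$ are mutually $(1+\varepsilon)^2$-comparable on $\Omega^j_{A_0,A}$, so the $g_{\mathbb E}$-area of $\Sigma^j_A$ is trapped between fixed multiples of $A$; hence $\Sigma^j_A$ lies inside a fixed Euclidean annulus $\Omega_{\tilde A_0,\tilde A}$ with $0<\tilde A_0<\tilde A$ independent of $j$, and for $j$ large $\tilde A<\tilde A_j=m_j^{-1/2}$. Proposition~\ref{C0-outside} then gives $\|\log u_j\|_{C^0(\Sigma^j_A)}\le Cm_j^{1/4}/\sqrt{\tilde A_0}\to 0$, whence $u_j|_{\Sigma^j_A}\to 1$ and $\area_{g_{\mathbb E}}(\Sigma^j_A)\to A$.

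For the smooth convergence \eqref{rt} and \eqref{Omega_j-smoothly-2} I would invoke Corollary~\ref{poiuy}: $(\tilde M^n_j,\tilde g_j)$ is isometric to $(\mathbb R^n,g_{\mathbb E})$, and under this isometry the center of symmetry (or the conformally closed cylindrical end) is carried to the origin and each symmetry sphere to a round sphere centered there. Hence $(\Sigma^j_A,g_{\mathbb E})$ is isometric to $\partial B_0(\rho^{\mathbb E}_{A,j})$ and $(\Omega'_j,g_{\mathbb E})$ to $B_0(\rho^{\mathbb E}_{A,j})$, where $\omega_{n-1}(\rho^{\mathbb E}_{A,j})^{n-1}=\area_{g_{\mathbb E}}(\Sigma^j_A)\to A=\omega_{n-1}\rho_A^{n-1}$, so $\rho^{\mathbb E}_{A,j}\to\rho_A$. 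Concentric Euclidean balls and spheres with radii converging to $\rho_A$ converge smoothly to $B_0(\rho_A)$ and $\partial B_0(\rho_A)$: e.g.\ the radial rescalings $\Psi_j(x)=(\rho^{\mathbb E}_{A,j}/\rho_A)\,x$ pull $g_{\mathbb E}$ back to $(\rho^{\mathbb E}_{A,j}/\rho_A)^2 g_{\mathbb E}$, which converges to $g_{\mathbb E}$ in $C^\infty$. This gives \eqref{rt} and \eqref{Omega_j-smoothly-2}.

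The only genuine obstacle is the first step — evaluating $u_j$ on $\Sigma^j_A$ — because $\Sigma^j_A$ is defined through the $\bar g_j$-area while Proposition~\ref{C0-outside} is phrased in the Euclidean radial coordinate $\tilde r$. This is exactly where the bootstrap via Lemma~\ref{prop-subdiffeo-1} (itself a consequence of Proposition~\ref{C0-outside}) is used: the $C^0$-closeness of $\bar g_j$ to $g_{\mathbb E}$ on a fixed annulus lets one locate the $\bar g_j$-defined sphere $\Sigma^j_A$ inside a fixed Euclidean annulus on which the Hölder estimate applies with a uniform constant. Everything else is elementary rescaling.
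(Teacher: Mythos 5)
Your proof is correct and takes essentially the same route as the paper, which simply cites the $C^0$ metric comparison of Lemma~\ref{prop-subdiffeo-1} to get \eqref{area-delta-to-A} and notes that \eqref{rt}, \eqref{Omega_j-smoothly-2} follow immediately since in Euclidean coordinates $\Sigma^j_A$ is a round sphere of radius converging to $\rho_A$. Your extra step of phrasing the area comparison through the conformal factor $u_j$ and re-invoking Proposition~\ref{C0-outside} is a harmless unpacking of what Lemma~\ref{prop-subdiffeo-1} already packages: once $\bar g_j$ and $g_{\mathbb E}$ are $(1+\varepsilon)^2$-comparable on a fixed annulus containing $\Sigma^j_A$, the $(n-1)$-dimensional areas differ by at most $(1+\varepsilon)^{n-1}$, and letting $\varepsilon\to0$ gives \eqref{area-delta-to-A} directly, without passing through $u_j$. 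Your rescaling argument for the smooth convergence of the concentric Euclidean balls and spheres is exactly the content the paper leaves implicit.
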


\begin{proof}
Observe that \eqref{area-delta-to-A} follows from the $C^0$ convergence
of the metric tensors in Lemma~\ref{prop-subdiffeo-1}. Moreover \eqref{rt} and \eqref{Omega_j-smoothly-2} follow immediately from \eqref{area-delta-to-A}.
\end{proof}

\subsection{Applying the Lakzian-Sormani Theorem}

In work of Lakzian and the third author \cite{Lakzian-Sormani},
the following theorem was proven providing a concrete means to estimate the intrinsic flat distance.
Intuitively this theorem observes that two manifolds $\Omega_j$ and $\Omega'_j$,
as in \eqref{Omega-j} and \eqref{Omega'-j},
are close in the intrinsic flat sense if
they have diffeomorphic subregions $W_j$ and $W'_j$, as in \eqref{W-j} and \eqref{W'-j} where the metric tensors are $C^0$ close and
the volumes not covered by these subregions are small.   See Figure~\ref{fig-Lakzian-Sormani}. When using this result for the current problem note that a dictionary between the notation is $\Omega=\Omega_j$, $\Omega'=\Omega'_j$, $W=W_j$, $W'=W'_j$, $g=\bar{g}_j$, and $g'=g_{\mathbb{E}}$.

 \begin{figure}[h] 
   \centering
   \includegraphics[width=5in]{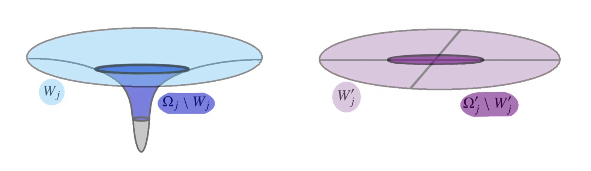}
   \caption{To prove that two regions $\Omega_j$ and $\Omega'_j$ are close in the intrinsic flat sense using the Lakzian-Sormani theorem, one first identifies subregions $W_j \subset \Omega_j$ and $W'_j\subset \Omega'_j$ that are $C^0$ close and then shows that the volumes of the excesses $\Omega_j\setminus W_j$ and $\Omega'_j\setminus W'_j$ are small. It must also be ensured that the distance distortions are small. See Theorem \ref{thm-subdiffeo}.}
   \label{fig-Lakzian-Sormani}
\end{figure}

\begin{thm} \label{thm-subdiffeo} {\em \cite{Lakzian-Sormani}}
Suppose that $(\Omega,g)$ and $(\Omega',g')$ are oriented
precompact Riemannian manifolds
with diffeomorphic subregions $W \subset \Omega$ and $W' \subset \Omega'$.
Identifying $W=W'$ assume that
\be \label{thm-subdiffeo-1}
g \le (1+\varepsilon)^2 g' \quad\textrm{ and }\quad
g' \le (1+\varepsilon)^2 g \quad\textrm { on }\quad W.
\ee
Taking the extrinsic diameters
\be \label{DU}
\max\{\diam_{g}(\Omega) , \diam_{g'} (\Omega')\}\le D_0
\ee
define a hemispherical width
\be \label{thm-subdiffeo-3}
\omega>\frac{\arccos(1+\varepsilon)^{-1} }{\pi}D_0 \quad\textrm{ which goes to } 0 \textrm{ as } \varepsilon \to 0.
\ee
Taking the difference in distances with respect to the outside manifolds,
set
\be \label{lambda}
\lambda=\sup_{x,y \in W}
|d_{\Omega, g}(x,y)-d_{\Omega', g'}(x,y)| \le 2D_0
\ee
and define the height
\be \label{thm-subdiffeo-5}
\Lambda=\,\max\left\{\sqrt{2\lambda D_0}, \,\, D_0\sqrt{\varepsilon^2 + 2\varepsilon}\right \}.
\ee
Then
\begin{align}\label{ifestimate}
\begin{split}
d_{\mathcal{F}}(\Omega, \Omega') \le &
\left(2\Lambda + \omega\right) \Big(
\vol_{g}(W)+\vol_{g'}(W')+\area_{g}(\partial W)+\area_{g'}(\partial W')\Big)\\
&+\vol_{g}(\Omega\setminus W)+\vol_{g'}(\Omega'\setminus W').
\end{split}
\end{align}
\end{thm}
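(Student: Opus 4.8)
The plan is to prove the intrinsic flat distance estimate \eqref{ifestimate} by constructing an explicit filling current and boundary-filling current between $\Omega$ and $\Omega'$, using the diffeomorphic subregions $W=W'$ as a bridge. First I would recall that the intrinsic flat distance is defined as an infimum of $\mass(B) + \mass(A)$ over integral currents $A, B$ in a common metric space $Z$ with $\bdry B + A = \iota_{\Omega\#}[\![\Omega]\!] - \iota_{\Omega'\#}[\![\Omega']\!]$, where $\iota_\Omega, \iota_{\Omega'}$ are isometric embeddings. So the whole proof reduces to exhibiting one good choice of $Z$, the embeddings, and the currents, then estimating masses.

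The key construction is to build $Z$ by gluing $\Omega$ and $\Omega'$ along a ``hemispherically bent'' copy of $W$. Concretely, on the common region $W=W'$ the two metrics $g$ and $g'$ satisfy the bi-Lipschitz bound \eqref{thm-subdiffeo-1}, so by a standard lemma (this is the technical core of \cite{Lakzian-Sormani}) one can embed both $(W,g)$ and $(W,g')$ into $W \times [-\Lambda/2,\Lambda/2] \times [\text{hemisphere of width } \omega]$ — the extra $[-\Lambda/2,\Lambda/2]$ factor absorbs the distance distortion coming from paths that leave $W$ (controlled by $\lambda$ via \eqref{lambda}), while the hemispherical factor of width $\omega$ from \eqref{thm-subdiffeo-3} absorbs the distortion of distances between points \emph{within} $W$ caused by the $\varepsilon$-bi-Lipschitz discrepancy; the height $\Lambda$ in \eqref{thm-subdiffeo-5} is exactly $\max\{\sqrt{2\lambda D_0}, D_0\sqrt{\varepsilon^2+2\varepsilon}\}$ so that both embeddings are isometric (distance-preserving) onto their images in $Z$. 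Then one attaches $\Omega\setminus W$ to one copy and $\Omega'\setminus W'$ to the other along $\bdry W$. The filling current $B$ is taken to be the ``prism'' $[\![W]\!] \times [\text{bent interval}]$ swept out between the two embedded copies of $W$, and $A$ is the excess: $\iota_\#[\![\Omega\setminus W]\!] - \iota'_\#[\![\Omega'\setminus W']\!]$ together with the thin ``sides'' of the prism over $\bdry W$. Checking $\bdry B + A = \iota_\#[\![\Omega]\!] - \iota'_\#[\![\Omega']\!]$ is a direct Stokes/boundary computation.

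Next I would estimate the masses. The mass of the prism $B$ is bounded by its ``thickness'' in the $Z$-direction — which is at most $2\Lambda+\omega$ — times the mass of its base, and the base involves $\vol_g(W)$, $\vol_{g'}(W')$, and the side contributions $\area_g(\bdry W)$, $\area_{g'}(\bdry W')$ (again weighted by the thickness); this gives the first line of \eqref{ifestimate}. The mass of $A$ splits as $\vol_g(\Omega\setminus W) + \vol_{g'}(\Omega'\setminus W')$ plus side terms already absorbed into the prism estimate, giving the second line. Throughout, the $\varepsilon$-bi-Lipschitz bound is used to pass freely between volumes/areas computed with $g$ and with $g'$ on $W$ up to harmless constants, and the extrinsic diameter bound $D_0$ from \eqref{DU} is what keeps the hemispherical and interval factors of bounded size.

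The main obstacle is the careful verification that the glued space $Z$ is genuinely a metric space in which both embeddings are \emph{isometric} (i.e.\ induced length metric equals the original), and in particular that the choices of $\omega$ and $\Lambda$ are large enough. This is where one must track how a geodesic in $\Omega$ between two points of $W$ that dips outside $W$ compares to the straight-in-$Z$ distance: the $\sqrt{2\lambda D_0}$ term in $\Lambda$ is precisely calibrated so that such excursions cannot create a shortcut, and the $\arccos((1+\varepsilon)^{-1})$ in \eqref{thm-subdiffeo-3} is what a spherical law-of-cosines computation forces. Once the isometric embedding claim is nailed down, the mass estimates are routine integral-geometric bookkeeping, and the theorem follows by taking this particular $(Z, B, A)$ as an admissible competitor in the infimum defining $d_{\mathcal F}$.
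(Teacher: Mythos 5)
The paper does not reprove this theorem; it cites it from Lakzian--Sormani \cite{Lakzian-Sormani}, so the comparison must be against the cited source. Your sketch captures the essential architecture of that proof: embed $\Omega$ and $\Omega'$ isometrically into a common metric space $Z$ built by gluing the two manifolds along an explicit ``bridge'' over the identified region $W$, take that bridge as the filling current $B$ and the excess regions $\Omega\setminus W$, $\Omega'\setminus W'$ as $A$, verify $\bdry B + A$ is the difference of pushforward fundamental classes, and estimate masses by (thickness of bridge) $\times$ (volume/area of base and sides). You also correctly identify that the genuine difficulty is showing the inclusions into $Z$ are isometric embeddings (length-metric preserving), and that $\omega$ and $\Lambda$ are calibrated by a spherical law-of-cosines and a no-shortcut estimate respectively, which is indeed the technical heart of the Lakzian--Sormani argument.

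Two details of your description of $Z$ are muddled, though they do not reflect a wrong approach so much as imprecise bookkeeping. First, the bridge in Lakzian--Sormani is an $(n+1)$-dimensional warped region of the form $W\times[\text{arc}]$ with a single one-dimensional fiber whose total length is what produces the factor $2\Lambda+\omega$; it is not a product $W\times[-\Lambda/2,\Lambda/2]\times[\text{hemisphere}]$, which would be $(n+2)$-dimensional and inconsistent with your own later description of $B$ as the prism $[\![W]\!]\times[\text{bent interval}]$. Second, the roles of $\omega$ and $\Lambda$ are not cleanly separated as you state: $\Lambda=\max\{\sqrt{2\lambda D_0},\, D_0\sqrt{\varepsilon^2+2\varepsilon}\}$ absorbs both the excursion distortion $\lambda$ and the $\varepsilon$ bi-Lipschitz distortion, not only the former. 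Once these are tightened, your outline matches the cited construction.
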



\subsection{Estimating the Volumes}

In this subsection all volumes and areas appearing in the first line of \eqref{ifestimate} will be shown to be uniformly bounded, and those in the second line will be shown to be arbitrarily small. In addition, it will be established that the volumes $\vol_{\bar{g}_j}(\Omega_j)$ converges to that of a ball in Euclidean space.

\begin{lem}\label{lem-vol-Omega-Not-W}
Volumes outside the diffeomorphic subregions may be estimated by
\be
\vol_{\bar{g}_j}(\Omega_j\setminus W_{j}) \le D A_\epsilon,
\ee
and
\be
\vol_{g_{\mathbb{E}}}(\Omega'_j\setminus W'_j) \le  (1+\varepsilon)^{n-1} D A_\epsilon.
\ee
In particular, both are less than $2^{n-1}DA_{\epsilon}$.
\end{lem}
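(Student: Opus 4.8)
The plan is to estimate both volumes using the coarea formula together with the area monotonicity established in Lemma~\ref{Monotonicity}. Recall that $\Omega_j = \Omega_A^j \cap T_D(\Sigma_A^j)$, $W_j = \Omega_{A_\epsilon, A}^j$, and hence $\Omega_j \setminus W_j$ is contained in the set of points within the level set $\Sigma_{A_\epsilon}^j$ that lie in the tubular neighborhood $T_D(\Sigma_A^j)$. The key geometric observation is that, with respect to the Jang metric $\bar g_j$, this region is foliated by symmetry spheres $\Sigma_{A'}^j$ with $A' \le A_\epsilon$, and each such sphere has area at most $A_\epsilon$ by the monotonicity of $\rho$ from Lemma~\ref{Monotonicity}; moreover the radial extent of the region is bounded by $D$ because every point lies within $\bar g_j$-distance $D$ of $\Sigma_A^j$.

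First I would write, using the arclength radial coordinate $s$ for $\bar g_j$ (so that $\bar g_j = ds^2 + \rho_j(s)^2 g_{S^{n-1}}$), that
\be
\vol_{\bar g_j}(\Omega_j \setminus W_j) \;\le\; \int_{I} \area_{\bar g_j}(\Sigma^j_{A(s)})\, ds,
\ee
where $I$ is the interval of $s$-values corresponding to $\Omega_j \setminus W_j$. Since the region is inside $\Sigma_{A_\epsilon}^j$, monotonicity gives $\area_{\bar g_j}(\Sigma^j_{A(s)}) = \omega_{n-1}\rho_j(s)^{n-1} \le A_\epsilon$ throughout $I$; and since the region lies in $T_D(\Sigma_A^j)$, the length of $I$ is at most $D$ (the shortest $\bar g_j$-path from such a point to $\Sigma_A^j$ is realized along the radial direction once we are inside the spherically symmetric region). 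Multiplying gives $\vol_{\bar g_j}(\Omega_j \setminus W_j) \le D A_\epsilon$.

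For the Euclidean estimate, $\Omega'_j \setminus W'_j$ is the Euclidean annular region $\Omega_{A'',A_\epsilon}^j$ for the appropriate inner area $A''$, whose $g_{\mathbb E}$-volume I would bound by comparing with $\bar g_j$ via Lemma~\ref{prop-subdiffeo-1}: on $\Omega_{A_\epsilon,A}^j$ we have $g_{\mathbb E} \le (1+\varepsilon)^2 \bar g_j$, hence volumes scale by at most $(1+\varepsilon)^{n}$; but on the sphere $\Sigma_{A_\epsilon}^j$ the areas agree up to $(1+\varepsilon)^{n-1}$ and the Euclidean radial width of $\Omega'_j \setminus W'_j$ is again at most $D$ after the comparison, so the same coarea argument gives $\vol_{g_{\mathbb E}}(\Omega'_j \setminus W'_j) \le (1+\varepsilon)^{n-1} D A_\epsilon$. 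Finally, since the statement is applied for $\varepsilon$ small (say $\varepsilon \le 1$), $(1+\varepsilon)^{n-1} \le 2^{n-1}$, so both quantities are at most $2^{n-1} D A_\epsilon$, as claimed.

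The main obstacle I anticipate is justifying carefully that the radial extent of $\Omega_j \setminus W_j$ in the $\bar g_j$-arclength coordinate is genuinely bounded by $D$ rather than something larger — one must be sure that $\bar g_j$-distance to $\Sigma_A^j$ from a point deep in the well is correctly computed along the radial geodesic and is not shortened by going around; this is where spherical symmetry of $\bar g_j$ (Corollary~\ref{bar-RotSym}) and the fact that $\Sigma_A^j$ is a symmetry sphere are essential. Everything else is a routine application of the coarea formula and the metric comparison from Lemma~\ref{prop-subdiffeo-1}.
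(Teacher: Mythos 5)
Your proof is correct and follows essentially the same route as the paper: identify $\Omega_j\setminus W_j=\Omega^j_{A_\epsilon}\cap T_D(\Sigma^j_A)$, bound the cross-sectional areas by $A_\epsilon$ via Lemma~\ref{Monotonicity}, bound the radial depth by $D$, and apply the coarea formula, then repeat in the Euclidean picture after transferring the area bound through Lemma~\ref{prop-subdiffeo-1}. The only cosmetic blemish is the brief detour toward a direct volume comparison on $\Omega^j_{A_\epsilon,A}$ (which is $W_j$, not the region being estimated) before correctly switching to comparing areas of the bounding sphere $\Sigma^j_{A_\epsilon}$; the paper goes straight to the boundary-area comparison.
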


\begin{proof}
First observe that
\be
\Omega_j\setminus W_{j} = \Omega^j_A \cap T_D(\Sigma_A^j) \setminus \Omega^j_{A_\epsilon,A}
= \Omega_{A_\epsilon}^j \cap T_D(\Sigma_A^j).
\ee
By Lemma~\ref{Monotonicity}, the largest area of the radial levels in this set is
$A_\epsilon$.  Applying the coarea formula to these levels and the fact that the
depth of the set is $D$, we have
\be
\vol_{\bar{g}_j}(\Omega_j\setminus W_{j}) \le D A_\epsilon.
\ee
Applying the coarea formula again and using \eqref{thm-subdiffeo-1} yields
\be
\vol_{g_{\mathbb E}}(\Omega'_j\setminus W'_j) \le D  \vol_{{g_{\mathbb E}}}(\Sigma_{A_\varepsilon}^j)
 \le D (1+\varepsilon)^{n-1} \vol_{\bar{g}}(\Sigma_{A_\epsilon}^j)=D (1+\varepsilon)^{n-1}A_\epsilon.
\ee
\end{proof}

\begin{lem}\label{lem-vols}
Volumes of the diffeomorphic subregions may be estimated by
\be
\vol_{\bar{g}_j}(W_{j}) \le D A,
\ee
and
\be
\vol_{g_{\mathbb E}}( W'_j) \le  (1+\varepsilon)^{n-1}DA.
\ee
In particular, both are less than $2^{n-1}DA$.
\end{lem}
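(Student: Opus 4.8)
The plan is to repeat, essentially verbatim, the argument of Lemma~\ref{lem-vol-Omega-Not-W}: $W_j=\Omega_{A_\epsilon,A}^j$ is a region of exactly the same type as the set $\Omega_j\setminus W_j=\Omega_{A_\epsilon}^j\cap T_D(\Sigma_A^j)$ treated there, the only difference being that its outermost radial level is now $\Sigma_A^j$, of area $A$, rather than $\Sigma_{A_\epsilon}^j$, of area $A_\epsilon$.

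\emph{The bound on $\vol_{\bar{g}_j}(W_j)$.} By \eqref{W-j} we have $W_j\subset\Omega_A^j\cap T_D(\Sigma_A^j)$. By Lemma~\ref{Monotonicity} the warping function $\rho_j$ is increasing (it is the common warping function of $g_j$ and $\bar{g}_j$, since the Jang deformation modifies only the radial part of the metric), so $W_j$ is foliated by the symmetry spheres $\Sigma_{\bar{s}}$ at $\bar{g}_j$-arclength $\bar{s}\in[0,\ell_j]$ measured inward from its outer boundary $\Sigma_A^j$, where $\ell_j=d_{\bar{g}_j}(\Sigma_{A_\epsilon}^j,\Sigma_A^j)\le D$ because $W_j$ lies in the $D$-tubular neighborhood of $\Sigma_A^j$. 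Since $\rho_j\le\rho_A$ on $W_j$, each such sphere has $\bar{g}_j$-area at most $\omega_{n-1}\rho_A^{n-1}=A$, so the coarea formula gives $\vol_{\bar{g}_j}(W_j)=\int_0^{\ell_j}\area_{\bar{g}_j}(\Sigma_{\bar{s}})\,d\bar{s}\le\ell_j A\le DA$.

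\emph{The bound on $\vol_{g_{\mathbb E}}(W_j')$.} Since the conformal factor $u_j$ is spherically symmetric (Proposition~\ref{contran}), $W_j'=\Omega_{A_\epsilon,A}^j\subset\mathbb{E}^n$ is a Euclidean annular domain bounded by the concentric spheres that are the images of $\Sigma_{A_\epsilon}^j$ and $\Sigma_A^j$. By Lemma~\ref{prop-subdiffeo-1}, valid once $m_j$ is small, we have $g_{\mathbb E}\le(1+\varepsilon)^2\bar{g}_j$ on this region, so every radial level has $g_{\mathbb E}$-area at most $(1+\varepsilon)^{n-1}$ times its $\bar{g}_j$-area, hence at most $(1+\varepsilon)^{n-1}A$; in particular $\area_{g_{\mathbb E}}(\Sigma_A^j)\le(1+\varepsilon)^{n-1}A$. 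Applying the coarea formula on $\mathbb{E}^n$ exactly as in the second half of the proof of Lemma~\ref{lem-vol-Omega-Not-W} --- bounding the Euclidean ball enclosed by the image of $\Sigma_A^j$ by $D$ times the $g_{\mathbb E}$-area of that sphere, which uses that $D$ exceeds the Euclidean area-radius $\rho_A$ --- we obtain $\vol_{g_{\mathbb E}}(W_j')\le D\,\area_{g_{\mathbb E}}(\Sigma_A^j)\le(1+\varepsilon)^{n-1}DA$. Finally, taking $\varepsilon<1$ we have $(1+\varepsilon)^{n-1}<2^{n-1}$ and $1<2^{n-1}$, so both volumes are less than $2^{n-1}DA$.

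I expect no genuine obstacle here: this is routine bookkeeping that parallels Lemma~\ref{lem-vol-Omega-Not-W} with $A$ in place of $A_\epsilon$. The one point that must be watched --- and which is already built into \eqref{W-j} --- is that the inclusion $W_j\subset T_D(\Sigma_A^j)$, and hence the bound $\ell_j\le D$, holds uniformly in $j$; this is ensured by the choice of $A_\epsilon$ fixed later in Subsection~\ref{proof-of-prop-main-thm-VF}.
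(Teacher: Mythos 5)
Your proof is correct and follows essentially the same route as the paper's: bound $\vol_{\bar{g}_j}(W_j)$ via Lemma~\ref{Monotonicity} and the coarea formula using the $D$-bound on radial depth coming from $W_j\subset T_D(\Sigma_A^j)$, then transfer to the Euclidean bound via the $C^0$ metric comparison of Lemma~\ref{prop-subdiffeo-1}. You are slightly more explicit than the paper about why the Euclidean coarea estimate produces the factor $D$ (namely that $D>\rho_A$ and $\varepsilon$ is eventually small enough that the Euclidean area-radius of $\Sigma_A^j$ is below $D$), a detail the paper leaves implicit; this is a welcome clarification but not a different argument.
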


\begin{proof}
This follows from Lemma~\ref{Monotonicity} and the coarea formula
\be
\vol_{\bar{g}_j}(W_{j}) \le \vol_{\bar{g}_j}(\Omega_{j}) \le D \vol_{\bar{g}}(\Sigma_A^j)=D A.
\ee
Furthermore
\be
\vol_{g_{\mathbb E}}( W'_j) \le D \vol_{{g_{\mathbb E}}}(\Sigma_A^j) =D (1+\varepsilon)^{n-1}A.
\ee
\end{proof}

\begin{lem}\label{lem-areas}
Boundary areas of the diffeomorphic subregions may be estimated by
\be
\area_{\bar{g}_j}(\partial W_{j}) \le A_\epsilon + A   \le   2A,
\ee
and
\be
\area_{g_{\mathbb E}}( \partial W'_j) \le (1+\varepsilon)^{n-1} (A_\epsilon + A) \le 2 (1+\varepsilon)^{n-1} A.
\ee
In particular, both are less than $2^{n}A$.
\end{lem}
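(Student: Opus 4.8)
The plan mirrors the proofs of Lemma~\ref{lem-vols} and Lemma~\ref{lem-vol-Omega-Not-W}: first I would identify the boundary hypersurfaces of $W_j$ and $W'_j$ and compute their areas with respect to $\bar{g}_j$ via the defining normalization of the symmetry spheres, and then transfer these areas to $g_{\mathbb E}$ using the ambient metric comparison of Lemma~\ref{prop-subdiffeo-1}.

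Since $0<A_\epsilon<A$, monotonicity of the warping function $\rho$ outside the outermost apparent horizon (Lemma~\ref{Monotonicity}) shows that for $m_j$ small enough the levels $\rho^{-1}(\rho_{A_\epsilon})$ and $\rho^{-1}(\rho_A)$ are each a single symmetry sphere, so that $\partial W_j=\Sigma^j_{A_\epsilon}\sqcup\Sigma^j_A$, and likewise $\partial W'_j=\Sigma^j_{A_\epsilon}\sqcup\Sigma^j_A$ since $W'_j$ is the same annulus $\Omega^j_{A_\epsilon,A}$ equipped with $g_{\mathbb E}$. By the defining property \eqref{Sigma-A} of these spheres with respect to $\bar{g}_j$ one gets immediately
\be
\area_{\bar{g}_j}(\partial W_j)=\area_{\bar{g}_j}(\Sigma^j_{A_\epsilon})+\area_{\bar{g}_j}(\Sigma^j_A)=A_\epsilon+A\le 2A.
\ee
For the Euclidean side, the point is that the ambient bound $g_{\mathbb E}\le(1+\varepsilon)^2\,\bar{g}_j$ on $\Omega^j_{A_\epsilon,A}$ from Lemma~\ref{prop-subdiffeo-1}, restricted to the $(n-1)$-dimensional tangent spaces of a symmetry sphere $\Sigma\subset\Omega^j_{A_\epsilon,A}$, forces the induced area forms to obey $dA_{g_{\mathbb E}}\le(1+\varepsilon)^{n-1}\,dA_{\bar{g}_j}$ on $\Sigma$ --- the exponent being $n-1$ rather than $n$ precisely because $\Sigma$ is a hypersurface (one checks this by diagonalizing $\bar{g}_j$ in a $\bar{g}_j$-orthonormal basis of $T\Sigma$). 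Integrating over $\partial W'_j$ and inserting the $\bar{g}_j$-areas just computed yields
\be
\area_{g_{\mathbb E}}(\partial W'_j)\le(1+\varepsilon)^{n-1}\big(\area_{\bar{g}_j}(\Sigma^j_{A_\epsilon})+\area_{\bar{g}_j}(\Sigma^j_A)\big)=(1+\varepsilon)^{n-1}(A_\epsilon+A)\le 2(1+\varepsilon)^{n-1}A.
\ee
Finally, taking $\varepsilon\le 1$ (harmless, since $\varepsilon$ is at our disposal through the choice of $\delta$) gives $(1+\varepsilon)^{n-1}\le 2^{n-1}$, so both areas are bounded by $2^nA$.

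I do not expect a genuine obstacle here; the statement is essentially a one-line consequence of Lemma~\ref{Monotonicity} and Lemma~\ref{prop-subdiffeo-1} together with the area normalization \eqref{Sigma-A} of the symmetry spheres, and it follows the same template as the two preceding volume estimates. The only point worth pausing on is the drop of the exponent from $n$ to $n-1$ when passing from the ambient $C^0$ comparison of metrics to the comparison of induced areas on the boundary spheres.
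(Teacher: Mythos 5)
Your proof is correct and follows essentially the same approach as the paper's own argument: identify $\partial W_j=\Sigma^j_{A_\epsilon}\sqcup\Sigma^j_A$, use the area normalization \eqref{Sigma-A} together with monotonicity (Lemma~\ref{Monotonicity}), and apply the $C^0$ metric comparison from Lemma~\ref{prop-subdiffeo-1} with the hypersurface exponent $n-1$ to pass to $g_{\mathbb E}$. The paper simply bounds both boundary levels by $A$ via monotonicity rather than using the exact value $A_\epsilon$ for the inner one, but this is an immaterial difference.
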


\begin{proof}
We know that $\partial W_j$ has at most two components, both of which are
radial levels of area less than $A$ by monotonicity.
The same holds for $\partial W'_j$, except that as above
the upper bound on the outer area is $(1+\varepsilon)^{n-1} A$.
\end{proof}

The next lemma will be used to obtain \textit{volume preserving} intrinsic flat convergence. It follows from the last few lemmas.

\begin{lem} \label{lem-vol-conv}
The difference of total volumes may be estimated by
\be
|\vol_{\bar{g}_j}( \Omega_j) - \vol_{g_{\mathbb E}} (\Omega'_j ) |
<  \left((1+\varepsilon)^n - 1\right) D A + 2D (1+\varepsilon)^{n-1} A_\epsilon.
\ee
\end{lem}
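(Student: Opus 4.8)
The plan is to decompose each region into the diffeomorphic core $W_j=W'_j=\Omega^j_{A_\epsilon,A}$, where Lemma~\ref{prop-subdiffeo-1} gives $C^0$-closeness of $\bar{g}_j$ and $g_{\mathbb E}$, plus a thin annular collar, and then to assemble the bound from the three volume estimates already in hand. Concretely, I would first use additivity of the Riemannian volume over the decompositions $\Omega_j=W_j\cup(\Omega_j\setminus W_j)$ and $\Omega'_j=W'_j\cup(\Omega'_j\setminus W'_j)$ (disjoint up to the measure-zero shared boundary spheres) to write
\[
\left|\vol_{\bar{g}_j}(\Omega_j)-\vol_{g_{\mathbb E}}(\Omega'_j)\right|
\le\left|\vol_{\bar{g}_j}(W_j)-\vol_{g_{\mathbb E}}(W'_j)\right|
+\vol_{\bar{g}_j}(\Omega_j\setminus W_j)+\vol_{g_{\mathbb E}}(\Omega'_j\setminus W'_j).
\]

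For the last two terms I would invoke Lemma~\ref{lem-vol-Omega-Not-W}, which yields $\vol_{\bar{g}_j}(\Omega_j\setminus W_j)\le DA_\epsilon$ and $\vol_{g_{\mathbb E}}(\Omega'_j\setminus W'_j)\le(1+\varepsilon)^{n-1}DA_\epsilon$; since $(1+\varepsilon)^{n-1}\ge 1$, their sum is at most $2(1+\varepsilon)^{n-1}DA_\epsilon$, which is precisely the second term in the asserted bound. For the core term I would identify $W_j\subset\bar{M}_j^n$ with $W'_j\subset\mathbb{E}^n$ via the radial diffeomorphism matching the common range $[\rho_{A_\epsilon},\rho_A]$ of area radii and apply Lemma~\ref{prop-subdiffeo-1} (shrinking $\delta$ so that $m_j$ is small enough) to get $\bar{g}_j\le(1+\varepsilon)^2 g_{\mathbb E}$ and $g_{\mathbb E}\le(1+\varepsilon)^2\bar{g}_j$ on $W_j$. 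Passing to volume forms ($n$-th powers) gives $\vol_{g_{\mathbb E}}(W'_j)\le(1+\varepsilon)^n\vol_{\bar{g}_j}(W_j)$ together with the reverse inequality, hence
\[
\left|\vol_{\bar{g}_j}(W_j)-\vol_{g_{\mathbb E}}(W'_j)\right|
\le\left((1+\varepsilon)^n-1\right)\min\left\{\vol_{\bar{g}_j}(W_j),\vol_{g_{\mathbb E}}(W'_j)\right\}
\le\left((1+\varepsilon)^n-1\right)DA,
\]
where the final step uses $\vol_{\bar{g}_j}(W_j)\le DA$ from Lemma~\ref{lem-vols}. Summing the two contributions reproduces exactly the claimed estimate, and the strict inequality comes from the strict monotonicity of Lemma~\ref{Monotonicity} feeding the coarea bounds of the two preceding lemmas.

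There is essentially no real obstacle here; the argument is bookkeeping built on Lemmas~\ref{lem-vol-Omega-Not-W}, \ref{lem-vols}, and \ref{prop-subdiffeo-1}. The only point requiring a line of care is that $W_j$ and $W'_j$ be identified by the \emph{correct} radial diffeomorphism — the one matching the common interval of area radii — so that the pointwise metric comparison of Lemma~\ref{prop-subdiffeo-1} genuinely holds on the identified region and the $n$-th-power volume comparison is legitimate.
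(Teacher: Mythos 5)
Your proof is correct and essentially the paper's argument: the paper's proof is precisely the three-term triangle inequality through $\vol_{\bar{g}_j}(W_j)$ and $\vol_{g_{\mathbb E}}(W'_j)$, applying Lemma~\ref{prop-subdiffeo-1} (raised to the $n$-th power for volume forms) on the identified core $W_j=W'_j$ and Lemma~\ref{lem-vol-Omega-Not-W} on the two collars, which is exactly your decomposition since $|\vol_{\bar{g}_j}(\Omega_j)-\vol_{\bar{g}_j}(W_j)|=\vol_{\bar{g}_j}(\Omega_j\setminus W_j)$ and likewise on the $g_{\mathbb E}$ side. One small misattribution: the strictness of the stated inequality does not come from the monotonicity of Lemma~\ref{Monotonicity}; it comes from absorbing the $\bar{g}_j$-collar bound $DA_\epsilon$ into $D(1+\varepsilon)^{n-1}A_\epsilon$ using $1<(1+\varepsilon)^{n-1}$, a step you already take implicitly when you write the collar sum as ``at most $2(1+\varepsilon)^{n-1}DA_\epsilon$.''
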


\begin{proof}
From Lemma~\ref{lem-vol-Omega-Not-W} we have
\be
|\vol_{g_{\mathbb E}}(\Omega'_j)-\vol_{g_{\mathbb E}}( W'_j) | \le D (1+\varepsilon)^{n-1} A_\epsilon,
\ee
and
\be
|\vol_{\bar{g}_j}(\Omega_j)- \vol_{\bar{g}_j}(W_j)|\le D A_\epsilon <D (1+\varepsilon)^{n-1} A_\epsilon.
\ee
Moreover by (\ref{thm-subdiffeo-1})
\be
|\vol_{g_{\mathbb E}}( W'_j)-\vol_{\bar{g}_j}(W_j)| \le \left((1+\varepsilon)^n-1\right) \vol_{\bar{g}_j}( W_j)
\le \left((1+\varepsilon)^n-1\right) D A.
\ee
\end{proof}

\subsection{Estimating Distances and Diameters}

\begin{lem}\label{lem-diam}
Let $D \ge \rho_A$, then
\be
\max\{\diam_{\bar{g}_j}(\Omega_j) , \diam_{{g_{\mathbb E}}}(\Omega'_j)\}\le D_0
\ee
where $D_0 \le 4\pi D$.
\end{lem}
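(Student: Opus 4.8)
The plan is to work in the radial arclength coordinate of each spherically symmetric metric and exhibit explicit short paths. Write $\bar g_j = d\bar s^2 + \rho(\bar s)^2 g_{S^{n-1}}$ and let $\bar s_A$ be the value of $\bar s$ on $\Sigma^j_A$, so that $\bar g_j$ restricted to $\Sigma^j_A$ is the round metric $\rho_A^2 g_{S^{n-1}}$, of intrinsic diameter $\pi\rho_A$. First I would record the elementary fact that, for a spherically symmetric metric, the distance of a point $p$ at coordinate $\bar s_p$ to $\Sigma^j_A$ equals $|\bar s_p-\bar s_A|$: the radial segment gives the upper bound, and any path $\gamma$ ending on $\Sigma^j_A$ has length at least $\int |\tfrac{d}{dt}(\bar s\circ\gamma)|\,dt \ge |\bar s_p-\bar s_A|$ since $|\dot\gamma|_{\bar g_j}\ge|\dot{\bar s}|$. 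Combined with the monotonicity of $\rho$ from Lemma~\ref{Monotonicity} (which gives $\Omega^j_A=\{\bar s\le\bar s_A\}$), this identifies $\Omega_j=\Omega^j_A\cap T_D(\Sigma^j_A)=\{\,\bar s_A-D<\bar s\le\bar s_A\,\}$; in particular it is the tubular neighbourhood that makes $\Omega_j$ bounded when the Jang deformation carries a cylindrical end.

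Next I would bound $\diam_{\bar g_j}(\Omega_j)$. Given $x,y\in\Omega_j$, concatenate the radial segment from $x$ out to $\Sigma^j_A$ (length $|\bar s_x-\bar s_A|<D$, and this segment lies in $\Omega_j$ by the coordinate description above), a minimizing arc inside the round sphere $\Sigma^j_A$ joining the foot of $x$ to the foot of $y$ (length $\le\pi\rho_A$, with $\Sigma^j_A\subset\Omega_j$), and the radial segment back down to $y$ (length $<D$). Hence $d_{\Omega_j,\bar g_j}(x,y)<2D+\pi\rho_A\le(2+\pi)D<4\pi D$, using the hypothesis $D\ge\rho_A$.

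On the Euclidean side, I would use the conformal identification $(\tilde M^n_j,\tilde g_j)\cong(\mathbb R^n,g_{\mathbb E})$ of Corollary~\ref{poiuy}, which carries $\Omega^j_A$ onto the Euclidean ball $B_0(\tilde\rho_A)$ with $\tilde\rho_A=(\area_{g_{\mathbb E}}(\Sigma^j_A)/\omega_{n-1})^{1/(n-1)}$; since $u_j$ is radial, hence constant on $\Sigma^j_A$, one has $\area_{g_{\mathbb E}}(\Sigma^j_A)=u_j|_{\Sigma^j_A}^{2(n-1)/(n-2)}A$. By Proposition~\ref{C0-outside} (applied with a fixed $\tilde A_0<A$; note $\Sigma^j_A\subset\Omega_{\tilde A_0,\tilde A_j}$ once $m_j$ is small, since $\tilde A_j=1/\sqrt{m_j}\to\infty$) the factor $u_j|_{\Sigma^j_A}\to1$, so for $m_j$ small enough $\area_{g_{\mathbb E}}(\Sigma^j_A)\le 2A$, i.e. $\tilde\rho_A\le 2^{1/(n-1)}\rho_A\le 2\rho_A\le 2D$, whence $\diam_{g_{\mathbb E}}(\Omega'_j)=2\tilde\rho_A\le 4D<4\pi D$. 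Taking $D_0:=4\pi D$ then yields $\max\{\diam_{\bar g_j}(\Omega_j),\diam_{g_{\mathbb E}}(\Omega'_j)\}\le D_0\le 4\pi D$.

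The only points that are not purely formal are (i) checking that the explicit path stays inside $\Omega_j$, which is settled by the coordinate description of $\Omega_j$, and (ii) the bound $\tilde\rho_A\le 2\rho_A$, which needs the conformal factor to be close to $1$ on $\Sigma^j_A$ and hence is only available once $m_j$ is small; this is harmless, as the lemma is invoked inside Proposition~\ref{prop-main-thm-VFS} precisely in that regime. I do not anticipate any serious obstacle.
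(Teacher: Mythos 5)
Your argument is correct and, for the $\bar g_j$--side, is essentially the one the paper gives: concatenate a radial segment from $x$ out to $\Sigma^j_A$ (length $<D$ by the tubular cutoff and the identity $d_{\bar g_j}(\cdot,\Sigma^j_A)=|\bar s-\bar s_A|$ that you spell out), a great--circle arc inside the round sphere $\Sigma^j_A$ of diameter $\pi\rho_A\le\pi D$, and a radial segment down to $y$, giving $2D+\pi D\le4\pi D$. The only real divergence is on the Euclidean side: the paper reruns the same radial--sphere--radial triangle inequality, estimating $\diam_{g_{\mathbb E}}(\Sigma^j_A)\le(1+\varepsilon)\pi D$ via Lemma~\ref{prop-subdiffeo-1}, whereas you simply observe that $\Omega'_j$ is the Euclidean ball $B_0(\tilde\rho_A)$, so $\diam_{g_{\mathbb E}}(\Omega'_j)=2\tilde\rho_A$, and then control $\tilde\rho_A\le 2\rho_A\le 2D$ via Proposition~\ref{C0-outside}. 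Your route is a bit cleaner and makes explicit the dependence on $m_j$ being small, which the paper leaves implicit in the $(1+\varepsilon)$ factor; both live in the same regime invoked inside Proposition~\ref{prop-main-thm-VFS}, so this is not a gap. One small remark: Theorem~\ref{thm-subdiffeo} requires the \emph{extrinsic} diameters in the ambient $(\bar M^n_j,\bar g_j)$ and $(\mathbb E^n,g_{\mathbb E})$; your concatenated paths bound the a priori larger intrinsic distances $d_{\Omega_j,\bar g_j}$, which is harmless but worth noting.
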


\begin{proof}
This follows because the depth of the tubular neighborhood is $D$,
and the largest symmetry sphere satisfies
\be
\diam_{\bar{g}_j}(\Sigma_A^j) = \pi \rho_A \le \pi D.
\ee
So by the triangle inequality the diameter of $\Omega_j$ is no larger than $ 2D + \pi D \le 4\pi D$.
In addition
\be
\diam_{{g_{\mathbb E}}}(\Sigma_A^j) \le (1+\varepsilon)\diam_{\bar{g}_j}(\Sigma_A^j)=(1+\varepsilon)\pi D.
 \ee
So by the triangle inequality the diameter of $\Omega'_j$ is no larger than $ 2D + (1+\varepsilon)\pi D \le 4\pi D$.
\end{proof}

\begin{lem}\label{lem-lambda}
The difference of distances satisfies
\be
\lambda_j=\sup_{x,y \in W_j}
|d_{\bar{g}_j}(x,y)-d_{{g_{\mathbb E}}}(x,y)| \,\,\le\,\,  (1+\varepsilon) \pi \rho_{A_\epsilon} + 4\pi \varepsilon D,
\ee
where $W_j$ is identified with $W'_j$.
\end{lem}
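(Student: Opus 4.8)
The plan is to compare the two length metrics $d_{\Omega_j,\bar g_j}$ and $d_{\Omega'_j,g_{\mathbb E}}$ on the annulus $W_j=\Omega^j_{A_\epsilon,A}$ by rerouting nearly minimizing paths so that they avoid the part of the ambient region lying inside the symmetry sphere $\Sigma^j_{A_\epsilon}$, where the $C^0$ comparison of Lemma~\ref{prop-subdiffeo-1} is unavailable. Fix $\varepsilon>0$ and take $m_j$ small enough (depending on $\varepsilon$ and $A_\epsilon$) that Lemma~\ref{prop-subdiffeo-1} gives $\bar g_j\le(1+\varepsilon)^2 g_{\mathbb E}$ and $g_{\mathbb E}\le(1+\varepsilon)^2\bar g_j$ on $W_j$, so that $\length_{\bar g_j}\le(1+\varepsilon)\length_{g_{\mathbb E}}$ and $\length_{g_{\mathbb E}}\le(1+\varepsilon)\length_{\bar g_j}$ for any path contained in $W_j$. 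First I would record three facts: the induced $\bar g_j$-metric on $\Sigma^j_{A_\epsilon}$ is round of radius $\rho_{A_\epsilon}$, so any two of its points are joined within it by a $\bar g_j$-arc of length at most $\pi\rho_{A_\epsilon}$; by monotonicity (Lemma~\ref{Monotonicity}) together with $W_j\subset\Omega_j$ from \eqref{W-j} we have $\{\rho_j\ge\rho_{A_\epsilon}\}\cap\Omega_j=\{\rho_j\ge\rho_{A_\epsilon}\}\cap\Omega'_j=W_j$ and $\Sigma^j_{A_\epsilon}\subset W_j$; and $d_{\bar g_j}(x,y)$, $d_{g_{\mathbb E}}(x,y)$ are each at most $D_0\le 4\pi D$ by Lemma~\ref{lem-diam} (using $D\ge\rho_A$).

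For one inequality, fix $x,y\in W_j$ and choose a path $\gamma$ from $x$ to $y$ in $(\Omega'_j,g_{\mathbb E})$ whose $g_{\mathbb E}$-length is within $\eta$ of $d_{g_{\mathbb E}}(x,y)$. If $\rho_j\circ\gamma\ge\rho_{A_\epsilon}$ throughout, then $\gamma\subset W_j$; otherwise let $p,q\in\Sigma^j_{A_\epsilon}$ be the first and last points along $\gamma$ where $\rho_j=\rho_{A_\epsilon}$, and replace $\gamma|_{[p,q]}$ by a $\bar g_j$-minimizing arc inside $\Sigma^j_{A_\epsilon}$. The resulting path $\sigma$ lies in $W_j\subset\Omega_j$, its two portions coming from $\gamma$ have total $g_{\mathbb E}$-length at most $\length_{g_{\mathbb E}}(\gamma)$, and the inserted arc has $\bar g_j$-length at most $\pi\rho_{A_\epsilon}$, so
\[
d_{\bar g_j}(x,y)\;\le\;\length_{\bar g_j}(\sigma)\;\le\;(1+\varepsilon)\big(d_{g_{\mathbb E}}(x,y)+\eta\big)+\pi\rho_{A_\epsilon}.
\]
Letting $\eta\to0$ gives $d_{\bar g_j}(x,y)-d_{g_{\mathbb E}}(x,y)\le\varepsilon\,d_{g_{\mathbb E}}(x,y)+\pi\rho_{A_\epsilon}\le 4\pi\varepsilon D+(1+\varepsilon)\pi\rho_{A_\epsilon}$. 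The reverse inequality is obtained the same way, starting from a nearly minimizing $\bar g_j$-path in $(\Omega_j,\bar g_j)$, rerouting the portion with $\rho_j<\rho_{A_\epsilon}$ through an arc in $\Sigma^j_{A_\epsilon}$ of $\bar g_j$-length at most $\pi\rho_{A_\epsilon}$ (hence $g_{\mathbb E}$-length at most $(1+\varepsilon)\pi\rho_{A_\epsilon}$), and converting the part of the path lying in $W_j$ from $\bar g_j$ to $g_{\mathbb E}$ at the cost of a factor $(1+\varepsilon)$; this gives $d_{g_{\mathbb E}}(x,y)-d_{\bar g_j}(x,y)\le\varepsilon\,d_{\bar g_j}(x,y)+(1+\varepsilon)\pi\rho_{A_\epsilon}\le 4\pi\varepsilon D+(1+\varepsilon)\pi\rho_{A_\epsilon}$. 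Taking the supremum over $x,y\in W_j$ completes the argument.

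The step I expect to be the main obstacle is the reverse inequality. Unlike $\Omega'_j$, the region $\Omega_j$ may contain a long collar coming from the cut-off asymptotically cylindrical end of the Jang deformation, so the portion of a path on which $\rho_j<\rho_{A_\epsilon}$ has no a priori length bound; the point is that starting from a (nearly) minimizing path forces the combined $\bar g_j$-length of its two maximal subarcs lying in $W_j$ to be at most $d_{\bar g_j}(x,y)$ no matter how long the excursion is, after which the excursion is simply traded for the short arc on $\Sigma^j_{A_\epsilon}$. A secondary point to watch is that each competitor path constructed above must genuinely remain inside the relevant ambient region — $\Omega_j$ in the first case and $\Omega'_j$ in the second — which is the reason for recording $\Sigma^j_{A_\epsilon}\subset W_j\subset\Omega_j\cap\Omega'_j$ at the outset.
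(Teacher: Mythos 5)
Your proposal is correct and uses essentially the same argument as the paper: both decompose a near-minimizing path at its first and last intersection with $\Sigma^j_{A_\epsilon}$, bound the middle excursion by the $\bar g_j$-intrinsic diameter $\pi\rho_{A_\epsilon}$ of that round sphere, convert the outer subarcs (which lie in $W_j$) between $\bar g_j$ and $g_{\mathbb E}$ using the $(1+\varepsilon)^2$ comparison from Lemma~\ref{prop-subdiffeo-1}, and then absorb the multiplicative error into $\varepsilon D_0 \le 4\pi\varepsilon D$ via Lemma~\ref{lem-diam}. The only cosmetic difference is that the paper phrases the decomposition via the triangle inequality on distances while you explicitly construct a rerouted competitor path and bound its length, and the paper uses exact minimizers (Euclidean line segment, $\bar g_j$-geodesic) where you carefully use near-minimizers with an $\eta$ that is sent to zero.
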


\begin{proof}
Let $\sigma=\sigma_{x,y}$ be a line segment from $\sigma(0)=x$ to $\sigma(1)=y$ in $\Omega'_j$ so that
\be
l_{{g_{\mathbb E}}} (\sigma)\,\,=\,\,|x-y|\,\,=\,\,d_{{g_{\mathbb E}}} (x,y).
\ee
Then there is a first time that $\sigma(t_1)\in \Sigma_{A_\epsilon}^j$
and a last time that $\sigma(t_2)\in \Sigma_{A_\epsilon}^j$.  See Figure~\ref{fig-lambda}.
\begin{figure}[h] 
\centering
\includegraphics[width=.3\textwidth]{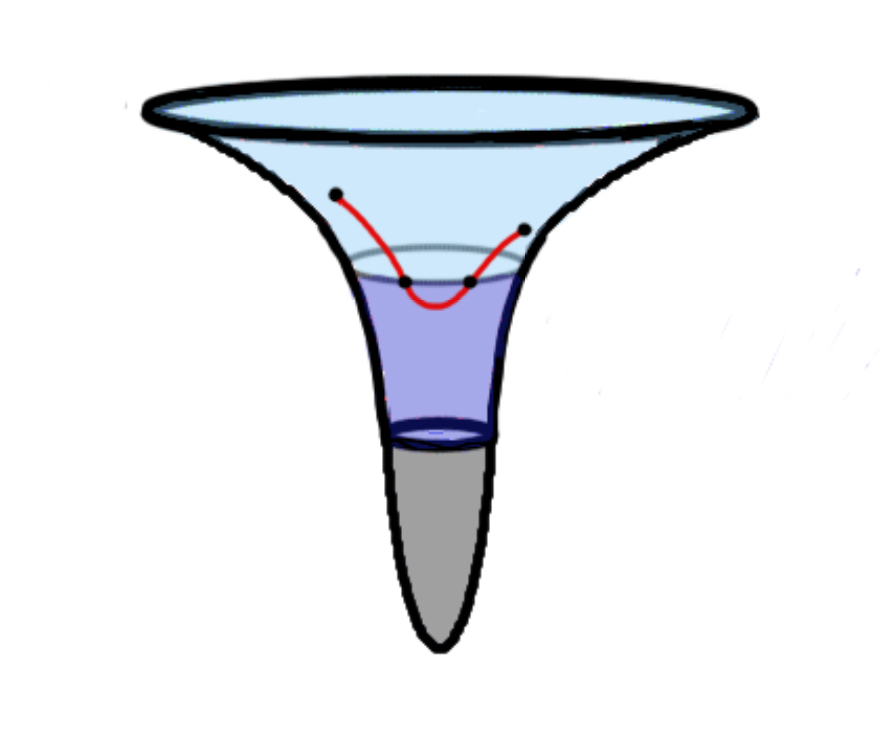}
\caption{
      }
   \label{fig-lambda}
\end{figure}
By triangle inequality we have
\begin{align}
\begin{split}
d_{\bar{g}_j}(x,y)\,\,\,\le & \,\,\,d_{\bar{g}_j}(\sigma(0), \sigma(t_1)) + d_{\bar{g}_j}(\sigma(t_1), \sigma(t_2)) +d_{\bar{g}}(\sigma(t_2), \sigma(1))  \\
\le &\,\,\, l_{\bar{g}_j}\left(\sigma([0,t_1])\right) +  \pi \rho_{A_\epsilon}   + l_{\bar{g}_j}\left(\sigma([t_2,1])\right)\\
\le &\,\,\, (1+\varepsilon)\, l_{{g_{\mathbb E}}} \left(\sigma([0,t_1])\right) +\pi \rho_{A_\epsilon}  +(1+\varepsilon)\, l_{{g_{\mathbb E}}}\left(\sigma([t_2,1])\right) \\
\le &\,\,\, \pi \rho_{A_\epsilon}   + (1+\varepsilon) l_{{g_{\mathbb E}}} \left(\sigma([0,1])\right)\\
= & \,\,\,\pi \rho_{A_\epsilon}   + (1+\varepsilon) d_{{g_{\mathbb E}}} (x,y)\\
\le & \,\,\,\pi \rho_{A_\epsilon}   + \varepsilon D_0 +  d_{{g_{\mathbb E}}} (x,y).
\end{split}
\end{align}
Here we have used Lemma \ref{lem-diam} and the fact that $\Sigma_{A_\epsilon}^j$ is connected.

Let $\gamma=\gamma_{x,y}$ be a $\bar{g}_j$-length minimizing geodesic from $\gamma(0)=x$ to $\gamma(1)=y$ in $\Omega_j$ so that
\be
l_{\bar{g}_j} (\gamma)=d_{\bar{g}_j} (x,y).
\ee
Then there is a first time that $\gamma(t_1)\in \Sigma_{A_\epsilon}^j$
and a last time that $\gamma(t_2)\in \Sigma_{A_\epsilon}^j$.
By the triangle inequality we have
\begin{align}
\begin{split}
\qquad d_{g_{\mathbb E}}(x,y)\,\,\,\le & \,\,\,d_{{g_{\mathbb E}}}(\gamma(0), \gamma(t_1)) + d_{{g_{\mathbb E}}}(\gamma(t_1), \gamma(t_2)) +d_{{g_{\mathbb E}}}(\gamma(t_2), \gamma(1))  \\
\le &\,\,\, l_{{g_{\mathbb E}}}\left(\gamma([0,t_1])\right) +  \diam_{g_{\mathbb E}}(\Sigma_{A_\epsilon}^j)   + l_{{g_{\mathbb E}}}\left(\gamma([t_2,1])\right)\\
\le &\,\,\,(1+\varepsilon) l_{\bar{g}_j}\left(\gamma([0,t_1])\right) + (1+\varepsilon)\diam_{\bar{g}_j}(\Sigma_{A_\epsilon}^j)
+ (1+\varepsilon) l_{\bar{g}_j}\left(\gamma([t_2,1])\right)\\
\le &\,\,\, (1+\varepsilon)\pi \rho_{A_\epsilon}   + (1+\varepsilon) l_{\bar{g}_j} \left(\gamma([0,1])\right)\\
= & \,\,\,(1+\varepsilon)\pi \rho_{A_\epsilon}   + (1+\varepsilon) d_{\bar{g}_j} (x,y)\\
\le &\,\,\, (1+\varepsilon)\pi \rho_{A_\epsilon}   + \varepsilon D_0 +  d_{\bar{g}_j} (x,y).
\end{split}
\end{align}
Combining these with $D_0 \le 4 \pi D$ from Lemma~\ref{lem-diam} we obtain the desired result.
\end{proof}

\subsection{Proof of Proposition~\ref{prop-main-thm-VFS}}
\label{proof-of-prop-main-thm-VF}

First note that by Lemmas~\ref{lem-vol-Omega-Not-W}-\ref{lem-diam} applied to Theorem~\ref{thm-subdiffeo} we have
\be
d_{\mathcal{F}}(\Omega_j, \Omega'_j) \le
\left(2\Lambda_j + \omega\right) \Big(2 D 2^{n-1}A + 2 A 2^n\Big) + 2 D 2^{n-1} A_\epsilon,
\ee
where
\begin{equation}
\Lambda_j=\,\max\left\{\sqrt{2\lambda_j D_0}, \,\, D_0\sqrt{\varepsilon^2 + 2\varepsilon}\right \},
\end{equation}
Furthermore by Lemma~\ref{lem-vol-conv}
\be
|\vol_{\bar{g}_j}( \Omega_j) - \vol_{g_{\mathbb E}} ( \Omega_j ) |
<  \left((1+\varepsilon)^n - 1\right) D A + D 2^{n} A_\epsilon.
\ee

Given $A>0$, $D>0$, and $\epsilon>0$ we must choose $\delta=\delta(A, D,\epsilon)>0$ sufficiently small so that
\be \label{what-we-want}
d_{\mathcal{VF}}(\Omega_j, \Omega'_j)\,\,=\,\,
d_{\mathcal{F}}(\Omega_j, \Omega'_j)+|\vol_{\bar{g}_j}( \Omega_j) - \vol_{g_{\mathbb E}} ( \Omega_j ) | <\epsilon
\ee
whenever $\delta<m_j$.  Therefore choose $\varepsilon$ and $A_\epsilon$ appropriately in order to satisfy
\be \label{var-1}
D 2^{n} A_\epsilon< \epsilon/4,
\ee
\be \label{var-2}
 \left((1+\varepsilon)^n - 1\right) D A <\epsilon/4,
\ee
and
\be \label{var-4}
\left(2\Lambda_j + \omega\right) C_{A,D} < \epsilon/4,
\ee
where
\be
C_{A,D}=(2 D 2^{n-1}A + 2 A 2^n ).
\ee
Observe that these together imply \eqref{what-we-want}. It remains only to show that \eqref{var-4} may be obtained from choosing $\varepsilon$ and $A_{\epsilon}$ small.

According to the definition of $\Lambda_j$ and $\omega$, \eqref{var-4} will be valid if
\be \label{var-4epsilon}
2D_0\sqrt{\varepsilon^2 + 2\varepsilon} C_{A,D}< \epsilon/8,
\ee
\be \label{var-4lambda}
2\sqrt{2\lambda_j D_0} C_{A,D}< \epsilon/8,
\ee
and
\be \label{var-4a}
\omega C_{A,D}=2\frac{\arccos(1+\varepsilon)^{-1} }{\pi}D_0 C_{A,D}< \epsilon/8.
\ee
Moreover, from Lemma~\ref{lem-lambda} it follows that \eqref{var-4lambda} holds if
\be\label{var-4lambda12}
2\sqrt{8 \pi \varepsilon D D_0} C_{A,D}< \epsilon/16,
\ee
and
\be\label{var-4lambda1}
2\sqrt{ 4 \pi \rho_{A_\epsilon}  D_0} C_{A,D}< \epsilon/16.
\ee
Thus we choose the area of the inner level set
$A_\epsilon=A_\epsilon(A,D)>0$ small enough so that \eqref{var-1}, and \eqref{var-4lambda1} hold.  Recall that $\rho_{A_\epsilon}$ is the radius of the $n-1$-sphere whose area is $A_\epsilon$.
In addition, we choose $\varepsilon=\varepsilon(A, D,\epsilon)$
small enough so that \eqref{var-2}, \eqref{var-4epsilon},
\eqref{var-4a}, and \eqref{var-4lambda12} hold.
It follows that there exists
$\delta=\delta(\varepsilon, A_\epsilon)>0$
such that $m_j < \delta$ implies
$d_{\mathcal{VF}}(\Omega_j, \Omega'_j)< \epsilon$.
Finally \eqref{Omega_j-smoothly} was established in Lemma~\ref{lem-Omega_j-smoothly}.

\section{Convergence of the Second Fundamental Form Differences}
\label{sec7}

In Theorem \ref{main-thm} a statement \eqref{main-thm-kL2} is given concerning the convergence of the difference of the second fundamental forms of the initial data and Jang graphs. In this section we give a proof of that statement.

\subsection{Outside Control}

The pointwise control on the conformal factor gives rise to $L^2$ control of the difference of second fundamental forms. Recall that $\Omega_{A}$ is the domain inside the symmetry sphere of volume $A$ in $(\bar{M}^n,\bar{g})$ and $(M^n,g)$.
By combining Proposition \ref{C0-outside} and \eqref{stability1} we obtain the following.

\begin{prop}\label{2ndFundFormOutside}
Let $(M^n_j,g_j,k_j)$ be a sequence of spherically symmetric, uniformly asymptotically flat initial data satisfying the dominant energy condition and
with either outermost apparent horizon boundary or no boundary.
For every $A_{0}>0$ there exists a uniform constant $C$ such that
\begin{equation}
\parallel h_j-k_j\parallel^2_{L^2(\bar{M}_j^n\setminus\Omega_{A_{0}}^j,\bar{g}_j)}
\,\,\leq \,\,C m_j.
\end{equation}
\end{prop}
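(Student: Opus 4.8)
The plan is to read the estimate off the stability inequality \eqref{stability1} and then remove the weight $u_j^2$ on the relevant region using the pointwise control of the conformal factor. Since $\tilde{g}_j=g_{\mathbb{E}}$ is flat (Corollary~\ref{poiuy}), the mass relation $\tilde{m}=\bar{m}+2\alpha$ of Proposition~\ref{contran} together with $\bar{m}_j=m_j$ forces $\alpha_j=-m_j/2$, so discarding from the right-hand side of \eqref{stability1} all of the nonnegative terms except $|h_j-k_j|^2_{\bar{g}_j}u_j^2$ leaves
\begin{equation}\label{hk-u-bound}
\int_{\bar{M}_j^n}|h_j-k_j|^2_{\bar{g}_j}\,u_j^2\,dV_{\bar{g}_j}\;\leq\;C_n\,m_j
\end{equation}
for a constant $C_n$ depending only on $n$. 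The statement then reduces to a uniform lower bound $u_j\geq c_0>0$ on $\bar{M}_j^n\setminus\Omega_{A_0}^j$ valid for all large $j$: granting this, \eqref{hk-u-bound} immediately yields $\parallel h_j-k_j\parallel^2_{L^2(\bar{M}_j^n\setminus\Omega_{A_0}^j,\bar{g}_j)}\leq c_0^{-2}C_n m_j$.

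For the lower bound, fix a Euclidean area value $\tilde{A}_0$ with $0<\tilde{A}_0<A_0$, say $\tilde{A}_0=A_0/4$, and let $\tilde{A}_j=m_j^{-1/2}$ as in Proposition~\ref{C0-outside}, which provides a uniform $C$ with $\parallel\log u_j\parallel_{C^0(\Omega_{\tilde{A}_0,\tilde{A}_j},g_{\mathbb{E}})}\leq Cm_j^{1/4}/\sqrt{\tilde{A}_0}\to 0$; hence $u_j\geq 1/2$ on the Euclidean annulus $\Omega_{\tilde{A}_0,\tilde{A}_j}$ once $m_j$ is small. By Lemma~\ref{lem-Omega_j-smoothly} the sphere $\Sigma_{A_0}^j$ has $g_{\mathbb{E}}$-area tending to $A_0$, so for $j$ large the set $\bar{M}_j^n\setminus\Omega_{A_0}^j$ is, in the flat coordinates of $\tilde{g}_j$, the complement of a ball of Euclidean area exceeding $\tilde{A}_0$; thus its portion of Euclidean area radius at most $\tilde{r}_j$ (the outer radius of $\Omega_{\tilde{A}_0,\tilde{A}_j}$) lies inside $\Omega_{\tilde{A}_0,\tilde{A}_j}$. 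On the remaining far end, where the Euclidean area radius exceeds $\tilde{r}_j\to\infty$, one has $r\geq\bar{r}$ in the asymptotic coordinate $r$ of $\bar{g}_j$ by the comparison $\tilde{r}\leq r+c_3$ of \eqref{tttt}; there the expansion $u_j=1+\alpha_j|x|^{2-n}+O(|x|^{1-n})$ from \eqref{conformalequation}, with $\alpha_j=-m_j/2\to 0$ and a remainder that is uniform in $j$ by the uniform asymptotic flatness of $\bar{g}_j$ (Lemma~\ref{Unf-Asym-Flat for bar}) and standard elliptic estimates, forces $u_j\geq 1/2$ once $\bar{r}$ is taken large and $m_j$ small. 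Combining the three regions gives $u_j\geq 1/2$ on $\bar{M}_j^n\setminus\Omega_{A_0}^j$ for every $j\geq j_0$.

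For the finitely many indices $j<j_0$, the conformal factor $u_j$ is a fixed positive smooth function that is bounded below by a positive constant on $\bar{M}_j^n\setminus\Omega_{A_0}^j$ (it is continuous and positive up to $\Sigma_{A_0}^j$ and tends to $1$ at spatial infinity), so the finitely many norms $\parallel h_j-k_j\parallel^2_{L^2(\bar{M}_j^n\setminus\Omega_{A_0}^j,\bar{g}_j)}$ are finite and can be absorbed by enlarging the constant. I expect the genuine difficulty to lie in the lower bound on $u_j$: reconciling the three radial descriptions (the $\bar{g}_j$-asymptotic coordinate $r$, the flat coordinate $\tilde{r}$, and the area radius $\rho_{A_0}$) with uniform constants, and above all controlling $u_j$ from below on the far asymptotic end, which lies beyond the range of Proposition~\ref{C0-outside} and so requires the uniform form of the asymptotic expansion of the conformal factor. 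The passage from \eqref{stability1} to \eqref{hk-u-bound} is routine.
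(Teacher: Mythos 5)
Your proposal follows the same route as the paper's proof: combine the stability inequality \eqref{stability1} (with $\alpha_j=-m_j/2$) to get $\int_{\bar M^n_j}|h_j-k_j|^2_{\bar g_j}u_j^2\,dV_{\bar g_j}\le C_n m_j$, and then obtain a uniform lower bound $u_j\ge c_0>0$ on $\bar M^n_j\setminus\Omega^j_{A_0}$ from Proposition~\ref{C0-outside} together with the asymptotics of $u_j$; the paper simply asserts the resulting uniform bound $|u_j-1|\le c_1m_j^{1/4}$ as its display \eqref{llllp} and then divides through. Your worry about the far asymptotic region is well-placed and is also implicit in the paper's terse appeal to ``the asymptotics of $u_j$'': Lemma~\ref{asym-control-on-u-7890} only furnishes the one-sided bound $u_j\le\exp(cr^{-2(n-2)})$, so the lower bound at the far end must be supplied separately — e.g.\ by running the integration-by-parts argument of that lemma on the exterior domain (picking up the boundary term at infinity, $(n-2)\omega_{n-1}m_j/2$, now controlled by the small mass), or, as you propose, by a uniform version of the asymptotic expansion of $u_j$ from \eqref{conformalequation} — but this is a detail shared by both proofs rather than a divergence between them.
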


\begin{proof}
According to Proposition \ref{C0-outside} and the asymptotics of $u_j$, we find by choosing $0<\tilde{A}_0<A_0$ that
\begin{equation}\label{llllp}
|u_j-1|\leq c_1 m_{j}^{\frac{1}{4}}\quad\quad\text{ on }\quad\quad \bar{M}^n_j\setminus\Omega^j_{A_0},
\end{equation}
for some uniform constant $c_1$. Thus for large $j$, the inequality \eqref{stability1} implies
\begin{equation}
c_{n}^{-1}(n-2)\omega_{n-1} m_j
\,\,\geq\,\,
\int_{\bar{M}_j^n}|h_j-k_j|_{\bar{g}_j}^2 u_j^2
dV_{\bar{g}_j}
\,\,\geq\,\,\tfrac{1}{2}\int_{\bar{M}_j^n\setminus\Omega_{A_{0}}^j}
|h_j-k_j|_{\bar{g}_j}^2 dV_{\bar{g}_j}.
\end{equation}
\end{proof}

\subsection{Inside Control}

The conformal factors $u_j$ decay exponentially fast along the cylindrical ends of $(\bar{M}^n_j,\bar{g}_j)$, and thus the techniques used above to obtain estimates outside a fixed level surface do not extend down the cylindrical end. Nevertheless, with an extra hypothesis of uniform $L^2$ control for the initial data second fundamental forms $k_j$, we are able to obtain uniform $L^2$ bounds for the difference of second fundamental forms on tubular neighborhoods of the anchor surface. It may be possible to weaken this hypothesis, however we expect that some condition on $k_j$ is necessary in order to control the difference of second fundamental forms.

\begin{prop}\label{L2 bound1}
Let $(M^n_j,g_j,k_j)$ be a sequence of spherically symmetric, uniformly asymptotically flat initial data satisfying the dominant energy condition and
with either outermost apparent horizon boundary or no boundary. Further, assume
the uniform bound $\parallel k_j\parallel _{L^2(M_j^n)}\leq B$. Given $A>0$ and $D>0$ there exists $C$ depending only on $A$, $B$, and $D$ such that
\begin{equation}
	\parallel h_j-k_j\parallel_{L^2(\Omega_A^j \cap T_{D}(\Sigma_A^j),\bar{g}_j)}
\,\,\leq\,\, C.
\end{equation}
\end{prop}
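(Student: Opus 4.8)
The plan is to extract the bound from the stability inequality \eqref{stability} for the Jang surface, using spherical symmetry to reduce everything to a fixed‑length arclength window. Write the Jang metric as $\bar g_j=d\bar s^2+\rho_j(\bar s)^2 g_{S^{n-1}}$; by Lemma~\ref{Monotonicity} (applied to $\bar g_j$, whose warping function is the same $\rho_j$ that appears for $g_j$) the area radius $\rho_j$ is increasing, the distance function to $\Sigma^j_A$ is $|\bar s-\bar s^j_A|$ where $\bar s^j_A$ is the arclength value of $\Sigma^j_A$, and hence $\Omega^j_A\cap T_D(\Sigma^j_A)=\{\bar s^j_A-D\le\bar s\le\bar s^j_A\}$, on which $\rho_j\le\rho_A$. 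Fix once and for all a profile $\chi\in C^\infty_c(\mathbb{R})$ with $0\le\chi\le1$, $\chi\equiv1$ on $[-D-1,1]$, $\spt\chi\subset[-D-2,2]$, and $|\chi'|\le C_0$, and set $\phi_j:=\chi(\bar s-\bar s^j_A)$. Because $\bar M^n_j$ has no boundary and its asymptotically cylindrical end (Corollary~\ref{bar-RotSym}) lies at infinite $\bar g_j$‑distance from $\Sigma^j_A$, while its asymptotically flat end is at $\bar s\to+\infty$, the function $\phi_j$ is smooth and compactly supported in $\bar M^n_j$, equals $1$ on $\Omega^j_A\cap T_D(\Sigma^j_A)$, has $|\nabla\phi_j|_{\bar g_j}=|\chi'(\bar s-\bar s^j_A)|\le C_0$, and (using $(\rho_j)_{\bar s}\le1$ below) is supported where $\rho_j\le\rho_A+2$.

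Next I would apply \eqref{stability} with $\phi=\phi_j$ and discard the nonnegative terms $16\pi(\mu_j-J_j(w_j))$ (dominant energy condition) and $|q_j|^2_{\bar g_j}$, giving
\[
\int_{\Omega^j_A\cap T_D(\Sigma^j_A)}|h_j-k_j|^2_{\bar g_j}\,dV_{\bar g_j}\ \le\ \int_{\bar M^n_j}\Big(c_n^{-1}|\nabla\phi_j|^2_{\bar g_j}+R_{\bar g_j}\,\phi_j^2\Big)\,dV_{\bar g_j}.
\]
The first term is at most $c_n^{-1}C_0^2\,\omega_{n-1}(\rho_A+2)^{n-1}\cdot|\spt\chi'|$, a constant depending only on $n,A,D$. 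For the second I would use the intrinsic warped‑product identity $R_{\bar g_j}=(n-1)\rho_j^{-2}\big[(n-2)(1-(\rho_j)_{\bar s}^2)-2\rho_j(\rho_j)_{\bar s\bar s}\big]$ together with the a priori pinch $0\le(\rho_j)_{\bar s}\le1$: the lower bound is the area monotonicity of Lemma~\ref{Monotonicity}, while the upper bound is precisely nonnegativity of the spacetime Hawking mass $\bar m_j=\tfrac12\rho_j^{n-2}\big(1-(\rho_j)_{\bar s}^2\big)$ under the dominant energy condition and the outermost‑horizon hypothesis, obtained by the monotonicity argument behind Theorem~\ref{Penrose}. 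Then $\int_{\bar M^n_j}R_{\bar g_j}\phi_j^2\,dV_{\bar g_j}=(n-1)\omega_{n-1}\int\phi_j^2\rho_j^{n-3}\big[(n-2)(1-(\rho_j)_{\bar s}^2)-2\rho_j(\rho_j)_{\bar s\bar s}\big]\,d\bar s$; integrating the $(\rho_j)_{\bar s\bar s}$‑term by parts (compact support of $\phi_j$ kills the boundary contributions) and collecting, this becomes a sum of integrals of $\phi_j^2\rho_j^{n-3}\big(1+(\rho_j)_{\bar s}^2\big)$ and of $\phi_j(\partial_{\bar s}\phi_j)\rho_j^{n-2}(\rho_j)_{\bar s}$, each bounded by $C(n,A,D)$ since $\rho_j\le\rho_A+2$ on $\spt\phi_j$, $0\le(\rho_j)_{\bar s}\le1$, $0\le\phi_j\le1$, $|\partial_{\bar s}\phi_j|\le C_0$, and $\spt\phi_j$ has arclength extent $\le D+4$. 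Combining the two estimates yields the claim.

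The uniform bound $\|k_j\|_{L^2(M^n_j)}\le B$ enters in an alternative, hands‑on argument and — more importantly — in the subsequent $L^p$ convergence statement (Theorem~\ref{thmhk}). For the alternative argument, spherical symmetry and the reduced Jang equation \eqref{sphjang} give $|h_j-k_j|^2_{\bar g_j}=n(n-1)(h_{t,j}-k_{t,j})^2$ with $h_{t,j}-k_{t,j}=\tfrac1{n-1}\big((1-v_j^2)k_{n,j}-\partial_{s_j}v_j\big)$ and $h_{n,j}=\partial_{s_j}v_j$; changing variables by $d\bar s_j=ds_j/\sqrt{1-v_j^2}$, the $k_{n,j}$‑part of $\int|h_j-k_j|^2_{\bar g_j}\,dV_{\bar g_j}$ is a multiple of $\int(1-v_j^2)^{3/2}k_{n,j}^2\rho_j^{n-1}\,ds_j\le\omega_{n-1}^{-1}\|k_j\|^2_{L^2(M^n_j,g_j)}\le\omega_{n-1}^{-1}B^2$, while the $\partial_{s_j}v_j$‑part is a multiple of $\int h_{n,j}^2\rho_j^{n-1}\,d\bar s_j$, for which one again leans on the spacetime Hawking mass. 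The genuine difficulty in any approach — and the reason Proposition~\ref{2ndFundFormOutside} cannot simply be pushed inward — is the asymptotically cylindrical end of $\bar M^n_j$: there $u_j$ decays exponentially and the conformal‑factor estimates are vacuous, so one must work with the intrinsic geometry of $\bar g_j$ on the bounded‑$\bar g_j$‑distance slab $T_D(\Sigma^j_A)$, and the pinch $(\rho_j)_{\bar s}\le1$ supplied by the spacetime Hawking mass, which keeps $\rho_j$ and $(\rho_j)_{\bar s}$ under control on that slab, is exactly what tames the otherwise dangerous curvature integral $\int R_{\bar g_j}\phi_j^2\,dV_{\bar g_j}$.
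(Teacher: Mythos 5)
Your overall strategy is aligned with the paper's: start from the stability inequality \eqref{stability} with a radial cutoff supported in a fixed-length arclength slab, discard the manifestly nonnegative terms, and then control $\int R_{\bar g_j}\phi_j^2\,dV_{\bar g_j}$ by writing $R_{\bar g_j}$ in warped-product form and integrating by parts. That much is correct and matches the paper. The gap is in how you then bound the resulting integrals. You claim the pointwise pinch $0\le(\rho_j)_{\bar s}\le1$, with the upper bound justified by ``nonnegativity of the spacetime Hawking mass $\bar m_j=\tfrac12\rho_j^{n-2}(1-(\rho_j)_{\bar s}^2)$ \ldots obtained by the monotonicity argument behind Theorem~\ref{Penrose}.'' This is a conflation of two different quantities. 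The expression $\tfrac12\rho_j^{n-2}(1-(\rho_j)_{\bar s}^2)$ is the \emph{Riemannian} Hawking mass of the Jang metric $\bar g_j=g_j+df_j^2$, and its nonnegativity is \emph{not} a consequence of the dominant energy condition: $R_{\bar g_j}$ is only nonnegative modulo a divergence term \eqref{Jangscalar}, and the monotonicity argument in the proof of Theorem~\ref{Penrose} is carried out for the \emph{generalized} Jang metric $g+\phi^2 df^2$ with the special warping factor $\phi=\rho_{\bar s}$, not for $\bar g_j$. In fact, writing $\rho_{\bar s}=\sqrt{1-v^2}\,\rho_s$ and using the genuine spacetime Hawking mass nonnegativity $\rho_s^2-\rho^2 k_t^2\le 1$ from \cite{Khuri}, one only gets $\rho_{\bar s}^2\le(1-v^2)(1+\rho^2 k_t^2)$, which need not be $\le 1$ (e.g.\ where $v$ is near $0$ and $k_t$ is large, which the Jang ODE \eqref{sphjang} does not preclude). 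So your main argument breaks exactly where the mean-curvature term $\int\phi_j^2\bar H_j^2\,dV_{\bar g_j}$ has to be estimated.

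This is also why your ``primary'' argument dispensing with the hypothesis $\|k_j\|_{L^2}\le B$ should have raised a flag: the paper explicitly remarks that some condition on $k_j$ is expected to be necessary. What actually closes the gap, and what the paper does, is to rewrite $\bar H_j=H_j/\sqrt{1+(f_j)_s^2}$ and split
\[
\int\phi_j^2\bar H_j^2\,dV_{\bar g_j}
=\int\phi_j^2\,\frac{H_j^2-(\operatorname{Tr}_{S}k_j)^2}{1+(f_j)_s^2}\,dV_{\bar g_j}
+\int\phi_j^2\,\frac{(\operatorname{Tr}_{S}k_j)^2}{1+(f_j)_s^2}\,dV_{\bar g_j}.
\]
The first integral is controlled by the nonnegativity of the spacetime Hawking mass of $(M^n_j,g_j,k_j)$ together with the uniform bound on $\rho_j$ on the slab; the second is bounded by $(n-1)^2\|k_j\|_{L^2(M^n_j)}^2\le(n-1)^2B^2$ after the change of variables $d\bar s=\sqrt{1+(f_j)_s^2}\,ds$. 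If you replace your unjustified pointwise pinch with this decomposition, your proof becomes the paper's proof; your ``alternative, hands-on'' sketch is gesturing in this direction, but as written it again defers the crucial $\int h_{n,j}^2\rho_j^{n-1}\,d\bar s_j$ estimate to the same unproved claim about the Hawking mass of $\bar g_j$.
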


\begin{proof}
For convenience in this proof we will drop the subscript $j$.
Recall from \eqref{stability} that
\begin{equation}
\int_{\bar{M}^n}|h-k|_{\bar{g}}^2\phi^2
dV_{\bar{g}}\,\,\leq\,\,
\int_{\bar{M}^n}\left(c_{n}^{-1}|\nabla\phi|_{\bar{g}}^2
+R_{\bar{g}}\phi^2\right)dV_{\bar{g}}
\end{equation}
for all $\phi\in C^{\infty}_{c}(\bar{M}^n)$. Choose a particular test function such that $0\leq \phi\leq 1$, $\phi\equiv 1$ on $T_{D+1}(\Sigma_A)$, and $|\nabla\phi|_{\bar{g}}\leq 2$ then
\begin{equation}
\int_{T_D(\Sigma_A)}|h-k|_{\bar{g}}^2
dV_{\bar{g}}
\,\,\leq\,\, 2c_n^{-1}\text{Vol}_{\bar{g}}(T_{D+1}(\Sigma_A))+
\int_{T_{D+1}(\Sigma_A)}R_{\bar{g}}\phi^2 dV_{\bar{g}}.
\end{equation}

Write
\begin{equation}
	\bar{g}=d\bar{s}^2+\rho(\bar{s})^2 g_{S^{n-1}}
\end{equation}
such that the interval $0<\bar{s}<D+2$ covers $T_{D+1}(\Sigma_A)$, and notice
that
\begin{equation}	 R_{\bar{g}}=-2(n-1)\frac{\ddot{\rho}}{\rho}+(n-1)(n-2)\frac{1-\dot{\rho}^2}{\rho^2}.
\end{equation}
Integrating by parts produces
\begin{align}\label{Sclr crvtr int}
\begin{split}
&\int_{T_{D+1}(\Sigma_A)} R_{\bar{g}}\phi^2 dV_{\bar{g}} \\
=&\,\,\omega_{n-1}\int_{0}^{D+2}
\left[-2(n-1)\frac{\ddot{\rho}}{\rho}+(n-1)(n-2)
\frac{1-\dot{\rho}^2}{\rho^2}\right]\phi^2\rho^{n-1}d\bar{s}\\
=&\,\,\omega_{n-1}\int_{0}^{D+2}\left[4(n-1)\phi\dot\phi\rho^{n-2}\dot\rho
+(n-1)(n-2)\phi^2\rho^{n-3}\dot\rho^2+(n-1)(n-2)\phi^2\rho^{n-3}\right]d\bar{s}.
\end{split}
\end{align}
Since the mean curvature of coordinate spheres is $\bar{H}=(n-1)\rho^{-1}\dot\rho$, this may be rewritten as
\begin{align}\label{qwew}
\begin{split}
\int_{T_{D+1}(\Sigma_A)}R_{\bar{g}}\phi^2 dV_{\bar{g}}
=&\int_{T_{D+1}(\Sigma_A)}\left(4\langle \phi\nabla\phi,\partial_{\bar{s}}\rangle
\bar{H}  +\frac{n-2}{n-1}\phi^2 \bar{H}^2 \right)dV_{\bar{g}}\\
&+\int_{0}^{D+2}\omega_{n-1}(n-1)(n-2)\phi^2 \rho^{n-3}d\bar{s}.
\end{split}
\end{align}

We now estimate each term of \eqref{qwew} separately. By uniform asymptotic flatness of $\bar{g}$ in Lemma \ref{Unf-Asym-Flat for bar} and asymptotic control \eqref{llllp} of $u$, we have that
\begin{equation}
\rho(D+2)=\left(\frac{A(D+2)}{\omega_{n-1}}\right)^{\frac{1}{n-1}}
\end{equation}
is uniformly bounded and thus
\begin{equation}
\int_{0}^{D+2}\phi^2 \rho^{n-3}d\bar{s}
\,\leq\,(D+2)\left(\frac{A(D+2)}{\omega_{n-1}}\right)^{\frac{n-3}{n-1}}
\,\leq\, c_1.
\end{equation}
Next according to Lemma \ref{lem-vol-conv} it holds that $\text{Vol}_{\bar{g}}(T_{D+1}(\Sigma))$ is uniformly bounded, and hence
by H\"{o}lder's inequality
\begin{equation}
\int_{T_{D+1}(\Sigma_A)}\langle \phi\nabla\phi,\partial_{\bar{s}}\rangle
\bar{H} dV_{\bar{g}}
\,\leq\, 2\text{Vol}_{\bar{g}}(T_{D+1}(\Sigma))^{\frac{1}{2}}
\left(\int_{T_{D+1}(\Sigma_A)}\bar{H}^2 dV_{\bar{g}}\right)^{\frac{1}{2}}
\leq\, c_2 \parallel \bar{H}\parallel_{L^2(T_{D+1}(\Sigma_A),\bar{g})}.
\end{equation}
The remaining term of \eqref{qwew} may also be estimated by the $L^2$ norm of $\bar{H}$.

To complete the proof we will use the extra hypothesis concerning $k$ to bound $\bar{H}$ in $L^2$. First observe that
\begin{equation}
\bar{g}=g+df^2=(1+f_s^2)ds^2 +\rho(s)^2 g_{S^{n-1}}\quad \Rightarrow
\quad \frac{d\bar{s}}{ds}=\sqrt{1+f_s^2},
\end{equation}
so that
\begin{equation}
\bar{H}=(n-1)\rho^{-1}\frac{d\rho}{d\bar{s}}
=(n-1)\rho^{-1}\frac{d\rho}{ds}\frac{ds}{d\bar{s}}=\frac{1}{\sqrt{1+f_s^2}}H
\end{equation}
where $H$ is the mean curvature with respect to $g$. Furthermore, let $\psi:T_{D+1}(\Sigma_A)\rightarrow M^n$ be the diffeomorphism onto its image associated with the coordinate change $\bar{s}\rightarrow s$. Then
\begin{align}\label{yyyy}
\begin{split}
\int_{T_{D+1}(\Sigma_A)}\bar{H}^2 dV_{\bar{g}}=&
\int_{T_{D+1}(\Sigma_A)}\left[\bar{H}^2
-\frac{(\text{Tr}_{S_{\bar{s}}}k)^2}{1+f_s^2}\right]dV_{\bar{g}}
+\int_{T_{D+1}(\Sigma_A)}\frac{(\text{Tr}_{S_{\bar{s}}}k)^2}{1+f_s^2} dV_{\bar{g}}\\
=&\int_{0}^{D+2}(1+f_s^{2})^{-1}\int_{S_{\bar{s}}}
\left[H^2-(\text{Tr}_{S_{\bar{s}}}k)^2\right]dA_{\bar{g}} d\bar{s}
+\int_{\psi[T_{D+1}(\Sigma_A)]}
\frac{(\text{Tr}_{S_{\bar{s}}}k)^2}{\sqrt{1+f_s^2}}dV_g\\
\leq &\int_{0}^{D+2}(n-2)^2\omega_{n-1}^{\frac{n-3}{n-2}}
A(\bar{s})^{\frac{n-3}{n-2}}d\bar{s}
+\int_{M^n}(n-1)^2 |k|_{g}^2 dV_g,
\end{split}
\end{align}
where we have used that the spacetime Hawking mass
\begin{equation}
m(s):=\frac{1}{2}\left(\frac{A(s)}{\omega_{n-1}}\right)^{\frac{n-2}{n-1}}
\left[1-\frac{1}{(n-1)^2 \omega_{n-1}^{\frac{2}{n-1}}A(s)^{\frac{n-3}{n-1}}}
\int_{S_{s}}\left(H^2-(\text{Tr}_{S_s}k)^2\right)dA_g\right]
\end{equation}
is nonnegative \cite{Khuri}, and the areas of coordinate spheres agree with respect to both metrics $g$ and $\bar{g}$, that is $A(s)=A(\bar{s})$. Again using that $\rho(D+2)$ is uniformly bounded produces
\begin{equation}
\int_{T_{D+1}(\Sigma_A)}\bar{H}^2 dV_{\bar{g}}\leq (D+2)(n-2)^2\omega_{n-1}^{\frac{n-3}{n-2}}c_3
+(n-1)^2\parallel k\parallel_{L^2(M^n)}^2,
\end{equation}
from which the desired result follows.
\end{proof}

\subsection{Global Control}

Here we show that the difference of second fundamental forms tends to zero in $L^p$, $1\leq p<2$ on the tubular neighborhoods
$\Omega_A \cap T_{D}(\Sigma_A)\subset \bar{M}^n$, when the masses go to zero. This result, and in fact even a uniform bound, cannot be extended to the entire domain $\Omega_A$. The reason is due to the cylindrical asymptotics contained within this domain, and the observation that in the limit along the cylinder $h-k$ converges to $II-i^* k$, where $II$ is the second fundamental form of $\partial M^n$ in $M^n$ and $i:\partial M^n\hookrightarrow M^n$ is inclusion. Since $II-i^* k$ does not necessarily vanish, it follows that $h-k$ is not necessarily in $L^p(\Omega_A)$.

\begin{thm}\label{thmhk}
Let $(M^n_j,g_j,k_j)$ be a sequence of spherically symmetric, uniformly asymptotically flat initial data satisfying the dominant energy condition and
with either outermost apparent horizon boundary or no boundary. Further, assume
the uniform bound $\parallel k_j\parallel _{L^2(M_j^n)}\leq B$ and that $m_j\rightarrow 0$. Then for any $1\leq p<2$, $A>0$, and $D>0$
\begin{equation}
	\parallel h_j-k_j\parallel_{L^p(\Omega_A^j \cap T_{D}(\Sigma_A^j),\bar{g}_j)}
\rightarrow 0.
\end{equation}
\end{thm}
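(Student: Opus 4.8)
The plan is to localize: the region $\Omega_A^j\cap T_D(\Sigma_A^j)$ splits into a thin inner tube around the cylindrical end, where we only control $\|h_j-k_j\|_{L^2}$ uniformly but where the volume is tiny, and an outer collar on which $\|h_j-k_j\|_{L^2}$ itself goes to zero; Hölder's inequality then glues the two estimates together, and it is precisely here that the strict inequality $p<2$ enters.

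First, fix $\epsilon>0$ and a parameter $A_0\in(0,A)$ to be chosen, and write
\[
\Omega_A^j\cap T_D(\Sigma_A^j)=\Big(\Omega_{A_0}^j\cap T_D(\Sigma_A^j)\Big)\ \cup\ \Big((\Omega_A^j\setminus\Omega_{A_0}^j)\cap T_D(\Sigma_A^j)\Big).
\]
On the inner piece $\Omega_{A_0}^j\cap T_D(\Sigma_A^j)$, Lemma~\ref{Monotonicity} bounds the areas of the radial level sets it contains by $A_0$, and its arclength depth is at most $D$, so the coarea formula gives (exactly as in the proof of Lemma~\ref{lem-vol-Omega-Not-W}) that $\vol_{\bar g_j}\big(\Omega_{A_0}^j\cap T_D(\Sigma_A^j)\big)\le DA_0$. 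Proposition~\ref{L2 bound1} provides a constant $C_1=C_1(A,B,D)$, independent of $j$, with $\|h_j-k_j\|_{L^2(\Omega_A^j\cap T_D(\Sigma_A^j),\bar g_j)}\le C_1$, hence the same bound holds on the subregion. By Hölder's inequality, for every $1\le p<2$,
\[
\|h_j-k_j\|_{L^p(\Omega_{A_0}^j\cap T_D(\Sigma_A^j),\bar g_j)}\ \le\ C_1\,\big(DA_0\big)^{\frac1p-\frac12}.
\]
Since $\tfrac1p-\tfrac12>0$, we now fix $A_0=A_0(\epsilon,A,B,D,p)\in(0,A)$ small enough that the right-hand side is at most $\epsilon/2$, uniformly in $j$.

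On the outer piece $(\Omega_A^j\setminus\Omega_{A_0}^j)\cap T_D(\Sigma_A^j)\subset\bar M_j^n\setminus\Omega_{A_0}^j$, Proposition~\ref{2ndFundFormOutside} applied with this now-fixed $A_0$ yields a constant $C_2=C_2(A_0)$ with $\|h_j-k_j\|_{L^2(\bar M_j^n\setminus\Omega_{A_0}^j,\bar g_j)}^2\le C_2 m_j$. The set has $\bar g_j$-volume at most $DA$ by Lemma~\ref{Monotonicity} and the coarea formula, so Hölder's inequality gives
\[
\|h_j-k_j\|_{L^p((\Omega_A^j\setminus\Omega_{A_0}^j)\cap T_D(\Sigma_A^j),\bar g_j)}\ \le\ \big(C_2 m_j\big)^{1/2}\,(DA)^{\frac1p-\frac12}\ \longrightarrow\ 0
\]
as $j\to\infty$, using $m_j\to0$. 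Thus there is $J$ so that this term is at most $\epsilon/2$ for $j\ge J$. Combining the two pieces, $\|h_j-k_j\|_{L^p(\Omega_A^j\cap T_D(\Sigma_A^j),\bar g_j)}\le\epsilon$ for all $j\ge J$, and since $\epsilon>0$ was arbitrary the theorem follows.

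The only genuinely delicate point is conceptual rather than computational: near the asymptotically cylindrical end Proposition~\ref{L2 bound1} furnishes only boundedness of the $L^2$ mass of $h_j-k_j$, not smallness — indeed, as noted just before the statement, $h_j-k_j$ need not even belong to $L^p$ of the full region $\Omega_A^j$ — so the argument must trade this bounded $L^2$ mass against the small volume $DA_0$ of the inner tube, which is possible exactly when $p<2$. Everything else is a routine two-$\epsilon$ splitting.
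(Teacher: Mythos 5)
Your proof is correct and follows essentially the same route as the paper: split $\Omega_A^j\cap T_D(\Sigma_A^j)$ at an inner level set of area $A_\varepsilon$ (your $A_0$), use the uniform $L^2$ bound of Proposition~\ref{L2 bound1} against the small volume $DA_\varepsilon$ on the inner tube, use Proposition~\ref{2ndFundFormOutside} on the outer annulus where $\|h_j-k_j\|_{L^2}\lesssim\sqrt{m_j}$, and glue with H\"older. The only cosmetic difference is that you phrase the conclusion as a two-$\epsilon$ argument whereas the paper collects everything into a single final estimate before letting $A_\varepsilon\to 0$ and $j\to\infty$.
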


\begin{proof}
Let $A_{\varepsilon}>0$ be fixed. Then by H\"{o}lder's inequality
\begin{align}
\begin{split}
&\int_{\Omega^j_{A}\cap T_{D}(\Sigma^j_A)} |h_j-k_j|_{\bar{g}_j}^p dV_{\bar{g}_j}\\
=&\,\,\int_{\left[\Omega^j_A\cap T_{D}(\Sigma^j_A)\right]\cap\Omega^j_{A_{\varepsilon}}}
|h_j-k_j|_{\bar{g}_j}^p dV_{\bar{g}_j}
+\int_{\left[\Omega^j_A\cap T_{D}(\Sigma^j_A)\right]\setminus\Omega^j_{A_{\varepsilon}}}|h_j-k_j|_{\bar{g}_j}^p dV_{\bar{g}_j}\\
\leq& \,\,\text{Vol}_{\bar{g}_j}\left(\left[\Omega^j_A\cap T_{D}(\Sigma^j_A)\right]\cap\Omega^j_{A_{\varepsilon}}\right)^{\frac{2-p}{2}}
\left(\int_{\Omega^j_A\cap T_{D}(\Sigma^j_A)}|h_j-k_j|^2_{\bar{g}_j} dV_{\bar{g}_j}\right)^{\frac{p}{2}}\\
&\,\,+\,\text{Vol}_{\bar{g}_j}\left(\left[\Omega^j_A\cap T_{D}(\Sigma^j_A)\right]\setminus\Omega^j_{A_{\varepsilon}}\right)^{\frac{2-p}{2}}
\left(\int_{\left[\Omega^j_A\cap T_{D}(\Sigma^j_A)\right]\setminus\Omega_{A_{\varepsilon}}^j}|h_j-k_j|^2_{\bar{g}_j} dV_{\bar{g}_j}\right)^{\frac{p}{2}}.
\end{split}
\end{align}
Using the notation and results of Lemma \ref{lem-vol-Omega-Not-W}, observe that
\begin{equation}
\text{Vol}_{\bar{g}_j}\left(\left[\Omega^j_A\cap T_{D}(\Sigma^j_A)\right]\cap\Omega^j_{A_{\varepsilon}}\right)=
\text{Vol}_{\bar{g}_j}(\Omega_j\setminus W_{j}) \leq D A_\varepsilon.
\end{equation}
Furthermore by Lemma \ref{lem-vol-conv}
\begin{equation}
\text{Vol}_{\bar{g}_j}\left(\left[\Omega^j_A\cap T_{D}(\Sigma^j_A)\right]\setminus\Omega^j_{A_{\varepsilon}}\right)
\leq\text{Vol}_{\bar{g}_j}(\Omega_j)\leq C_1,
\end{equation}
by Proposition \ref{2ndFundFormOutside}
\begin{equation}
\parallel h_j-k_j\parallel_{L^2(\bar{M}_j^n\setminus\Omega_{A_{\varepsilon}}^j,\bar{g}_j)}
\,\,\leq\, \sqrt{C_3(A_{\varepsilon}) m_j},
\end{equation}
and by Proposition \ref{L2 bound1}
\begin{equation}
	\parallel h_j-k_j\parallel_{L^2(\Omega_A^j \cap T_{D}(\Sigma_A^j),\bar{g}_j)}
\,\,\leq\, C_2.
\end{equation}

It follows that
\begin{equation}
\parallel h_j-k_j\parallel_{L^p(\Omega_A^j \cap T_{D}(\Sigma_A^j),\bar{g}_j)}^p
\,\,\,\leq\,\,\, (DA_{\varepsilon})^{\frac{2-p}{2}}C_2^p
+C_1^{\frac{2-p}{2}}(C_3(A_{\varepsilon}) m_j)^{\frac{p}{2}}.
\end{equation}
This can be made arbitrarily small by choosing $A_{\varepsilon}$ small and $j$ appropriately large.
\end{proof}

\section{Proof of Theorem~\ref{main-thm}}
\label{sec8}

The proof of the main theorem follows quickly from Proposition \ref{prop-main-thm-VFS} and Theorem \ref{thmhk}. Recall that
\be
\Omega_j \,\,=\,\,\Omega^j_A \,\cap\, T_D(\Sigma^j_A) \,\,\subset\,\, \bar{M}_j^n \textrm{ with metric tensor } \bar{g}_j,
\ee
and
\be
\Omega'_j\,\,= \,\, \Omega^j_A \,\,\subset\,\, {\mathbb{E}}^n \textrm{ with metric tensor } \tilde{g}_j=u^{\frac{4}{n-2}}_j \bar{g}_j=g_{\mathbb E}.
\ee
By the triangle inequality
\begin{equation}
d_{\mathcal{VF}}\left((\Omega_j,\bar{g}_j),(B_0(\rho_A),g_{\mathbb{E}})\right)
\,\,\leq\,\, d_{\mathcal{VF}}\left((\Omega_j,\bar{g}_j),(\Omega'_j,g_{\mathbb{E}})\right)
\,+\,d_{\mathcal{VF}}\left((\Omega'_j,g_{\mathbb{E}}),
(B_0(\rho_A),g_{\mathbb{E}})\right).
\end{equation}
As $m_j\rightarrow 0$ we have that \eqref{prop-main-thm-VF} produces
\begin{equation}
d_{\mathcal{VF}}\left((\Omega_j,\bar{g}_j),(\Omega'_j,g_{\mathbb{E}})\right)
\rightarrow 0,
\end{equation}
and \eqref{Omega_j-smoothly} yields
\begin{equation}
d_{\mathcal{VF}}\left((\Omega'_j,g_{\mathbb{E}}),
(B_0(\rho_A),g_{\mathbb{E}})\right)
\rightarrow 0,
\end{equation}
since smooth convergence implies convergence in the volume preserving intrinsic flat sense. Finally, Theorem \ref{thmhk} gives \eqref{main-thm-kL2}.


\begin{thebibliography}{99}

\bibitem{Allen} B. Allen, \textit{IMCF and the stability of the PMT and RPI under $L^2$ convergence}, Ann. Henri Poincar\'{e}, \textbf{19} (2018), no. 4, 1283-1306.

\bibitem{Andersson-Eichmair-Metzger} L. Andersson, M. Eichmair, and J. Metzger, \textit{Jang's equation and its applications to marginally trapped surfaces}, Complex analysis and dynamical systems IV., Part 2, 13-45,
    Contemp. Math., \textbf{554}, Israel Math. Conf. Proc., Amer. Math. Soc., Providence, RI, 2011.

\bibitem{AnderssonMarsSimon}  L. Andersson, M. Mars, and W. Simon, \textit{Stability of marginally outer trapped surfaces and existence of marginally outer trapped tubes}, Adv. Theor. Math. Phys., \textbf{12} (2008), no. 4, 853-888.

\bibitem{Andersson-Metzger} L. Andersson, and J. Metzger, \textit{The area of horizons and the trapped region}, Comm. Math. Phys., \textbf{290} (2009), 941-972.

\bibitem{Bartnik}  R. Bartnik, \textit{The mass of an asymptotically flat manifold}, Comm. Pure Appl. Math., \textbf{39} (1986), no. 5, 661-693.

\bibitem{BeigChrusciel}  R. Beig, and P. Chru\'{s}ciel, \textit{Killing vectors in asymptotically flat space-times. I. Asymptotically translational Killing vectors and the rigid positive energy theorem}, J. Math. Phys., \textbf{37} (1996), no. 4, 1939-1961.

\bibitem{Bray-KhuriDCDS} H. Bray, and M. Khuri, \textit{A Jang equation approach to the Penrose inequality}, Discrete Contin. Dyn. Syst., \textbf{27} (2010), no. 2, 741-766.

\bibitem{Bryden} E. Bryden, \textit{Stability of the positive mass theorem for axisymmetric manifolds}, preprint, 2018. arXiv:1806.02447

\bibitem{BrydenKhuriSokolowsky} E. Bryden, M. Khuri, and B. Sokolowsky, \textit{The positive mass theorem with angular momentum and charge for manifolds with boundary}, J. Math. Phys., \textbf{60} (2019), no. 5, 052501, 10 pp.


\bibitem{ChruscielMaerten}  P. Chru\'{s}ciel, and D. Maerten, \textit{Killing vectors in asymptotically flat space-times. II. Asymptotically translational Killing vectors and the rigid positive energy theorem in higher dimensions}, J. Math. Phys., \textbf{47} (2006), no. 2, 022502, 10 pp.


\bibitem{Eichmair1} M. Eichmair, \textit{Existence, regularity, and properties of generalized apparent horizons}, Comm. Math. Phys., \textbf{294} (2010), no. 3, 745-760.

\bibitem{Eichmair} M. Eichmair, \textit{The Jang equation reduction of the spacetime positive energy theorem in dimensions less than eight}, Comm. Math. Phys., \textbf{319} (2013), no. 3, 575-593.

\bibitem{EichmairHuangLeeSchoen} M. Eichmair, L.-H. Huang, D. Lee, and R. Schoen, \textit{The spacetime positive mass theorem in dimensions less than eight}, J. Eur. Math. Soc. (JEMS), \textbf{18} (2016), no. 1, 83-121.

\bibitem{HanKhuri} Q. Han, and M. Khuri, \textit{Existence and blow-up behavior for solutions of the generalized Jang equation}, Comm. Partial Differential Equations, \textbf{38} (2013), 2199-2237.

\bibitem{Hayward}  S. Hayward, \textit{Gravitational energy in spherical symmetry}, Phys. Rev. D, \textbf{53} (1996), 1938-1949.

\bibitem{Hidayet-et-al} A. Hidayet, F. Akbar, and B. Gunara, \textit{Higher dimensional Penrose inequality in spherically symmetric spacetime},  Chinese J. Phys., \textbf{54} (2016), no. 4, 582-586.

\bibitem{HuangLee} L.-H. Huang, and D. Lee, \textit{Equality in the spacetime positive mass theorem}, preprint, 2019. arXiv:1706.03732

\bibitem{HuangLeeSormani} L.-H. Huang, D. Lee, and C. Sormani, \textit{Intrinsic flat stability of the positive mass theorem for graphical hypersurfaces of Euclidean space}, J. Reine Angew. Math., \textbf{727} (2017), 269-299.

\bibitem{HuiskenIlmanen} G. Huisken, and T. Ilmanen, \textit{The inverse mean curvature flow and the Riemannian Penrose inequality},
J. Differential Geom., \textbf{59} (2001), 353-437.

\bibitem{Jauregui-Lee} J. Jauregui, and D. Lee, \textit{Lower semicontinuity of ADM mass under intrinsic flat convergence}, preprint, 2019.  arXiv:1903.00916

\bibitem{Khuri} M. Khuri, \textit{The Hoop Conjecture in spherically symmetric spacetimes}, Phys. Rev. D, \textbf{80} (2009), 124025.

\bibitem{Lakzian-Sormani} S. Lakzian, and C. Sormani, \textit{Smooth convergence away from singular sets}, Comm. Anal. Geom., \textbf{21} (2013), no. 1, 39-104.

\bibitem{LeeSormani1} D. Lee, and C. Sormani, \textit{Stability of the positive mass theorem for rotationally symmetric Riemannian manifolds},  J. Reine Angew. Math., \textbf{686} (2014), 187-220.

\bibitem{LeFloch-Sormani} P. LeFloch, and C. Sormani, \textit{Nonlinear stability of rotationally symmetric spaces with low regularity}, J. Funct. Anal., \textbf{268} (2015), no. 7, 2005-2065.

\bibitem{Omurchadha} E. Malec, and N. \'{O} Murchadha, \textit{The Jang equation, apparent horizons and the Penrose inequality}, Classical Quantum Gravity, \textbf{21} (2004), no. 24, 5777-5787.
    
\bibitem{Mars} M. Mars, \textit{Present status of the Penrose inequality}, Classical Quantum Gravity, \textbf{26} (2009), no. 19, 193001, 59 pp.

\bibitem{Metzger} J. Metzger, \textit{Blowup of Jang's equation at outermost marginally trapped surfaces}, Comm. Math. Phys., \textbf{294} (2010), no. 1, 61-72.

\bibitem{MisnerSharp}  C. Misner, and D. Sharp, \textit{Relativistic equations for adiabatic, spherically symmetric gravitational collapse}, Phys. Rev., \textbf{136} (1964), B571-B576.

\bibitem{PaetzSimon}  T. Paetz, and W. Simon, \textit{Marginally outer trapped surfaces in higher dimensions}, Classical Quantum Gravity, \textbf{30} (2013), no. 23, 235005, 14 pp.

\bibitem{ParkerTaubes}  T. Parker, and C. Taubes, \textit{On Witten's proof of the positive energy theorem}, Comm. Math. Phys., \textbf{84} (1982), no. 2, 223-238.

\bibitem{Petersen} Petersen, Peter. \textit{Riemannian Geometry}, New York, Springer, 2006.

\bibitem{Portegies-evalues} J. Portegies, \textit{Semicontinuity of eigenvalues under intrinsic flat convergence}, Calc. Var. PDE, \textbf{54} (2015), no. 2, 1725-1766.

\bibitem{SchoenYauI}  R. Schoen, and S.-T. Yau, \textit{On the proof of the positive mass conjecture in general relativity}, Comm. Math. Phys., \textbf{65} (1979), no. 1, 45-76.

\bibitem{SchoenYauII} R. Schoen, and S.-T. Yau, \textit{Proof of the positive mass theorem II}, Comm. Math. Phys., \textbf{79} (1981), 231-260.

\bibitem{SchoenYau} R. Schoen, and S.-T. Yau, \textit{Positive scalar curvature and minimal hypersurface singularities}, preprint, 2018. arXiv:1704.05490

\bibitem{Sormani-Scalar} C. Sormani, \textit{Measure theory in non-smooth spaces}, Nicola Gigli (Ed.), De Gruyter, Chapter 9, 2017, pp. 288-338.

\bibitem{SormaniStavrov} C. Sormani, and I. Stavrov, \textit{Geometrostatic manifolds of small ADM mass}, Comm. Pure Appl. Math., \textbf{72} (2019), no. 6, 1243-1287.

\bibitem{SW-JDG} C. Sormani, and S. Wenger, \textit{The intrinsic flat distance between Riemannian manifolds and integral current spaces}, J. Differential Geom., \textbf{87} (2011), no. 1, 117-199.

\bibitem{Wald} R. Wald, \textit{General Relativity}, University of Chicago Press, Chicago, 1984.

\bibitem{Witten} E. Witten, \textit{A new proof of the positive energy theorem}, Comm. Math. Phys., \textbf{80} (1981), 381-402.

\end{thebibliography}

\end{document}